\documentclass[twoside,a4paper,10pt]{amsart}
\usepackage{amsmath}
\usepackage{amsthm}
\usepackage[applemac]{inputenc} 
\usepackage[english]{babel}
\usepackage[all]{xy}
\usepackage{amssymb}
\usepackage{graphicx}
\usepackage{enumitem}
\usepackage{amsfonts}
\usepackage{array}
\usepackage{xcolor}
\usepackage{graphicx}
\usepackage[textsize=tiny]{todonotes}

\usepackage{listings}
\usepackage{hyperref}

\definecolor{codegreen}{rgb}{0,0.6,0}
\definecolor{codegray}{rgb}{0.5,0.5,0.5}
\definecolor{codepurple}{rgb}{0.58,0,0.82}
\definecolor{backcolour}{rgb}{0.95,0.95,0.92}

\lstdefinestyle{mystyle}{
    backgroundcolor=\color{backcolour},   
    commentstyle=\color{codegreen},
    keywordstyle=\color{magenta},
    numberstyle=\tiny\color{codegray},
    stringstyle=\color{codepurple},
    basicstyle=\ttfamily\footnotesize,
    breakatwhitespace=false,         
    breaklines=true,                 
    captionpos=b,                    
    keepspaces=true,                 
    numbers=left,                    
    numbersep=5pt,                  
    showspaces=false,                
    showstringspaces=false,
    showtabs=false,                  
    tabsize=2
}
\lstset{style=mystyle}
\lstset{language=[LaTeX]TeX}

\newcommand{\Z}{\mathbb Z}
\renewcommand{\P}{\mathbb P}
\newcommand{\PP}{\mathbb P}
\newcommand{\Q}{\mathbb Q}

\newcommand{\cD}{\mathcal D}

\newcommand\lra{\longrightarrow}
\newcommand\edps{md-pairs\ }
\newcommand\edp{md-pair\ }
\newcommand{\lef}[1]{\lfloor #1 \rfloor}
\newcommand{\rig}[1]{\lceil #1 \rceil}

\DeclareMathOperator{\ed}{e.\!d.}

\DeclareMathOperator{\codim}{codim}
\DeclareMathOperator{\HH}{H\hspace{0.5pt}}
\DeclareMathOperator{\AAA}{A}
\DeclareMathOperator{\DA}{A}
\DeclareMathOperator{\DB}{B}
\DeclareMathOperator{\DC}{C}
\DeclareMathOperator{\DD}{D}
\DeclareMathOperator{\DE}{E}
\DeclareMathOperator{\DF}{F}
\DeclareMathOperator{\DG}{G}

\DeclareMathOperator{\cox}{\hspace{0cm}h}
\newcolumntype{C}{>{$}c<{$}}

\newtheorem{theorem}{Theorem}[section]

\newtheorem{proposition}[theorem]{Proposition}
\newtheorem{corollary}[theorem]{Corollary}
\newtheorem{lemma}[theorem]{Lemma}

\theoremstyle{definition}
\newtheorem{remark}[theorem]{Remark}
\newtheorem{definition}[theorem]{Definition}
\newtheorem{notation}[theorem]{Notation}
\newtheorem{example}[theorem]{Example}

\begin{document}
\title[Maximal disjoint Schubert cycles]
{Maximal disjoint Schubert cycles in rational homogeneous varieties}

\author[Mu\~noz]{Roberto Mu\~noz}
\address{Departamento de Matem\'atica Aplicada a las TIC, ETSISI, Universidad
Polit\'ecnica de Madrid, 28031-Madrid, Spain}
\email{roberto.munoz@upm.es}
\author[Occhetta]{Gianluca Occhetta}
\address{Dipartimento di Matematica, Universit\`a di Trento, via
Sommarive 14 I-38123 Povo di Trento (TN), Italy} \thanks{Second and third author supported by the Department of Mathematics of the University of Trento.}
\email{gianluca.occhetta@unitn.it}
\author[Sol\'a Conde]{Luis E. Sol\'a Conde}
\address{Dipartimento di Matematica, Universit\`a di Trento, via
Sommarive 14 I-38123 Povo di Trento (TN), Italy}
\email{eduardo.solaconde@unitn.it}


\begin{abstract} 
In this paper we study properties of the Chow ring of rational homogeneous varieties of classical type, more concretely, effective zero divisors of low codimension, and a related invariant called effective good divisibility. This information is then used to study the question of (non)existence of nonconstant maps among these varieties, generalizing previous results for projective spaces and Grassmannians.
\end{abstract} 
\maketitle




\maketitle

\section{Introduction}

Throughout this paper, all the varieties will be projective and defined over the field of complex numbers. Given such a  smooth variety $M$ we will denote by
$$
\AAA^{\bullet}(M):=\bigoplus_{i=0}^{\dim(M)}\AAA^{i}(M)
$$
the Chow ring of $M$; the {\it effective good divisibility of} $M$ (denoted by $\ed(M)$) measures the minimum total codimension of effective zero divisors in this ring.
More concretely, saying that $\ed(M)=e$ is to say that we cannot find two nonzero effective cycles $x_i \in \AAA^{i}(M),$  $x_j \in \AAA^{j}(M)$ with zero intersection if $i+j\leq e$, whereas two such cycles exist when the sum of their codimensions is $i+j=e+1$ (see Definitions \ref{def:edupto}, \ref{def:ed}). This invariant was introduced in \cite{MOS6} as a refined version of the good divisibility introduced by Pan in \cite{Pan}.

As we will see, the effective good divisibility of a variety $M$ may help us to understand geometric properties of $M$, such as the (non)existence of nonconstant morphisms $\varphi: M \to M'$ from $M$ to certain rational homogeneous varieties $M'$ (see Section \ref{sec:applications}). A well-known elementary result in this direction is the trivial fact that morphisms $\varphi:\P^m\to \P^n$ with $m>n$ are constant. This can be thought of as a particular case of a result due to Tango (cf \cite{Tango1}) that there are no nonconstant maps from $\P^m$ to Grassmannians of linear subspaces of $\P^n$ if $m>n$. Recently, with the concept of effective good divisibility at hand, this statement has been extended to maps between Grassmannians in \cite{NO22}.

 In this paper we will deal, more generally, with the effective good divisibility of rational homogeneous varieties of classical type. In this case, the Chow ring is well-known, and the study of the $\ed$ relies on determining disjoint Schubert cycles of maximal dimension; our study shows that $\ed$ for rational homogeneous varieties is  related to the Coxeter number of the Lie algebra of the corresponding group of automorphisms. 
Let us recall that, given a simple Lie algebra with Dynkin diagram $\cD$ of classical type, the corresponding Coxeter number (see Definition \ref{def:coxnum}) can be read out of Table \ref{tab:cox}.


\begin{table}[h!]
\caption{Coxeter numbers of Dynkin diagram of classical type.}
\centering
\begin{tabular}{|C||C|C|C|C|}
\hline
\cD &
\DA_n & 
\DB_n & 
\DC_n & 
\DD_n \\
\hline
\cox(\cD)&
n+1&
2n&
2n&
2n-2\\
\hline
\end{tabular}
\label{tab:cox}
\end{table}

We may state now the main results of this paper. First, we study the effective good divisibility of complete flag manifolds of classical type.

\begin{theorem}\label{thm:1} The effective good divisibility of a complete flag manifold of classical type $\cD$ is $\cox(\cD)-1$. 
\end{theorem}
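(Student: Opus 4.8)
Set $M=G/B$ with $G$ of type $\cD$, let $W$ be the Weyl group with length function $\ell$, longest element $w_0$, and $N=\ell(w_0)=\dim M$ the number of positive roots. Recall that $\AAA^\bullet(M)$ has the basis of Schubert classes $\{\sigma_w\}_{w\in W}$ with $\sigma_w\in\AAA^{\ell(w)}(M)$; that $\sigma_{w_0w}$ is Poincar\'e dual to $\sigma_w$; and that it is standard that the effective cone of $\AAA^i(M)$ is generated by the $\sigma_w$ with $\ell(w)=i$, while the structure constants of the Schubert basis are nonnegative. Consequently the product of two nonzero effective classes is a nonnegative combination of Schubert classes, and it vanishes if and only if $\sigma_u\cdot\sigma_v=0$ for every Schubert class $\sigma_u$ occurring in the first factor and $\sigma_v$ in the second. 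Hence
\[
\ed(M)+1=\min\{\ell(u)+\ell(v)\ :\ u,v\in W,\ \sigma_u\cdot\sigma_v=0\},
\]
and the theorem amounts to showing this minimum equals $\cox(\cD)$.

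I would next record the combinatorial vanishing criterion. Representing $\sigma_u$ by an opposite Schubert variety and $\sigma_v$ by a Schubert variety (using $\sigma_v=[X_{w_0v}]$), moving one of them into general position, and invoking the basic structure of Richardson varieties (nonempty, reduced, and of the expected dimension exactly when the indices are suitably Bruhat comparable), one obtains
\[
\sigma_u\cdot\sigma_v\neq 0\iff u\le w_0v\ \text{in the Bruhat order.}
\]
Writing $z=w_0v$, so $\ell(z)=N-\ell(v)$, this turns the problem into one entirely inside $W$:
\[
\ed(M)+1=N-\max\{\ell(z)-\ell(u)\ :\ u,z\in W,\ u\not\le z\},
\]
so it remains to prove $\max\{\ell(z)-\ell(u):u\not\le z\}=N-\cox(\cD)$. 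Conceptually this is where the Coxeter number should enter, since $\cox(\cD)-1$ is the height of the highest root of $\cD$.

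For $\ed(M)\le\cox(\cD)-1$ it suffices to exhibit one vanishing product of total codimension $\cox(\cD)$. In types $A_n$, $B_n$, $C_n$ let $i_0$ be the endpoint node with $\dim(G/P_{i_0})$ minimal, so that $G/P_{i_0}$ is the defining projective space ($A_n$, $C_n$) or the defining quadric ($B_n$), of dimension $\cox(\cD)-1$; then for the projection $\pi\colon M\to G/P_{i_0}$ the relation $\pi^{*}\cO_{G/P_{i_0}}(1)\cdot[\pi^{-1}(\mathrm{pt})]=0$ does the job, since $[\pi^{-1}(\mathrm{pt})]$ has codimension $\cox(\cD)-1$. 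Equivalently, $u=s_{i_0}$ and $z=w_{0,J}$ with $J=\{1,\dots,n\}\setminus\{i_0\}$ satisfy $u\not\le z$ (as $i_0$ is not in the support of $z$) and $\ell(z)-\ell(u)=N-\cox(\cD)$. In type $D_n$ every maximal parabolic has $\dim(G/P_i)\ge\cox(\cD)$, so no divisor class can occur in a vanishing product of total codimension $\cox(\cD)$: indeed a support argument shows $\sigma_{s_i}\cdot\sigma_v=0$ forces $\ell(v)\ge\dim(G/P_i)\ge\cox(\cD)$. Here I would instead produce a vanishing product of two Schubert classes each of codimension at least $2$ and of total codimension $\cox(\cD)$, coming from the Schubert geometry of the two families of maximal isotropic subspaces (the two spinor projections of $M$), and verify Bruhat non-comparability of the two even-signed permutations directly.

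The opposite inequality $\ed(M)\ge\cox(\cD)-1$ — equivalently, $\sigma_u\cdot\sigma_v\neq 0$ whenever $\ell(u)+\ell(v)<\cox(\cD)$, i.e.\ $u\le w_0v$ whenever $\ell(w_0v)-\ell(u)>N-\cox(\cD)$ — is the heart of the matter, and I expect it to require a type-by-type argument with the standard combinatorial models: permutations for $A_n$, signed permutations for $B_n$, $C_n$, and even-signed permutations for $D_n$, together with the entrywise (tableau) criterion for the Bruhat order. The idea is that an element $u$ of length at most $\cox(\cD)-2$ displaces the entries of a (signed) permutation only by a controlled amount, whereas $z=w_0v$, being within $N-\cox(\cD)$ of $w_0$ in length, still dominates $u$ entrywise; turning this into a proof, and in particular coping with the forked nodes and the parity conditions of type $D_n$ (which is also the source of the exceptional behavior in the upper bound), is the main obstacle. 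Carried through, this analysis simultaneously pins down the maximal disjoint pairs of Schubert cycles alluded to in the title.
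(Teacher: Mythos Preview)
Your reduction to the Bruhat order is correct and matches the paper's Lemma~\ref{lem:toprove}, and your upper bound for $\DA_n,\DB_n,\DC_n$ (a divisor times a fiber of the projection to the smallest $G/P$) is exactly the paper's. But the proposal is not a proof: for the $\DD_n$ upper bound you only promise to ``produce a vanishing product\ldots and verify Bruhat non-comparability directly'' without doing so, and for the lower bound in every type you explicitly flag the argument as ``the main obstacle'' and leave it open. Those are the two substantive steps of the theorem, so what you have is a plan, not a proof.

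For comparison, the paper handles the $\DD_n$ upper bound via the quadric $\DD_n(1)$ rather than the spinor projections: two disjoint $(n-1)$-planes in $Q^{2n-2}$ pull back to an md-pair of total codimension $2(n-1)=\cox(\DD_n)$ in $G/B$ (Example~\ref{ex:dnsp}, Corollary~\ref{cor:edle}). For the lower bound the paper does \emph{not} use (signed) permutation models and tableau criteria as you propose, but an induction on the rank. Setting $J=\Delta\setminus\{1\}$ and decomposing $v=v^Jv_J$, $u=u^Ju_J$, one uses Stumbo's explicit reduced expressions to see that $W^J$ is essentially a chain (in type $\DD_n$, two chains glued in the middle at $w_\alpha,w_\beta$), so $v^J\le u^J$ is forced by the length hypothesis alone (Lemma~\ref{lem:strem}, Corollary~\ref{lem:J}); the remaining inequality $v_J\le u_J$ follows from the inductive hypothesis on $\cD_J$, since $\cox(\cD_J)=\cox(\cD)-m$ with $m\in\{1,2\}$ (Theorem~\ref{main1}). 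The parity issue you anticipate in $\DD_n$ is exactly the exceptional pair $\{w_\alpha,w_\beta\}$ in Lemma~\ref{lem:strem}(3), and the induction is arranged so that this case is ruled out by a strict inequality $\ell(v^J)<\ell(u^J)$. Your entrywise-comparison approach could in principle be made to work, but it would amount to redoing this same induction in coordinates, and you would still have to isolate and handle the $\{w_\alpha,w_\beta\}$ phenomenon.
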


A priori this may suggest that the effective good divisibility on any complete flag manifold equals the Coxeter number minus one. However, a computer computation with {\tt SageMath}  (see Appendix \ref{ssec:div}) shows that this is the case for type $\DG_2$, while it is not for type $\DF_4$ and $\DE_6$, for which the effective good divisibility is $12$, equal to the Coxeter number.  
The cases $\DE_7$ and $\DE_8$ are computationally out of reach with our method.

Furthermore, we study effective good divisibility of any rational homogeneous variety of classical type. It is not difficult to show (see Corollary \ref{cor:pull}) that this number is lower bounded by the effective good divisibility of the corresponding complete flag manifold. The following statement shows that the two invariants coincide in most of the cases, and identify the exceptions.
Recall (see Section \ref{ss:preliminaries} for notation and details) that any choice of a subset $R$ of the set of nodes of $\cD$ defines a parabolic subgroup and, consequently, a rational homogeneous variety, that we denote by $\cD(R)$. 

\begin{theorem}\label{thm:2} The effective good divisibility of any rational homogeneous variety of classical type is equal to the effective good divisibility of the corresponding complete flag manifold, with the following exception: in the case in which $\cD =\DD_n$ and $R\cap\{1,n-1,n\}=\emptyset$, we have $\ed(\DD_n(R))=\cox(\DD_n)=2n-2$.
\end{theorem}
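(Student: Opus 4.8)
The plan is to reduce $\ed$ to a question about disjoint Schubert cycles and then to exploit the two obvious projections between rational homogeneous varieties. Recall that on $\cD(R)=G/P$ the effective cycles are exactly the nonnegative combinations of Schubert classes $\sigma_w$ ($w\in W^{P}$), and that a product of two of them vanishes if and only if each pairwise product $\sigma_u\sigma_v$ does; hence $\ed(\cD(R))+1$ equals the least value of $\ell(u)+\ell(v)$ for which $u,v\in W^{P}$ and $\sigma_u\sigma_v=0$. Pulling back along the projection $\pi$ from the complete flag manifold (which by Corollary \ref{cor:pull} and Theorem \ref{thm:1} already gives $\ed(\cD(R))\ge\cox(\cD)-1$), and using that $\pi^{*}$ is an injective ring homomorphism carrying Schubert classes to Schubert classes, one sees that $\sigma_u\sigma_v=0$ for $u,v\in W^{P}$ is equivalent to the same vanishing in the flag manifold, where by Kleiman's transversality theorem and the theory of Richardson varieties it is in turn equivalent to $u\not\le w_0v$ in the Bruhat order (under the convention $\codim\sigma_w=\ell(w)$). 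The complementary tool is equally elementary: for $i\in R$ the forgetful morphism $\cD(R)\to\cD(\{i\})$ gives $\ed(\cD(R))\le\ed(\cD(\{i\}))$.

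Together these reduce the theorem to the generalized Grassmannians $\cD(\{i\})$. For an upper bound that works uniformly, I would take, inside a (possibly isotropic) Grassmannian $\cD(\{i\})$, the two disjoint subvarieties $\{V:V\supseteq L\}$ and $\{V:V\subseteq H\}$ attached to a general (isotropic) line $L$ and a general hyperplane $H$ with $L\not\subseteq H$. A short dimension count shows that the sum of their codimensions equals $\cox(\cD)$ when $\cD$ has type $\AAA$, $\DB$ or $\DC$, and $\cox(\DD_n)+1=2n-1$ when $\cD=\DD_n$, for every node $i$. Hence, with Corollary \ref{cor:pull}, $\ed(\cD(\{i\}))=\cox(\cD)-1$ for all $i$ in types $\AAA,\DB,\DC$; the same equality holds for $\cD=\DD_n$ at the vector node $\DD_n(\{1\})=Q^{2n-2}$ (using two disjoint maximal linear subspaces $\P^{n-1}$, one from each ruling) and at the spinor nodes $n-1,n$ (using two disjoint Schubert varieties $\{V:V\supseteq L\}$ of codimension $n-1$ attached to general isotropic lines), the total codimension being $2n-2=\cox(\DD_n)$ in all these cases. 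For the interior nodes $2\le i\le n-2$ of $\DD_n$ the construction gives only $\ed(\DD_n(\{i\}))\le 2n-2$, and the content of the exceptional case is precisely that this extra unit cannot be removed.

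The non-exceptional statement now follows at once: unless $\cD=\DD_n$ and $R\cap\{1,n-1,n\}=\emptyset$, the set $R$ contains a node $i$ with $\ed(\cD(\{i\}))=\cox(\cD)-1$, whence $\cox(\cD)-1\le\ed(\cD(R))\le\ed(\cD(\{i\}))=\cox(\cD)-1$. So assume $\cD=\DD_n$ and $\emptyset\ne R\subseteq\{2,\dots,n-2\}$ (so $n\ge 4$). Pulling back along $\DD_n(R)\to\DD_n(\{i\})$, $i\in R$, the disjoint pair of total codimension $2n-1$ of the previous paragraph yields $\ed(\DD_n(R))\le 2n-2$. For the reverse inequality, note that $W^{P_R}\subseteq W^{P}$ where $P=P_{\{2,\dots,n-2\}}$, so it is enough to prove $\ed(\DD_n(\{2,\dots,n-2\}))\ge 2n-2$, that is: no pair $u,v\in W^{P}$ with $u\not\le w_0v$ has $\ell(u)+\ell(v)\le 2n-2$. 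By Theorem \ref{thm:1} such a pair has $\ell(u)+\ell(v)\ge 2n-2$ in any case, so everything reduces to ruling out the borderline value; equivalently, one must show that every disjoint Schubert pair of total codimension $2n-2$ in the complete flag manifold of type $\DD_n$ has a member outside $W^{P}$, i.e.\ with a right descent at a node of $\{1,n-1,n\}$.

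This last step is the crux, and it is genuinely a type-$\DD_n$ phenomenon. For the other classical types a minimal disjoint Schubert pair of the complete flag can be found with one entry a simple reflection; in type $\DD_n$ one computes that $h_i:=\#\{\text{positive roots with nonzero }\alpha_i\text{-coefficient}\}\ge 2n-2=\cox(\DD_n)$ for \emph{every} node $i$, so no minimal pair has an entry of length $1$, and the minimal pairs are of a subtler shape. The pairs pulled back from the two rulings of $\DD_n(\{1\})=Q^{2n-2}$ and from the spinor varieties already realize the bound $2n-2$, and each of their two entries has a right descent at $1$, $n-1$ or $n$; the claim to be proved is that every minimal pair has at least one entry with a right descent there. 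I would establish this in the signed-permutation model of $W(\DD_n)$: describe $W^{P}$ as the signed permutations all of whose right descents lie among $s_2,\dots,s_{n-2}$, use the Ehresmann rank criterion to analyse the relation $u\le w_0v$, and verify that obstructing it with both $u,v\in W^{P}$ forces $\ell(u)+\ell(v)\ge 2n-1$. Since $\ell(u)+\ell(v)\le 2n-2$ confines $u$ and $v$ to finitely many shapes, the verification is concrete; carrying it through is the one place where the argument requires real work. Once it is in place, the chain $2n-2\le\ed(\DD_n(\{2,\dots,n-2\}))\le\ed(\DD_n(R))\le\ed(\DD_n(\{i\}))\le 2n-2$ yields $\ed(\DD_n(R))=\cox(\DD_n)$, completing the proof.
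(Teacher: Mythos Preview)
Your overall architecture matches the paper's: bound $\ed(\cD(R))$ from below by $\ed(G/B)=\cox(\cD)-1$ via Corollary~\ref{cor:pull}, bound it from above via an explicit disjoint pair in some $\cD(\{i\})$ with $i\in R$, and observe that these bounds coincide except when $\cD=\DD_n$ and $R\subseteq\{2,\dots,n-2\}$. Your reduction of the exceptional case to the statement ``every md-pair in the $\DD_n$ complete flag has at least one member outside $W^{\{1,n-1,n\}}$'' is also correct and is equivalent to what the paper proves.

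The gap is that you do not prove this crucial step: you only announce a plan (signed-permutation model, Ehresmann-type rank criterion, ``finitely many shapes''). The paper's proof of this step is the content of the entire Section~\ref{sec:Dn} and is substantial. It does not argue directly in the signed-permutation model; instead it proves the stronger Proposition~\ref{prop:dnfail}: \emph{every} md-pair in $\DD_n(\Delta)$ is the pullback of an md-pair from $\DD_n(1)$, $\DD_n(n-1)$, or $\DD_n(n)$. This is established by first classifying completely the md-pairs in the quadric $\DD_n(1)$ (Proposition~\ref{prop:uniqueinquad}) and in the spinor varieties (Corollary~\ref{cor:disjoint}), then proving a structural constraint on md-pairs not coming from the quadric (Proposition~\ref{prop:d2}: they satisfy $\ell(v)=c(u)=n-1$, $\ell(v^J)=c^J(u)=1$, and $([X_{u_J}],[X^{v_J}])$ is an md-pair for the $\DD_{n-1}$ flag), and finally running an induction on $n$ that feeds the $\DD_{n-1}$ classification back in. The base cases $n=4,5$ are handled by explicit computer enumeration (Appendix~\ref{ssec:ed}, \ref{ssec:pull}). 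None of this machinery appears in your sketch, and the tableau criterion for Bruhat order in type $\DD$ is delicate enough that ``finitely many shapes'' is not, by itself, a workable substitute; you would at minimum need to reproduce the dichotomy of Proposition~\ref{prop:d2} in that language before any case analysis becomes tractable.
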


We then study the question of existence of nonconstant 
maps to rational homogeneous varieties. The main idea  is that, using the homogeneity of the target, two effective zero divisors on $M'$ can be pulled-back to $M$ providing 
zero divisors in the same codimensions; this imposes conditions relating $\ed(M)$ and $\ed(M')$ for $\varphi$ not to be constant: 

\begin{theorem}\label{teo:maps2b}
Let $\cD$ be a Dynkin diagram of classical type with set of nodes $\Delta$, $R\subset \Delta$ a nonempty subset. Let $M$ be a smooth complex projective variety such that $\ed(M)>\ed(\cD(R))$. Then there are no nonconstant morphisms from $M$ to $\cD(R)$.
\end{theorem}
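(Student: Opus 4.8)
The plan is to argue by contradiction: suppose $\varphi\colon M\to\cD(R)$ is a nonconstant morphism, and derive that $\ed(M)\le\ed(\cD(R))$, contradicting the hypothesis. The key tool is pullback of effective cycles along $\varphi$, combined with the fact that on a rational homogeneous variety $M'=\cD(R)$ two disjoint effective cycles in complementary-enough codimension exist (by the very definition of $\ed$), and moreover their *classes* can be represented by cycles whose supports can be moved into general position using the transitive action of the automorphism group.

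Concretely, set $e=\ed(\cD(R))$. By Definitions \ref{def:edupto} and \ref{def:ed} there exist nonzero effective cycles $y_i\in\AAA^i(M')$, $y_j\in\AAA^j(M')$ with $i+j=e+1$ and $y_i\cdot y_j=0$ in $\AAA^{\bullet}(M')$. Since $M'$ is rational homogeneous, its automorphism group $G$ acts transitively, so by a Kleiman-type transversality argument we may choose effective representatives $Y_i,Y_j$ of (translates of) these classes whose intersection is empty, i.e. $Y_i\cap Y_j=\emptyset$ as subsets of $M'$. Now pull back: $x_i:=\varphi^*y_i\in\AAA^i(M)$ and $x_j:=\varphi^*y_j\in\AAA^j(M)$. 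These are effective (pullback of effective cycles along a morphism, represented by $\varphi^{-1}(Y_i)$, $\varphi^{-1}(Y_j)$ with appropriate multiplicities — or more safely using the Gysin/flat-pullback formalism together with moving lemmas on $M'$), they live in the same codimensions $i,j$ with $i+j=e+1$, and their product $x_i\cdot x_j=\varphi^*(y_i\cdot y_j)=0$ since $\varphi^*$ is a ring homomorphism. The only remaining point is to check that $x_i$ and $x_j$ are \emph{nonzero}: this is where nonconstancy of $\varphi$ is used. If $\varphi$ is nonconstant, its image is a positive-dimensional subvariety of $M'$, and because the Schubert-type generators of $\AAA^{\bullet}(M')$ are ample-related (on a rational homogeneous variety every nonzero effective class has nonzero restriction to any positive-dimensional subvariety — concretely, a very ample divisor restricts nontrivially, and the $y_i$ can be taken to be products of such), the pullback $\varphi^*y_i$ is a nonzero class in $\AAA^i(M)$; similarly for $y_j$. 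Hence $M$ admits nonzero effective zero divisors in codimensions $i,j$ with $i+j=e+1$, so $\ed(M)\le e=\ed(\cD(R))$, a contradiction.

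The main obstacle, and the step deserving the most care, is the nonvanishing claim $\varphi^*y_i\ne 0$ and $\varphi^*y_j\ne 0$: a general pullback of a nonzero effective cycle can be zero (e.g.\ if the image of $\varphi$ avoids a representative), so one must exploit the homogeneity of $M'$ to move representatives and the positivity of Schubert classes. The cleanest route is presumably to reduce to the case where $y_i$ and $y_j$ are represented by intersections of ample divisors (or to invoke that Schubert classes on rational homogeneous varieties pull back nontrivially under nonconstant maps because the generator of $\AAA^1$ is ample and $\varphi^*$ of an ample class is nef and nonzero on a positive-dimensional image), and then propagate nonvanishing to the higher-codimension cycles appearing in the definition of $\ed$. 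This is likely already packaged in the auxiliary results leading to Corollary \ref{cor:pull} in the earlier part of the paper, in which case the argument above is essentially a formal consequence of that machinery together with the numerical inequality $\ed(M)>\ed(\cD(R))$.
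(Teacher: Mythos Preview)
Your proposal has a genuine gap at exactly the point you flag: the nonvanishing of $\varphi^*y_i$ and $\varphi^*y_j$. This is not merely a technicality to be patched by positivity arguments --- under the hypotheses of the theorem it is actually \emph{false} in general. Indeed, the paper's Lemma~\ref{lem:nosurj} shows precisely the opposite: since $i+j=\ed(\cD(R))+1\le \ed(M)$, one of $\varphi^*y_i$, $\varphi^*y_j$ \emph{must} vanish (equivalently, the image $M'=\varphi(M)$ is disjoint from a general translate of one of the two cycles). Your suggestion to represent $y_i,y_j$ as intersections of ample divisors cannot work either: the md-pairs in $\cD(R)$ are Schubert classes such as the two rulings of maximal linear spaces on an even-dimensional quadric, and these are not proportional to powers of the hyperplane class (if they were, their product would be nonzero). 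The appeal to Corollary~\ref{cor:pull} is also misplaced: that result concerns the smooth fibration $G/P_I\to G/P_J$, whose pullback is injective for structural reasons that do not carry over to an arbitrary nonconstant $\varphi$.

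The paper's proof proceeds quite differently. It takes the conclusion of Lemma~\ref{lem:nosurj} --- that $\varphi(M)$ avoids, say, the locus $\Sigma_p$ of linear spaces through a point $p$ (or the locus $\Sigma_H$ of those contained in a hyperplane $H$) --- as the \emph{starting point}. It then composes $\varphi$ with an explicit rational projection $\pi_p$ (resp.\ $\pi_H$) from $\cD(r)$ to a Grassmannian with strictly smaller $\ed$; this composite is a genuine morphism because the indeterminacy locus is exactly $\Sigma_p$ (resp.\ $\Sigma_H$). By induction on the size of the diagram (or minimality of $n$), the composite is constant, and since the fibers of $\pi_p,\pi_H$ are affine, $\varphi$ itself is constant (Lemma~\ref{lem:2raz}). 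Thus the argument is geometric and inductive rather than a direct pullback-of-zero-divisors computation.
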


As a consequence we obtain the following statement: 

\begin{corollary}\label{cor:maps}
Let $\cD'$ be a Dynkin diagram of classical type, whose set of nodes is $\Delta'$ and $\cD \subsetneq \cD'$ be a proper subdiagram, with set of nodes $\Delta$. Then, for any $R \subset \Delta, R' \subset\Delta'$, any morphism $\phi:\cD'(R') \to \cD(R)$ is constant.
\end{corollary}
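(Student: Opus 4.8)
The plan is to deduce Corollary~\ref{cor:maps} from Theorem~\ref{teo:maps2b}. Assume for contradiction that $\phi:\cD'(R')\to\cD(R)$ is nonconstant (here and below $\cD'$ is connected). If $R'=\emptyset$ then $\cD'(R')$ is a point and every morphism out of it is constant, so we may assume $R'\neq\emptyset$. Write $\cD=\bigsqcup_i\cD_i$ as the disjoint union of its connected components; then $\cD(R)=\prod_i\cD_i(R_i)$ with $R_i=R\cap\cD_i$, and since $\phi$ is nonconstant, its composition with the projection onto at least one factor is nonconstant. Fix such a factor and rename it $\cD_0(R_0)$. Then $\cD_0(R_0)$ is not a point, so $R_0\neq\emptyset$, and $\cD_0$, being a connected component of $\cD\subsetneq\cD'$ with $\cD'$ connected, is a connected proper subdiagram of $\cD'$ of classical type. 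By Theorem~\ref{teo:maps2b}, the existence of the nonconstant morphism $\cD'(R')\to\cD_0(R_0)$ forces $\ed(\cD'(R'))\le\ed(\cD_0(R_0))$; hence it suffices to establish the reverse strict inequality
\[
\ed(\cD'(R'))>\ed(\cD_0(R_0)).
\]

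To estimate the two sides I would invoke Theorems~\ref{thm:1} and~\ref{thm:2}. On one side, Corollary~\ref{cor:pull} together with Theorem~\ref{thm:1} gives $\ed(\cD'(R'))\ge\cox(\cD')-1$. On the other, Theorem~\ref{thm:2} combined with Theorem~\ref{thm:1} gives $\ed(\cD_0(R_0))=\cox(\cD_0)-1$, except when $\cD_0$ has type $\DD_m$ and $R_0$ avoids the three special nodes $\{1,m-1,m\}$, in which case $\ed(\cD_0(R_0))=\cox(\DD_m)=\cox(\cD_0)$. So the inequality above reduces to the combinatorial claim that, for connected classical Dynkin diagrams $\cD_0\subsetneq\cD'$, one has $\cox(\cD_0)\le\cox(\cD')-1$, and moreover $\cox(\cD_0)\le\cox(\cD')-2$ when $\cD_0$ has type $\DD$. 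Granting this, $\ed(\cD_0(R_0))\le\cox(\cD')-2<\cox(\cD')-1\le\ed(\cD'(R'))$ in every case; in the exceptional case $\ed(\cD_0(R_0))=\cox(\cD_0)$, but then the sharper bound $\cox(\cD_0)\le\cox(\cD')-2$ is in force.

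The combinatorial claim I would check by a short case analysis using Table~\ref{tab:cox} and the formulas $\cox(\DA_n)=n+1$, $\cox(\DB_n)=\cox(\DC_n)=2n$, $\cox(\DD_n)=2n-2$: a connected proper subdiagram has strictly fewer nodes, and $\cox$ is strictly increasing along proper inclusions of connected classical diagrams (a double edge, resp.\ a trivalent node, is preserved, which pins down the series), giving $\cox(\cD_0)\le\cox(\cD')-1$. For the refinement, if $\cD_0$ has type $\DD_m$ then $\cD_0$ contains a trivalent node, hence so does $\cD'$; as among connected classical Dynkin diagrams only those of type $\DD$ have a trivalent node, $\cD'$ has type $\DD_{m'}$ with $m'\ge m+1$, so $\cox(\cD')-\cox(\cD_0)=2(m'-m)\ge2$. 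The one genuinely delicate point is exactly this: the type $\DD$ exception of Theorem~\ref{thm:2} could a priori spoil the strict inequality, but it cannot, precisely because a type $\DD$ subdiagram forces the ambient connected classical diagram to be of type $\DD$ as well, producing a gap of at least $2$ between the two Coxeter numbers. Everything else is bookkeeping with Theorems~\ref{thm:1} and~\ref{thm:2} and Corollary~\ref{cor:pull}.
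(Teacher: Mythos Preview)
Your proof is correct and follows the same approach as the paper: deduce the result from Theorem~\ref{teo:maps2b} by verifying, via Theorems~\ref{thm:1} and~\ref{thm:2}, that $\ed(\cD'(R'))>\ed(\cD(R))$ in every case. The paper records this as a one-line observation; you have carefully filled in the details the paper leaves implicit, including the reduction to a connected component $\cD_0$ of $\cD$, and the key point that the exceptional $\DD$ case of Theorem~\ref{thm:2} cannot spoil the inequality because a type~$\DD$ subdiagram forces $\cD'$ to be of type~$\DD$ as well, giving a Coxeter-number gap of at least~$2$.
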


Corollary \ref{cor:maps}  follows from Theorem \ref{teo:maps2b}; in fact one can observe --using Theorems \ref{thm:1}, \ref{thm:2}--
that in every possible case $\ed(\cD'(R'))>\ed(\cD(R))$. 

%

This result extends and encompasses the previously quoted results, \cite{NO22, Tango1}. In fact, the nonexistence of nonconstant morphisms from a Grassmannian of linear subspaces of $\P^m$ to a Grassmannian of linear subspaces of $\P^{n}$ if $m>n$ follows now by considering the case in which $\cD'=\DA_m$, and $R'$ is a singleton. Our result may be applied to a number of different situations; for example, in the case in which $\cD'$ is of type $\DB,\DC,\DD$, it provides a similar statement also for orthogonal and isotropic Grassmannians.

Finally we note that our arguments make use of some computer computations of effective good divisibility and maximal disjoint pairs for some rational homogeneous varieties with low rank automorphism groups. We have included the corresponding scripts (that we have done with the computer software {\tt SageMath}) in the Appendix for the reader's convenience. 

After this paper was posted on ArXiv, we were informed by H. Hu, C. Li and Z. Liu that they obtained similar results,  using methods of quantum cohomology. Studying Hasse diagrams they were able to compute the effective good divisibility also in the exceptional cases. These results later appeared in the preprint \cite{HLL}. 

\section{Preliminaries}

\subsection{Effective good divisibility}\label{ssec:EGD}

Let $M$ be a smooth complex projective variety, with Chow ring $\AAA^{\bullet}(M)$.
The following definition (introduced in \cite[Definition 4.2]{MOS6} as an effective version of Pan's good divisibility, cf. \cite[Definition 4.1]{Pan}) considers the existence of effective zero divisors in this ring.

\begin{definition}
\label{def:edupto}

A variety $M$ has {\it effective good divisibility up to degree $s$} if, given effective cycles $x_i \in \AAA^{i}(M),$  $x_j \in \AAA^{j}(M)$ with $i+j = s$ and $x_i  x_j=0$, we have $x_i = 0$ or $x_j = 0$.
\end{definition}

The reason why this condition is called divisibility {\it up to} degree $s$ is the following. Consider two effective cycles $x_i \in \AAA^{i}(M),$  $x_j \in \AAA^{j}(M)$ with $i+j \leq s$ and $x_ix_j=0$, and the class $H$ of a very ample divisor. From the equality $x_ix_jH^{s-i-j}=0$, the divisibility up to degree $s$ implies that $x_i=0$, or $x_jH^{s-i-j}=0$, that is, $x_j=0$.

In particular, if $M$ has effective good divisibility up to degree $s$, then it has it up to degree $r$ for every $r\leq s$ and we can pose the following definition.

\begin{definition}
	\label{def:ed}
	The {\it effective good divisibility} of $M$, denoted by $\ed(M)$, is the maximum integer $s$ such that $M$  has  effective good divisibility up to degree $s$.
\end{definition}

\begin{definition}\label{def:Kleiman}
A pair $(x_i,x_j)$ with $x_i \in \AAA^{i}(M),$  $x_j \in \AAA^{j}(M)$ nonzero effective cycles such that $x_ix_j=0$, $i+j=\ed(M) +1$  will be called a {\it  maximal disjoint pair} (\edp for short) in $M$.
\end{definition}

\subsection{Notation: Weyl groups and homogeneous varieties}\label{ss:preliminaries}
Throughout the paper $G$ will denote a semisimple algebraic group, $B$ a {\it Borel subgroup}  of $G$, and $H$ a {\it Cartan subgroup} of $B$ (i.e., a maximal torus contained in $B$). The torus $H$ determines a {\it root system} $\Phi$, whose {\it Weyl group} $W$ is isomorphic to the quotient ${\rm N}(H)/H$ of the normalizer ${\rm N}(H)$ of $H$ in $G$. Within $\Phi$, $B$ determines a base of positive simple roots $\Delta=\{\alpha_i,\,\, i=1,\dots,n\}$, whose associated reflections we denote by $s_i\in W$.  
We denote by $\cD$ the {\it Dynkin diagram} of $G$, whose set of nodes is $\Delta$. We will assume that $\cD$ is connected, which is equivalent to say that the Lie algebra of $G$ is simple. The nodes of the Dynkin diagram $\cD$ will be numbered as in the standard reference \cite[p.~58]{Hum2} and we will identify each node $\alpha_i\in \Delta$ with the corresponding index $i$. Given any nonempty set of indices $I\subset\Delta$, we will denote by $\cD_I$ the (possibly disconnected) Dynkin subdiagram of $\cD$ whose set of nodes is $I$; the numbering of the nodes in a subdiagram $\cD_I$ of $\cD$ will be the same as in $\cD$. 

For every subset $I\subset \Delta$  we may consider the {\it parabolic subgroup} $P_I$ defined as $P_I:=BW_IB$, where $W_I\subset W$ is the subgroup of $W$ generated by the reflections $s_i$, $i\in I$. Taking the quotient by the subgroup $P_I$ we obtain the {\it rational homogeneous variety} $G/P_I$ (cf. \cite[\S~23.3]{FuHa}), which we will denote by $\cD(\Delta \setminus I)$, and represent by the Dynkin diagram $\cD$ marked in the nodes $\Delta \setminus I$. An inclusion $I\subset J\subset \Delta$ provides a smooth contraction $G/P_I\to G/P_J$.
Note that the complete flag manifold associated with $G$ can be written as
$$G/B=G/P_\emptyset=\cD(\Delta),$$
and that it dominates any $\cD(\Delta\setminus I)$ via a smooth contraction.

In the cases of classical type ($\DA_n,\DB_n,\DC_n,\DD_n$), rational homogeneous varieties have well known projective descriptions (see Table \ref {tab:RH} for some examples that we will consider in this paper): rational homogeneous varieties of types $\cD=\DA_n,\DB_n,\DC_n,\DD_n$ can be described as varieties of flags of linear spaces in $\cD(1)$.  
\begin{table}[h!]
\centering
\caption{Some rational homogeneous varieties of classical type.\label{tab:RH}} 
\begin{tabular}{|r||l|}
\hline
$\DA_n(1)$&$\PP^n$\\\hline
$\DB_n(1)$&Smooth quadric $\Q^{2n-1}$\\\hline
$\DC_n(1)$&$\PP^{2n-1}$\\\hline
$\DD_n(1)$&Smooth quadric $\Q^{2n-2}$\\\hline
$\DD_n(n-1),\DD_n(n)$&Spinor varieties (parametrizing $\PP^{n-1}$'s in $\Q^{2n-2}$)\\\hline
\end{tabular}
\end{table}

It is well known that the reflections $s_i$ with respect to the positive simple roots $\alpha_i$, $i=1,\dots,n$, generate the Weyl group $W$. Given an element $w\in W$, the smallest number $k$ for which there exist a sequence of indices $i_1,\dots,i_k\in\{1,\dots,n\}$ such that 
\[w=s_{i_1}\dots s_{i_k}\]
is called the {\it length of }$w$, and denoted by $\ell(w)$. An expression of $w$ as a product of $\ell(w)$ simple reflections is called {\it reduced}. 
Let us recall that reduced expressions are not in general unique. It is also known that there exists a unique element $w_0 \in W$ for which $\ell$ is maximal; it is called the {\it longest element of} $W$. This element is also the maximum of $W$, considered as a poset with the {\it Bruhat order}, which can be defined as follows: two elements $\tau,w\in W$ satisfy that $\tau\leq w$ if and only if some substring of a reduced expression of $w$ is a reduced expression of $\tau$.

\begin{notation}\label{not:words}
If $w=s_{i_1}s_{i_2} \dots s_{i_m}$ is a reduced expression of $w\in W$, for $0 \le k <m$ we will define 
\begin{itemize}
\item $w\lef{k}:=s_{i_1}s_{i_2} \dots s_{i_k}$, which has length $k$;
\item $w\rig{k}:=s_{i_{k+1}} \dots s_{i_m}$, with length $m-k$;
\end{itemize}
so that we may write:
\[w=w\lef{k} w\rig{k},\quad\mbox{for every }w\in W.\]
Note that this decomposition depends on the choice of a reduced expression of $w$.
\end{notation}

Let us also recall the following invariant of the group $W$:

\begin{definition}\label{def:coxnum}
Let $W$ be the Weyl group of a semisimple algebraic group $G$, with Dynkin diagram $\cD$. Then the {\it Coxeter number of} $W$, denoted by $h(\cD)$, is the order of any element of the form:
$$
s_{\sigma(1)}\dots s_{\sigma(n)}, \mbox{ with $\sigma$ a permutation of $\{1,\dots,n\}$.}
$$
The Coxeter number of $W$ can be defined in several equivalent ways, that show its importance as an invariant of finite Coxeter groups and Lie algebras (see \cite[3.16--3.20]{Hum3} and the references therein).
\end{definition}

\begin{definition}\label{def:WJ}
Given a subset $J\subset \Delta$, let $W_J$ be the following subgroup of $W$:
\[W_J:=\langle s_i \ | \ i\in J \rangle \subset W\] 
This is a Weyl group, whose longest element we denote by $w_{0J}$.
Following \cite[Section 5.12]{Hum3} we define
\[W^J:=\{w \in W \ | \ \ell(ws_i) > \ell(w)\ \text{for all} \ i \in J\}\] 
\end{definition}

Any element of $W$ can be decomposed uniquely as a product of an element of $W_J$ and an element of $W^J$:

\begin{lemma}\cite[Section 5.12]{Hum3}\label{lem:dec}
Given $v \in W$ there exist unique $v^J \in W^J$ and $v_J \in W_J$ such that $v =v^Jv_J$ and  $\ell(v)=\ell(v^J) + \ell(v_J)$. Moreover, $v^J$ is the unique element of smallest length in the left coset $vW_J$.
\end{lemma}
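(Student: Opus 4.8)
The plan is to prove the classical description of minimal-length coset representatives of $vW_J$ in three moves: produce $v^J$ by a minimality argument, establish the crucial length-additivity of products $W^J\cdot W_J$, and then read off uniqueness together with the last sentence.

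First I would let $v^J$ be an element of minimal length in the left coset $vW_J$ (finite, since $W$ is a finite Weyl group). For every $i\in J$ one has $v^Js_i\in vW_J$, so $\ell(v^Js_i)\geq\ell(v^J)$; since the lengths of two elements differing by a simple reflection differ by exactly $\pm 1$, this forces $\ell(v^Js_i)=\ell(v^J)+1$, so that $v^J\in W^J$. Setting $v_J:=(v^J)^{-1}v\in W_J$ then gives the factorization $v=v^Jv_J$.

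The heart of the argument, and the step I expect to be the main obstacle, is the claim that $\ell(uw)=\ell(u)+\ell(w)$ for every $u\in W^J$ and every $w\in W_J$ (applied to $u=v^J$, $w=v_J$ this yields the length identity in the statement). I would prove it by induction on $\ell(w)$: writing a reduced expression $w=w's_i$ with $i\in J$ and $\ell(w')=\ell(w)-1$, the inductive hypothesis gives $\ell(uw')=\ell(u)+\ell(w')$, and it remains to rule out $\ell(uw's_i)<\ell(uw')$. Here I would invoke the standard root-theoretic dictionary $\ell(xs_i)<\ell(x)\iff x(\alpha_i)\in\Phi^-$ (together with $\ell(x)=|\{\beta\in\Phi^+ : x(\beta)\in\Phi^-\}|$): if $\ell(uw's_i)<\ell(uw')$ then $(uw')(\alpha_i)\in\Phi^-$; but $\ell(w's_i)>\ell(w')$ gives $w'(\alpha_i)\in\Phi^+$, and since $w'\in W_J$ stabilizes the subsystem $\Phi_J=\Phi\cap\sum_{j\in J}\Z\alpha_j$, in fact $w'(\alpha_i)\in\Phi_J\cap\Phi^+$, i.e.\ a non-negative combination of the $\alpha_j$ with $j\in J$. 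Applying $u\in W^J$, which by the same dictionary sends each such $\alpha_j$ to a positive root, makes $(uw')(\alpha_i)$ a non-negative combination of simple roots, hence --- being a root --- positive, a contradiction. Thus $\ell(uw's_i)=\ell(uw')+1=\ell(u)+\ell(w)$, closing the induction. (This length-additivity can also be obtained purely combinatorially from the Strong Exchange Condition, as in \cite{Hum3}.)

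Finally, for uniqueness: if $u\in W^J$ also lies in the coset $vW_J=uW_J$, then every element of that coset is of the form $ux$ with $x\in W_J$, of length $\ell(u)+\ell(x)\geq\ell(u)$ by the previous step, with equality exactly when $x=e$. Hence $u$ is the unique element of smallest length in $vW_J$; in particular any such $u$ equals the $v^J$ built above, so $v^J$ is unique, $v_J=(v^J)^{-1}v$ is then forced, and the last sentence of the statement follows as well. Apart from the length-additivity claim, everything here is bookkeeping --- which is why the result can simply be cited.
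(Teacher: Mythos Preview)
The paper does not actually prove this lemma: it is stated with a citation to \cite[Section~5.12]{Hum3} and used as a black box. Your argument is a correct, self-contained proof of this classical result---indeed essentially the standard one found in Humphreys---so there is nothing to compare; you have simply supplied the details the paper chose to omit.
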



In general the subset $W^J\subset W$ is not a subgroup. By \cite[Corollary 2.5.3]{BjBr}, it has a unique element of maximal length, 
that we denote by $w_0^J$. Furthermore, we may observe the following:

\begin{remark}\label{rem:declong} In the case of the longest element $w_0$, the decomposition given in Lemma \ref{lem:dec} has the following properties (see \cite[formula (2.16)]{BjBr}): \[ (w_0)^J = w_0^J \qquad (w_0)_J =w_{0J}. \]
\end{remark}

\subsection{Schubert varieties}\label{ssec:Schubert}

Let us recall that for any algebraic variety $M$ we have a map between the Chow ring and the cohomology ring:
\begin{equation}\label{eq:chowandhom}
\AAA^\bullet(M)\lra \HH^\bullet(M,\Z),	
\end{equation}
which is a homomorphism of graded rings which doubles the degree (cf. \cite[Corollary 19.2]{Fult}). If $M$ is a rational homogeneous variety, the Bruhat decomposition of the corresponding semisimple group defines a cellular decomposition of $M$, and  the above map is an isomorphism  (see \cite[Example 19.1.11]{Fult}). Furthermore, the classes of the closures of the cells -- called {\it Schubert varieties} -- constitute a basis of the free abelian group $\AAA^\bullet(M)$. We will describe now Schubert varieties, considering separately the case $M=G/B$ and the case $M=G/P_J$, $J\neq\emptyset$.

\subsubsection{Schubert varieties in complete flag manifolds}\label{sssec:SchubertGB}

\begin{definition}\label{def:Schub}
Given $w \in W$ we define the {\it Schubert variety} $X_w$ as the closure of the $B$-orbit $BwB/B$ associated to $w$. For instance, $X_{w_0}=G/B$. It is known that the dimension of $X_w$ is equal to $\ell(w)$, and its codimension in $G/B$ is then:
\[c(w):=\codim(X_w,G/B)=\ell(w_0)-\ell(w).\] 
The classes of Schubert varieties in $\AAA^\bullet(G/B)$ are called {\it Schubert cycles} in $G/B$.
\end{definition}

\begin{remark}\label{rem:Schub}
In order to identify every Schubert variety as a union of cells, we note that the cellular decomposition is obtained by projecting to $G/B$ the Bruhat decomposition of $G$ (\cite[Section~28.3]{Hum1}),
\[G=\bigsqcup_{w\in W}BwB.\]
Since we have that $\overline{BwB}=\bigsqcup_{\tau \leq w}B\tau B$, then we get that $X_w=\bigsqcup_{\tau \leq w}B\tau B/B$.
\end{remark}

\begin{definition}\label{def:oppositeSchubert}
Denoting by $B^-:=w_0Bw_0$ the opposite Borel subgroup of $B$, the closure of 
$B^-wB/B\subset G/B$ is called the {\it opposite Schubert variety} $X^w$. \end{definition}

\begin{remark}\label{rem:oppoSchub}
The relation between Schubert varieties and opposite Schubert varieties is the following. For every $w\in W$:  
\begin{equation}\label{eq:oppoSchub}
X^w=w_0X_{w_0w}.
\end{equation}
The variety $X^w$ has dimension $c(w)$ and codimension $\ell(w)$. 
In particular, we have the following equality of classes in $\AAA^\bullet(G/B)$:
\begin{equation}\label{eq:oppoSchub2}
[X^v]=[X_{w_0v}]\in \AAA^{\ell(v)}(G/B) \quad \text{ and } \quad [X_v]=[X^{w_0v}]\in \AAA^{c(v)}(G/B).\end{equation} 
\end{remark}

Let us recall some known facts that will allow us to write $\ed(G/B)$ in terms of products of Schubert cycles and intersections of Schubert varieties:
\begin{proposition}\label{prop:generalfacts}
Consider the complete flag manifold $G/B$:
\begin{enumerate}
	\item[(1)] The Schubert cycles form a $\Z$-basis of $\AAA^{\bullet}(G/B)$.
	\item[(2)] The cones of effective classes in $\AAA^c(G/B)\otimes_\Z\Q$ are polyhedral cones generated by the Schubert cycles of codimension $c$, for every $c\geq 0$. 
	\item[(3)] The product of two Schubert cycles can be written as an integral combination of Schubert cycles: 
	\[
	[X_u]\cdot [X_v]= \sum_{\substack{w\in W\\
	c(u)+c(v)=c(w)}} c_{uv}^w[X_w]
	\] 
	with nonnegative coefficients $c_{uv}^w$.
	\item[(4)] Given $u,v\in W$, the varieties $X_u$, $X^v$ meet properly -- i.e., every component of their intersection has the expected codimension $\ell(v)+c(u)$--  if and only if  $v \le u$. In particular, if $\ell(v)+c(u) \le \dim G/B$ then
	\begin{equation*}\label{eq:fund}
	 [X_u]\cdot [X^v]\neq 0 \Longleftrightarrow 
	 v \le u.
	 \end{equation*}
\end{enumerate}  
\end{proposition}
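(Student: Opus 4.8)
The statement to prove is Proposition~\ref{prop:generalfacts}, which collects four standard facts about Schubert cycles in the complete flag manifold $G/B$. Since these are well-known, the plan is to assemble references and short arguments rather than prove everything from scratch; the only item requiring genuine work is the geometric intersection-theoretic statement in part~(4).

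\textit{Plan for parts (1)--(3).} For part~(1) I would invoke the cellular decomposition of $G/B$ coming from the Bruhat decomposition, already recalled in Remark~\ref{rem:Schub}, together with the fact that for a variety with a cellular (affine paving) decomposition the classes of the cell closures freely generate the Chow group; this is exactly \cite[Example 19.1.11]{Fult}, combined with the ring isomorphism \eqref{eq:chowandhom}. For part~(2), the key point is that $\AAA^c(G/B)\otimes\Q$ is finite-dimensional with the Schubert basis, so its cone of effective classes is \emph{a priori} the convex cone generated by the classes of \emph{all} effective codimension-$c$ cycles; I would argue that any effective cycle is, by part~(1) and the projective normality / base-point-free nature of Schubert classes, a nonnegative combination of Schubert cycles — the cleanest route is to note that effective classes pair nonnegatively with the nef classes dual to Schubert cycles (opposite Schubert cycles), and that the Schubert cycles themselves form the extremal rays, so the effective cone is the (closed, polyhedral) simplicial cone they span. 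For part~(3), the expansion $[X_u][X_v]=\sum c_{uv}^w[X_w]$ with the sum restricted to $c(w)=c(u)+c(v)$ is forced by part~(1) and grading; nonnegativity of $c_{uv}^w$ is the classical positivity of Schubert structure constants, which follows from part~(4) (or from Kleiman transversality): moving $X_v$ to a general translate $gX_v$, the intersection $X_u\cap gX_v$ is generically transverse, hence its class is effective, so by part~(2) it is a nonnegative combination of Schubert cycles, and reading off coefficients against the dual basis gives $c_{uv}^w\ge 0$.

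\textit{Plan for part (4), the main point.} This is the statement that $X_u$ and $X^v$ meet properly if and only if $v\le u$ in the Bruhat order, and consequently $[X_u]\cdot[X^v]\ne 0 \iff v\le u$ when the codimensions add up to at most $\dim G/B$. The forward direction of the equivalence about nonvanishing: if $v\le u$, then by Kleiman's transversality theorem a general $B^-$-translate of $X^v$ meets $X_u$ transversely, and the intersection is nonempty precisely because the $B$-orbit closure $X_u$ and the $B^-$-orbit closure $X^v$ share the point $vB/B$ (this is the classical fact that $X_u\cap X^v\ne\emptyset \iff v\le u$, due to the structure of double Bruhat cells; one checks $vB/B\in X_u$ since $v\le u$ and $vB/B\in X^v$ tautologically). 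Then a nonempty transverse intersection has class $[X_u]\cdot[X^v]$ which is a nonzero effective class by part~(2). Conversely, if $v\not\le u$ then $X_u\cap X^v=\emptyset$: indeed $X_u=\bigsqcup_{\tau\le u}B\tau B/B$ and $X^v=\bigsqcup_{\sigma\ge v}B^-\sigma B/B=\bigsqcup_{\sigma\ge v}(w_0 B w_0\sigma B/B)$, and the cells $B\tau B/B$ and $B^-\sigma B/B$ intersect iff $\tau=\sigma$ (a standard fact about opposite Bruhat cells), so the intersection is empty unless there is $\tau$ with $v\le\tau\le u$, i.e. unless $v\le u$; emptiness forces the product of classes to vanish. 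The "meet properly" refinement — that when $v\le u$ \emph{every} component of $X_u\cap X^v$ has codimension exactly $\ell(v)+c(u)$ — is the Richardson-variety transversality result: $X_u\cap X^v$ is the Richardson variety, which is irreducible (or at least pure) of dimension $\ell(u)-\ell(v)=\dim G/B-(\ell(v)+c(u))$; I would cite Richardson \cite{} or Brion's lectures for this, or derive it by induction on $\ell(u)-\ell(v)$ using the $SL_2$-fibration structure of Bott--Samelson type resolutions.

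\textit{Expected main obstacle.} Parts (1), (3) are essentially bookkeeping on top of cited results; the genuine content is the "meet properly" clause of part~(4), i.e. the purity/irreducibility of Richardson varieties and the transversality of $X_u$ with general translates of $X^v$. The honest thing is that this is classical (Richardson, Deodhar, Kleiman), so in a paper of this kind I would state it with a reference and only spell out the \emph{nonvanishing} equivalence $[X_u]\cdot[X^v]\ne 0\iff v\le u$ in detail, since that is the form actually used in the sequel to compute $\ed(G/B)$. The subtle point to be careful about is the translation between $X^v$ and $X_{w_0v}$ via \eqref{eq:oppoSchub}, so that the Bruhat-order condition is stated for the correct pair of indices; getting this dictionary right (as recorded in Remark~\ref{rem:oppoSchub}) is where sign/index errors would creep in.
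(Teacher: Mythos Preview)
Your proposal is essentially correct and would yield a valid proof, but it diverges from the paper's approach, which is much terser: the paper simply cites \cite[Theorem~1]{FMSS} (applied to the unipotent radical of $B$) for parts (1) and (2), and \cite[Propositions~1.3.6 and~1.3.2]{Brion2005} for parts (3) and (4), with no further argument. The FMSS reference handles (2) uniformly via the theory of spherical varieties, which is cleaner than your duality sketch: your proposed argument for (2) --- pairing an arbitrary effective class against opposite Schubert cycles to extract nonnegative coefficients --- ultimately relies on Kleiman transversality anyway, so it is not circular, but it is more roundabout than invoking FMSS directly. Your treatment of (3) via Kleiman and (4) via Richardson varieties is exactly what underlies Brion's cited propositions, so there you are unpacking the reference rather than taking a genuinely different path.

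One concrete slip to flag in your part (4): you write that the cells $B\tau B/B$ and $B^-\sigma B/B$ intersect if and only if $\tau=\sigma$. This is false; the correct statement (Deodhar) is that they intersect if and only if $\sigma\le\tau$. Fortunately your conclusion survives: with the correct criterion, $X_u\cap X^v\ne\emptyset$ requires some $\sigma,\tau$ with $v\le\sigma\le\tau\le u$, hence $v\le u$, exactly as you want. But the intermediate assertion should be fixed.
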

\begin{proof} Applying \cite[Theorem 1]{FMSS} to the unipotent part of $B$ we get (1) and (2). 
For (3) and (4) see  \cite[Propositions 1.3.6 and  1.3.2]{Brion2005}.
\end{proof}

We can thus re-write the effective good divisibility of the complete flag manifold $G/B$ as a property of the group $W$:
\begin{lemma}\label{lem:toprove}
The variety $G/B$ has effective good divisibility up to degree $s$ if and only if for every two elements $u,v\in W$ such that $\ell(v)+c(u) = s$ we have $v \le u$.
\end{lemma}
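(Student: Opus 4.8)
The plan is to turn the definition of effective good divisibility up to degree $s$ (Definition \ref{def:edupto}) into the combinatorial statement about $W$, using the three facts from Proposition \ref{prop:generalfacts}: the effective cone in each codimension is generated by Schubert cycles, products of Schubert cycles stay effective, and $[X_u]\cdot[X^v]\ne 0$ exactly when $v\le u$ (in the admissible range of codimensions). Throughout, I would pass between the two descriptions of $\AAA^j(G/B)$ afforded by Remark \ref{rem:oppoSchub}: it is spanned by the Schubert cycles $[X_w]$ with $c(w)=j$, equivalently by the opposite Schubert cycles $[X^v]$ with $\ell(v)=j$.

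For the implication ``$\Leftarrow$'', assume $v\le u$ whenever $\ell(v)+c(u)=s$, and take nonzero effective cycles $x_i\in\AAA^i(G/B)$, $x_j\in\AAA^j(G/B)$ with $i+j=s$. By Proposition \ref{prop:generalfacts}(2) I would write $x_i=\sum_{c(u)=i}a_u[X_u]$ with $a_u\ge 0$ not all zero, and, using the second presentation above, $x_j=\sum_{\ell(v)=j}b_v[X^v]$ with $b_v\ge 0$ not all zero. Then $x_ix_j=\sum_{u,v}a_ub_v\,[X_u][X^v]$, and for each pair with $a_ub_v\ne 0$ one has $\ell(v)+c(u)=i+j=s$, so $v\le u$ by hypothesis, whence $[X_u][X^v]\ne 0$ by Proposition \ref{prop:generalfacts}(4). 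Since every product $[X_u][X^v]=[X_u][X_{w_0v}]$ is effective by Proposition \ref{prop:generalfacts}(3), and the effective cone of $\AAA^{s}(G/B)$ is pointed (being generated by the linearly independent family of Schubert cycles of codimension $s$, parts (1)--(2)), a sum of such classes with at least one nonzero summand cannot vanish; hence $x_ix_j\ne 0$, i.e.\ $G/B$ has effective good divisibility up to degree $s$.

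For ``$\Rightarrow$'' I would argue by contraposition: if there are $u,v\in W$ with $\ell(v)+c(u)=s$ and $v\not\le u$, then $x_i:=[X_u]\in\AAA^{c(u)}(G/B)$ and $x_j:=[X^v]\in\AAA^{\ell(v)}(G/B)$ are nonzero effective cycles of codimensions summing to $s$, and $x_ix_j=[X_u][X^v]=0$ by Proposition \ref{prop:generalfacts}(4); so $G/B$ fails to have effective good divisibility up to degree $s$. This gives the equivalence.

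I do not expect a real obstacle, since the argument is a repackaging of Proposition \ref{prop:generalfacts}; the only delicate points are bookkeeping. One must run the ``$\Leftarrow$'' direction through the generation statement (2) rather than through a single ``arbitrary'' effective cycle, so that the positivity argument is legitimate; one must switch freely between the $[X_w]$ and the $[X^v]$ presentations of $\AAA^j(G/B)$ so that the intersection criterion (4), stated for one ordinary and one opposite Schubert variety, applies verbatim; and one should note that the hypothesis $\ell(v)+c(u)=s\le\dim G/B$ needed to invoke (4) holds in the range relevant here, the remaining range $s>\dim G/B$ being trivial (both sides of the stated equivalence behave consistently there).
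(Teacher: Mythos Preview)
Your proof is correct and follows essentially the same approach as the paper's: decompose the effective cycles into Schubert cycles via Proposition~\ref{prop:generalfacts}(1)--(2), use the nonnegativity of structure constants (3) to ensure no cancellation, and invoke (4) to translate the nonvanishing of each product $[X_u]\cdot[X^v]$ into the Bruhat condition $v\le u$. The paper's proof is terser and does not spell out the passage to opposite Schubert cycles or the pointedness-of-the-effective-cone argument, but the substance is identical.
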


\begin{proof}
	The statement follows straightforwardly from Proposition \ref{prop:generalfacts}. Let us write $x_i$ and $x_j$ as positive integral linear combinations of Schubert cycles using (1) and (2). Then, by (3) $x_i x_j$ is a sum of nonnegative multiples of Schubert cycles as is shown; we conclude by (4).
\end{proof}

\subsubsection{Schubert varieties in rational homogeneous varieties}\label{sssec:SchubertGP}

The Bruhat decomposition also defines a cellular decomposition of any rational homogeneous variety of the form $G/P_J$, with $J \subset \Delta$, as follows:

\begin{definition}\label{def:schubertGP}
For any $w \in W^J$ we define the {\it Schubert variety} $X_{w}\subset G/P_J$ as the closure of $C_{w} := BwP_J/P_J \subset G/P_J$, and the {\it opposite Schubert variety} $X^{w}\subset G/P_J$ as the closure of $C^{w} := B^-wP_J/P_J \subset G/P_J$; their dimensions are, respectively, $\ell(w)$ and $\dim(G/P_J)-\ell(w)$.  
As in the case of $G/B$, the classes of Schubert varieties in $\AAA^\bullet(G/P_J)$  are called {\it Schubert cycles}.
\end{definition}

\begin{remark}\label{rem:schubertGP}
	In particular we have $X^w= w_0X_{(w_0w)^J}$, for every $w\in W^J$. Therefore we may write: 
	\begin{equation}\label{eq:updown}
		[X^w] = [X_{(w_0w)^J}], \quad \text{and} \quad [X_w] = [X^{(w_0w)^J}].
	\end{equation}
\end{remark}

\begin{remark}\label{rem:chowrings}
We have used the same notation for Schubert varieties in $G/B$ and $G/P_J$. Let us discuss the relation among them, by means of the natural projection $\pi:G/B\to G/P_J$. Note first that, an element in $G/B$ can be written in the form $bwB$ for a unique $w\in W$, and some $b\in B$. By Lemma \ref{lem:dec}, $w$ can be decomposed uniquely in the form $w^Jw_J$, with $w^J\in W^J$, $w_J\in W_J$; then \cite[Corollary~28.3]{Hum1} implies that the image of $bwB$ into $G/P_J$ is equal to $bw^JP_J$, and, in particular: 
\[
\pi(X_w)=X_{w^J}.
\]
Moreover, the pullback map: 
\[
\pi^*:\AAA^\bullet(G/P_J)\lra \AAA^\bullet (G/B),
\]
 is a monomorphism of rings; in fact, since these two rings can be identified with the corresponding cohomology rings, see (\ref{eq:chowandhom}), the injectivity of the map follows from  \cite[Corollary 5.4]{BGG}. More concretely, for every $w\in W^J$, the inverse image of $X_{w}\subset G/P_J$ is
\begin{equation}\label{eq:chowrings}
\pi^{-1}(X_{w})=\bigsqcup_{\substack{\tau\in W^J,\,\,
\tau\leq w\\\sigma\leq w_{0J}}}B\tau\sigma B =X_{ww_{0J}}.
\end{equation}
Note that $\ell(ww_{0J})=\ell(w)+\ell(w_{0J})$ by Lemma \ref{lem:dec}.  

Moreover, using that $w_0ww_{0J} \in W^J$ for every $w \in W^J$ (see \cite[Proposition 2.5.4]{BjBr}), we may write the following equalities of subvarieties of $G/P_J$:
\begin{equation}\label{eq:oS}
X^v=w_0X_{(w_0v)^J}=w_0X_{(w_0vw_{0J})^J} =w_0X_{w_0vw_{0J}},
\end{equation}
hence, for $v\in W^J$, we get that the pullback of $X^v\subset G/P_J$ is $X^v\subset G/B$:
\begin{equation}\label{eq:chowrings2} 
\pi^{-1}(X^v) =w_0 \pi^{-1}(X_{w_0vw_{0J}})=w_0(X_{w_0vw_{0J}w_{0J}})=w_0X_{w_0v} =X^v.\end{equation}

In particular, $\codim(X^v,G/P_J)=\codim(X^v,G/B)=\ell(v)$.

\end{remark}

Since, given $I\subset J\subset \Delta$ the monomorphism $\pi^*:\AAA^\bullet(G/P_J)\lra \AAA^\bullet (G/P_I)$, induced by the natural projection $G/P_I\to G/P_J$, preserves effectivity 
we may conclude the following: 

\begin{corollary}\label{cor:pull} If  $I\subset J\subset \Delta$, then $\ed(G/P_I) \le \ed(G/P_J)$. In particular $\ed(G/B)
\le  \ed(G/P_J)$ for every $J \subset \Delta$.
\end{corollary}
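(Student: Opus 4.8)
The plan is to establish the two inequalities in Corollary~\ref{cor:pull} by a single observation about the pullback maps in Chow rings, applied in the two relevant situations. First I would fix an inclusion of subsets $I\subset J\subset \Delta$. This induces a natural smooth contraction $p:G/P_I\to G/P_J$ (as recorded in Section~\ref{ss:preliminaries}), and hence a ring homomorphism $p^*:\AAA^\bullet(G/P_J)\to\AAA^\bullet(G/P_I)$ which doubles nothing, preserves the grading, and is a monomorphism --- the injectivity being the content of the discussion around~(\ref{eq:chowrings}), via \cite[Corollary 5.4]{BGG}, together with the identification~(\ref{eq:chowandhom}) of Chow and cohomology rings for rational homogeneous varieties. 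The one extra ingredient is that $p^*$ carries effective classes to effective classes: this holds because the pullback of a subvariety $Z\subset G/P_J$ under the flat (indeed smooth) morphism $p$ is an effective cycle of the same codimension representing $p^*[Z]$, and by Proposition~\ref{prop:generalfacts}(2) (applied after pulling back further to $G/B$, or directly) every effective class is a nonnegative combination of Schubert cycles, each of which pulls back to an effective Schubert cycle as in~(\ref{eq:chowrings}).

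Granting this, the argument is a short contrapositive. Suppose $\ed(G/P_I)=e$, so that there exists a maximal disjoint pair $(x_i,x_j)$ on $G/P_I$ with $x_i\in\AAA^i(G/P_I)$, $x_j\in\AAA^j(G/P_I)$ nonzero effective, $x_ix_j=0$, and $i+j=e+1$; I want to show $G/P_J$ admits effective zero divisors in total codimension at most $e+1$ only if $\ed(G/P_J)$ could be smaller, i.e.\ I argue the other direction: I show that $G/P_J$ having effective good divisibility up to degree $s$ forces $G/P_I$ to have it up to degree $s$ as well. Indeed, take nonzero effective $y_i\in\AAA^i(G/P_I)$, $y_j\in\AAA^j(G/P_I)$ with $i+j=s$; these need not lie in the image of $p^*$, so this naive approach does not immediately work, and the correct implication runs the opposite way.

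So instead I would argue directly that $\ed(G/P_I)\le\ed(G/P_J)$ by producing, out of a maximal disjoint pair on $G/P_I$... no: the clean statement is that effective zero divisors on $G/P_J$ pull back to effective zero divisors on $G/P_I$ in the same codimensions, hence $\ed(G/P_I)\le\ed(G/P_J)$. Concretely, let $s=\ed(G/P_J)$ (so $G/P_J$ fails effective good divisibility at degree $s+1$) and let $(z_i,z_j)$ be a maximal disjoint pair on $G/P_J$, with $i+j=s+1$. Then $p^*z_i$ and $p^*z_j$ are effective (by the preservation of effectivity above), nonzero (by injectivity of $p^*$), and satisfy $(p^*z_i)(p^*z_j)=p^*(z_iz_j)=0$; since $i+j=s+1$, this exhibits a pair of nonzero effective zero divisors on $G/P_I$ with codimension sum $s+1$, whence $\ed(G/P_I)\le s=\ed(G/P_J)$. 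Applying this with $J$ arbitrary and $I=\emptyset$ (so $P_\emptyset=B$) gives the final clause $\ed(G/B)\le\ed(G/P_J)$.

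The only genuine point requiring care --- and the step I expect to be the main obstacle to write cleanly --- is the assertion that $p^*$ preserves effectivity, since a priori pullback of cycles under a smooth morphism is well-behaved but one must make sure the resulting class is the Chow-theoretic pullback and is nonzero; this is why the text invokes the explicit formula~(\ref{eq:chowrings}) computing $\pi^{-1}(X_w)$ for the projection to $G/B$, which reduces everything to the transparent case of Schubert cycles and the polyhedrality statement in Proposition~\ref{prop:generalfacts}(2). Everything else is formal: injectivity of $p^*$ is quoted, and the numerology $i+j=\ed(G/P_J)+1$ is preserved verbatim under a grading-preserving ring map.
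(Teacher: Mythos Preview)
Your argument is correct and follows exactly the approach the paper takes: the corollary is stated immediately after the observation that the pullback $\pi^*:\AAA^\bullet(G/P_J)\to\AAA^\bullet(G/P_I)$ is an injective ring homomorphism preserving effectivity, and the deduction is precisely the one you spell out---an md-pair on $G/P_J$ pulls back to nonzero effective zero divisors on $G/P_I$ with the same codimension sum. The only comment is stylistic: the middle paragraph where you start a contrapositive, realize the implication runs the wrong way, and then restart, should be deleted in a clean write-up; the final paragraph beginning ``So instead I would argue directly'' is the entire proof.
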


Finally, as in the case of $G/B$, the properties of intersection of Schubert and opposite Schubert varieties provide enough information  to understand the products of effective cycles, and can be translated in terms of the Bruhat ordering. 

\begin{notation}\label{not:codim}
For any $u \in W$, we define
\[c^J(u) := \ell(w_0^J)-\ell(u^J), \qquad c_J(u) := \ell(w_{0J})-\ell(u_J). 
\]
\end{notation}

The integer $c^J(u)=c^J(u^J)$ is the codimension of the Schubert variety $X_{u^J} \subset G/P_J$, while $c_J(u)=c_J(u_J)$ is the codimension of the Schubert variety $X_{u_J}$ in $P_J/B$.
As a consequence of Lemma \ref{lem:dec} and Remark \ref{rem:declong}
we have
\begin{equation}\label{eq:cod}
c(u) = c^J(u) +c_J(u).
\end{equation}

The following statement is an extension of Proposition \ref{prop:generalfacts} to any rational homogeneous variety:

\begin{proposition}\label{prop:generalfactsGP}
	Consider the rational homogeneous variety $G/P_J$:
	\begin{enumerate}
		\item[(1)] The Schubert cycles form a $\Z$-basis of $\AAA^{\bullet}(G/P_J)$.
		\item[(2)] The cones of effective classes in $\AAA^c(G/P_J)\otimes_\Z\Q$ 
 are polyhedral cones generated by the Schubert cycles of codimension $c$, for every $c\geq 0$.
		\item[(3)] The product of two Schubert cycles $X_u, X_v$, $u,v\in W^J$, 
		can be written as an integral combination of Schubert cycles: 
	\[
	[X_u]\cdot [X_v]= \sum_{\substack{w\in W^J\\
	c^J(u)+c^J(v)=c^J(w)}} c_{uv}^w[X_w]
	\] 
	with nonnegative coefficients $c_{uv}^w$.
		\item[(4)] Given $u,v\in W^J$, the varieties $X_u$, $X^v$ meet properly -- i.e., every component of their intersection has the expected codimension $\ell(v)+c^J(u)$--  if and only if  $v \le u$. In particular, if $\ell(v)+c^J(u) \le \dim G/P_J$ then
	\begin{equation*}\label{eq:fundJ}
	 [X_u]\cdot [X^v]\neq 0 \Longleftrightarrow 
	 v \le u.
\end{equation*}
	\end{enumerate}  
\end{proposition}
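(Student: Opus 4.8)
The plan is to deduce Proposition~\ref{prop:generalfactsGP} from the corresponding statement for the complete flag manifold, Proposition~\ref{prop:generalfacts}, using the monomorphism $\pi^*:\AAA^\bullet(G/P_J)\to\AAA^\bullet(G/B)$ of Remark~\ref{rem:chowrings}, together with the explicit computation of the pullbacks of Schubert and opposite Schubert cycles. For part~(1) one invokes the general fact (already cited in \S\ref{ssec:Schubert}, via \cite{Fult}) that the classes of the closures of the Bruhat cells of $G/P_J$ form a $\Z$-basis; alternatively one pushes the statement through $\pi^*$, noting that $\pi^*[X_w]=[X_{ww_{0J}}]$ by \eqref{eq:chowrings}, and that $\{ww_{0J}\mid w\in W^J\}$ indexes a subset of a $\Z$-basis of $\AAA^\bullet(G/B)$. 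For part~(2) the argument is again an application of \cite[Theorem~1]{FMSS} to the unipotent radical of $B$ acting on $G/P_J$, exactly as for $G/B$; the Bruhat cells are the $B$-orbits, so the effective cone in each codimension is the rational polyhedral cone they generate.

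For part~(3), I would argue that $\pi^*$ is a ring monomorphism preserving the grading (up to the usual doubling), so computing $[X_u]\cdot[X_v]$ in $\AAA^\bullet(G/P_J)$ is the same as computing $\pi^*[X_u]\cdot\pi^*[X_v]=[X_{uw_{0J}}]\cdot[X_{vw_{0J}}]$ in $\AAA^\bullet(G/B)$ and then recognizing which classes in the answer lie in the image of $\pi^*$. The subtlety here is that $uw_{0J}$ need not be of the form $w^Jw_{0J}$ a priori, but $\ell(uw_{0J})=\ell(u)+\ell(w_{0J})$ by Lemma~\ref{lem:dec} when $u\in W^J$, so $(uw_{0J})^J=u$ and $(uw_{0J})_J=w_{0J}$, and the product in $\AAA^\bullet(G/B)$ expands over $w'\in W$ with $c(w')=c(uw_{0J})+c(vw_{0J})$; by \eqref{eq:cod} and the fact that $c_J$ of each summand is forced to equal $\ell(w_{0J})$ when the class lies in $\im(\pi^*)$, only the terms with $w'=ww_{0J}$, $w\in W^J$, $c^J(w)=c^J(u)+c^J(v)$, survive after applying $(\pi^*)^{-1}$. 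The coefficients $c_{uv}^w$ are then a subset of the (nonnegative) structure constants $c_{uw_{0J},vw_{0J}}^{ww_{0J}}$ in $G/B$, hence nonnegative. Nonnegativity can also be seen directly from effectivity: $[X_u][X_v]$ is an effective class, so by part~(2) it is a nonnegative combination of Schubert cycles.

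For part~(4), I would reduce to Proposition~\ref{prop:generalfacts}(4) via the pullback formulas \eqref{eq:chowrings} and \eqref{eq:chowrings2}. Since $\pi^{-1}(X_u)=X_{uw_{0J}}$ and $\pi^{-1}(X^v)=X^v$ (as subvarieties of $G/B$, for $u,v\in W^J$), and $\pi$ is flat with irreducible fibers, the intersection $X_u\cap X^v$ in $G/P_J$ meets properly if and only if $X_{uw_{0J}}\cap X^v$ meets properly in $G/B$; more precisely the fiber of $X_{uw_{0J}}\cap X^v\to X_u\cap X^v$ over a point is (an open subset of) $X_{uw_{0J}}$ intersected with the appropriate $P_J$-coset, which has the right dimension $\ell(w_{0J})$ precisely when the Schubert cell structure does not degenerate. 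Applying Proposition~\ref{prop:generalfacts}(4) to $X_{uw_{0J}}$ and $X^v$ in $G/B$, properness holds iff $v\le uw_{0J}$; and since $v\in W^J$ while $u\in W^J$, one has $v\le uw_{0J}\iff v\le u$ by \cite[Proposition~2.5.1 or Corollary~2.5.2]{BjBr} (compatibility of the Bruhat order with the parabolic decomposition: for $v\in W^J$, $v\le x$ in $W$ iff $v\le x^J$ in $W^J$). The last assertion, the equivalence $[X_u][X^v]\ne0\iff v\le u$ under the dimension hypothesis, then follows: if the intersection is proper and nonempty it has a $0$-dimensional component contributing positively (Kleiman transversality plus effectivity of Schubert classes), and if $v\not\le u$ the intersection in $G/B$, hence in $G/P_J$, is empty.

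The main obstacle I expect is part~(4): specifically, making rigorous the passage from "proper intersection upstairs in $G/B$" to "proper intersection downstairs in $G/P_J$", and matching the Bruhat-order condition across the two settings. The cleanest route is probably to bypass the geometry of $\pi$ and instead cite directly the classical intersection theory of Schubert cells in $G/P_J$ (for instance \cite[Propositions~1.3.2 and~1.3.6]{Brion2005}, already referenced for the $G/B$ case, which are in fact stated there in the generality of $G/P_J$), so that parts~(3) and~(4) become literally a citation, with \eqref{eq:cod} and the parabolic compatibility of the Bruhat order used only to phrase the statement in terms of $c^J$ and $W^J$. Parts~(1) and~(2) are then immediate from \cite[Theorem~1]{FMSS} and \cite{Fult} exactly as in Proposition~\ref{prop:generalfacts}.
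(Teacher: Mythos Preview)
Your proposal is correct and follows essentially the same approach as the paper: parts (1)--(3) are dispatched by citation (\cite{BGG} for (1), \cite{FMSS} for (2) and (3); your more elaborate pullback argument for (3) is unnecessary but not wrong), and part (4) is reduced to Proposition~\ref{prop:generalfacts}(4) via the pullback $\pi^{-1}(X_u)=X_{uw_{0J}}$. The only tactical difference is that the paper intersects $X_{uw_{0J}}$ with $X^{vw_{0J}}$ (using that $w_0X_{w_0vw_{0J}}\subset G/B$ already surjects onto $X^v\subset G/P_J$) rather than with the full preimage $\pi^{-1}(X^v)=X^v$, obtaining the condition $vw_{0J}\le uw_{0J}$ instead of your $v\le uw_{0J}$, and then deduces the equivalence with $v\le u$ by a direct subword argument rather than citing \cite[Proposition~2.5.1]{BjBr}.
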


\begin{proof} As in the case of complete flags, (1) follows from \cite{BGG}, and (2), (3) from \cite{FMSS}.
For the last part we will use part (4) of Proposition \ref{prop:generalfacts}. 

Let $\pi:G/B\to G/P_J$ be the natural projection. Note first that, by Remark \ref{rem:chowrings}, $\pi^{-1}(X_u)=X_{uw_{0J}}$. On the other hand, by formula (\ref{eq:oS}),
$$
X^v=w_0 X_{w_0vw_{0J}}=w_0\pi(X_{w_0vw_{0J}}).
$$
Then, by Remark \ref{rem:chowrings}, 
$X_u\cap X^v\neq\emptyset$ if and only if $X_{uw_{0J}}\cap w_0X_{w_0vw_{0J}}=X_{uw_{0J}}\cap w_0X^{vw_{0J}}\neq\emptyset$ in $G/B$ , which (by Proposition \ref{prop:generalfacts}) is equivalent to $vw_{0J}\leq uw_{0J}$. We conclude the proof by noting that this is equivalent to  $v\leq u$. 

Note first that, if $w \in W^J$ then,  adding to a reduced expression of $w$ a reduced expression of $w_{0J}$, we obtain, by Lemma \ref{lem:dec}, a reduced expression of $ww_{0J}$. 

Assume first that $v\leq vw_{0J}\leq uw_{0J}$.  
The fact that $v\leq uw_{0J}$ implies that the reduced expression of $uw_{0J}$ contains a reduced expression of $v$. But the last element of a reduced expression of $v$ cannot be of the form $s_j$, $j\in J$ by the definition of $W^J$; then it is part of the reduced expression of $u$. This implies that $v\leq u$. Conversely, if $v\leq u$, then clearly $vw_{0J} \le uw_{0J}$.
\end{proof}	

While the coefficients $c_{uv}^w$ appearing in Proposition  \ref{prop:generalfactsGP} (3)  do not depend only on $W^J$, 
 Proposition \ref{prop:generalfactsGP} (4)  implies that the nonvanishing of the product $ [X_u]\cdot [X^v]$ is determined by the Bruhat order on $W^J$; in particular $\ed(G/P_J)$ depends only on the poset $W^J$: 

\begin{lemma}\label{lem:toprove2}
The variety $G/P_J$ has effective good divisibility up to degree   $s$ if and only if for every two elements $u,v\in W^J$ such that $\ell(v)+c^J(u) = s$ we have $v \le u$.
\end{lemma}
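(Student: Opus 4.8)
The plan is to carry over, almost verbatim, the argument used for Lemma \ref{lem:toprove}, replacing Proposition \ref{prop:generalfacts} by its counterpart Proposition \ref{prop:generalfactsGP} and keeping careful track of the fact that in $G/P_J$ the relevant degree function on Schubert cycles $[X_w]$ ($w\in W^J$) is the codimension $c^J(w)$, while that of the opposite Schubert cycles $[X^v]$ ($v\in W^J$) is $\ell(v)$ (Remark \ref{rem:chowrings}). We may clearly assume $s\le \dim G/P_J$, since for larger $s$ both sides of the asserted equivalence fail (no variety has effective good divisibility above its dimension, e.g.\ by multiplying a point class with a power of an ample divisor).

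For the ``only if'' direction, suppose the Bruhat condition fails: there are $u,v\in W^J$ with $\ell(v)+c^J(u)=s$ and $v\not\le u$. Then $[X_u]\in\AAA^{c^J(u)}(G/P_J)$ and $[X^v]\in\AAA^{\ell(v)}(G/P_J)$ are nonzero effective classes whose codimensions add up to $s\le\dim G/P_J$, and by Proposition \ref{prop:generalfactsGP}(4) their product vanishes, so $G/P_J$ does not have effective good divisibility up to degree $s$. For the ``if'' direction, suppose the Bruhat condition holds and take effective $x_i\in\AAA^i(G/P_J)$, $x_j\in\AAA^j(G/P_J)$ with $i+j=s$ and $x_ix_j=0$. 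By Proposition \ref{prop:generalfactsGP}(1)--(2), write $x_i=\sum_v b_v[X^v]$ (sum over $v\in W^J$ with $\ell(v)=i$) and $x_j=\sum_u a_u[X_u]$ (sum over $u\in W^J$ with $c^J(u)=j$), with all $a_u,b_v\ge 0$; here we use that the classes $[X^v]$ of codimension $i$ form the same basis of $\AAA^i(G/P_J)$ as the classes $[X_w]$ of codimension $i$, via the identification (\ref{eq:updown}). Then $x_ix_j=\sum_{u,v} a_ub_v\,[X_u][X^v]$, and rewriting $[X^v]=[X_{(w_0v)^J}]$ through (\ref{eq:updown}), Proposition \ref{prop:generalfactsGP}(3) shows each $[X_u][X^v]$ is a nonnegative integral combination of Schubert cycles of codimension $s$. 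Since these cycles form a $\Z$-basis and all coefficients in sight are nonnegative, no cancellation is possible, so $x_ix_j=0$ forces $a_ub_v[X_u][X^v]=0$ for every pair $(u,v)$. If $x_i\neq 0$ and $x_j\neq 0$ we could pick $u$ with $a_u>0$ and $v$ with $b_v>0$; then $\ell(v)+c^J(u)=i+j=s\le\dim G/P_J$, the hypothesis gives $v\le u$, and Proposition \ref{prop:generalfactsGP}(4) yields $[X_u][X^v]\neq 0$ -- a contradiction. Hence $x_i=0$ or $x_j=0$.

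The argument is formal once Proposition \ref{prop:generalfactsGP} is available, so the only delicate point -- and the one place where a sign or index slip could creep in -- is the bookkeeping between the two Schubert bases of $\AAA^\bullet(G/P_J)$: effective classes of codimension $i$ should be expanded in the opposite Schubert cycles $\{[X^v]:v\in W^J,\ \ell(v)=i\}$, the factor they are multiplied against in the ordinary cycles $\{[X_u]:u\in W^J,\ c^J(u)=j\}$, so that the hypothesis $\ell(v)+c^J(u)=s$ matches exactly the degrees occurring in the product, and the translation (\ref{eq:updown}) is applied consistently when invoking part (3). No genuinely new difficulty arises beyond what was already settled in Proposition \ref{prop:generalfactsGP}.
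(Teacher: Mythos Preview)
Your proof is correct and follows the same approach as the paper, which treats Lemma~\ref{lem:toprove2} as the direct analogue of Lemma~\ref{lem:toprove} via Proposition~\ref{prop:generalfactsGP}; the paper does not even spell out a separate argument. Your version is simply more detailed, making explicit the choice to expand one factor in the opposite Schubert basis and the other in the ordinary one so that Proposition~\ref{prop:generalfactsGP}(4) applies on the nose, and carefully justifying why no cancellation can occur.
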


In particular, note that, with the standard numbering of the nodes of their Dynkin diagrams, the map sending each reflection $s_i$ of $\DB_n$ to the reflection $s_i$ of $\DC_n$ provides a group isomorphism between the corresponding Weyl groups preserving the Bruhat order.  Hence we get the following:

\begin{corollary}\label{cor:BnCn}
For every $I\subset \Delta$, we have:
$$
\ed(\DB_n(I))=\ed(\DC_n(I)).
$$
\end{corollary}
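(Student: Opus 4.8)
The statement to prove is Corollary \ref{cor:BnCn}: for every $I \subset \Delta$, $\ed(\DB_n(I)) = \ed(\DC_n(I))$.

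The plan is to exploit the observation already made just before the corollary: the map sending each simple reflection $s_i$ of the Weyl group of $\DB_n$ to the simple reflection $s_i$ of the Weyl group of $\DC_n$ (using the standard numbering of nodes) is a group isomorphism $\Psi\colon W(\DB_n) \to W(\DC_n)$. This is a classical fact --- both Weyl groups are the hyperoctahedral group $(\Z/2\Z)^n \rtimes S_n$, and the correspondence of simple reflections respects the Coxeter relations, since the Coxeter graphs of $\DB_n$ and $\DC_n$ coincide (they differ only in the arrow direction, which encodes root lengths, not the group structure). Since a Coxeter group isomorphism that matches up the distinguished generating sets preserves length, it preserves the Bruhat order: indeed $\tau \le w$ holds if and only if some reduced expression of $w$ in the $s_i$ admits a subword that is a reduced expression of $\tau$, and this combinatorial condition is transported verbatim by $\Psi$.

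From there I would argue entirely on the parabolic quotients. Fix $I \subset \Delta$ and set $J = \Delta \setminus I$, so that $\DB_n(I) = G_{\DB}/P_J$ and $\DC_n(I) = G_{\DC}/P_J$ in the notation of the paper. Because $\Psi$ sends $s_i \mapsto s_i$, it carries the subgroup $W_J$ of $W(\DB_n)$ isomorphically onto the subgroup $W_J$ of $W(\DC_n)$, hence (again by length-preservation) it carries $W^J$ bijectively onto $W^J$, and this bijection is an isomorphism of posets for the Bruhat order. Moreover $\Psi$ preserves lengths, so it preserves the longest elements $w_0$, $w_{0J}$, $w_0^J$, and therefore preserves the codimension function $c^J(u) = \ell(w_0^J) - \ell(u^J)$ on $W^J$. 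Now invoke Lemma \ref{lem:toprove2}: $G/P_J$ has effective good divisibility up to degree $s$ if and only if for all $u, v \in W^J$ with $\ell(v) + c^J(u) = s$ one has $v \le u$. Since $\Psi$ identifies the posets $W^J$ together with the functions $\ell$ and $c^J$, the set of integers $s$ for which this condition holds is the same for $\DB_n$ and $\DC_n$; taking the maximum such $s$ (Definition \ref{def:ed}) gives $\ed(\DB_n(I)) = \ed(\DC_n(I))$.

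The only genuine content to nail down --- and the step I would treat most carefully --- is the claim that $\Psi$ preserves the Bruhat order and, relatedly, that it restricts to a poset isomorphism $W^J \to W^J$. Everything else is bookkeeping with Lemma \ref{lem:toprove2}. This point is not really an obstacle: it follows from the purely combinatorial (subword) characterization of the Bruhat order recalled in Section \ref{ss:preliminaries}, together with the fact that an isomorphism of Coxeter systems matching distinguished generators preserves length. I would state it as a short lemma and give the one-line subword argument, then assemble the corollary in a couple of sentences as above.
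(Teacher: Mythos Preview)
Your proposal is correct and is essentially the same argument the paper gives: the paper deduces the corollary directly from Lemma \ref{lem:toprove2} together with the observation (stated just before the corollary) that $s_i\mapsto s_i$ defines a Bruhat-order-preserving isomorphism between the Weyl groups of $\DB_n$ and $\DC_n$. Your write-up spells out in more detail why this isomorphism respects $W^J$, $\ell$, and $c^J$, but the route is identical.
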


\begin{remark}\label{rem:Kleiman} 
The equality in Proposition \ref{prop:generalfactsGP} (3) is known as the {\it Littlewood--Richardson rule}, since it is a generalization of the classical formula of the decomposition of the product of two Schur functions. Given an md-pair in a rational homogeneous variety, the nonnegativity of the coefficients in that rule (which essentially follows from item (2) in the Proposition)  allows us to find an md-pair consisting of two Schubert cycles. Then, by \cite[Theorem~1.7]{3264}, we may claim that these two Schubert cycles are represented by two disjoint subvarieties. 
\end{remark}

\section{Effective good divisibility of  flag manifolds}

This Section is devoted to the proof of  Theorem \ref{thm:1} which, in view of Corollary \ref{cor:BnCn}, is reduced to  
the cases $\cD=\DA_n, \DB_n$ or $\DD_n$. 
In Section \ref{ssec:upper} we will show that the effective good divisibility of a complete flag manifold of classical type $\cD$ is smaller than or equal to $\cox(\cD)-1$. In fact, we will provide examples of rational homogeneous varieties of Picard number one for which the effective good divisibility is smaller than or equal to $\cox(\cD)-1$; the bound will then follow from Corollary \ref{cor:pull}. In Section \ref{ssec:lower} we will conclude the proof of the Theorem by showing that the provided bound is the exact value of the effective good divisibility in each case.

\subsection{Upper bounds} 
\label{ssec:upper}

The fact that $\ed(\DA_n(r))=n$ ($=\cox(\DA_n)-1$) for every $r$ has been proven in \cite{NO22}. 
In the following examples we will use some projective geometry arguments to compute upper bounds in the cases of rational homogeneous varieties of Picard number one of types $\DB_n$ and $\DD_n$. 

\begin{example}\label{ex:bnr} \mbox{\boldmath{$\DB_n(r)$}}: Grassmannian of $(r-1)$-dimensional linear spaces contained in the $(2n-1)$-dimensional quadric $\DB_n(1)$.
We  consider the subvariety $\Sigma_p\subset \DB_n(r)$, parametrizing  $(r-1)$-dimensional linear spaces contained in $\DB_n(1)$ and passing through a point $p$ in the quadric $\DB_n(1)$, and the closed subset $\Sigma_H\subset \DB_n(r)$, parametrizing  
linear spaces contained in a smooth hyperplane section $H$ of $\DB_n(1)$ not containing $p$. Clearly $[\Sigma_p]\cdot[\Sigma_H]=0$. Note that $\Sigma_p$ is a Schubert variety in $\DB_n(r)$, while $\Sigma_H$ is not (however $[\Sigma_H]$ is a Schubert cycle).

The variety $\Sigma_p$ is isomorphic to $\DB_{n-1}(r-1)$, and  $\Sigma_H$ parametrizes $(r-1)$-dimensional linear spaces in a $(2n-2)$-dimensional smooth quadric (in particular it is nonempty, but not necessarily irreducible). 
Then the  sum of the codimensions of $\Sigma_p$ and $\Sigma_H$ is
\begin{equation*} 2\dim \DB_n(r) -(\dim \Sigma_p + \dim \Sigma_H)=
r(4n + 1 - 3r) - \dfrac{(r-1)(4n-3r)}{2} -\dfrac{r(4n - 1 - 3r)}{2} = 2n.
\end{equation*}
We have thus shown that $\ed(\DB_n(r)) \le 2n-1$ for every $r,n$. \qed
\end{example}

\begin{example}\label{ex:dnr} \mbox{\boldmath{$\DD_n(r),\; r\in \{2,\dots ,n-2\}$}}: Grassmannian of $(r-1)$-dimensional linear spaces contained in the $(2n-2)$-dimensional quadric $\DD_n(1)$.  
The same argument as in Example \ref{ex:bnr}  provides here the inequality $\ed(\DD_n(r)) \le 2n-2$ for every $r\leq n-2$, $n\geq 4$. \end{example}

\begin{example}\label{ex:dnsp} \mbox{\boldmath{$\DD_n(r),\; r\in \{1,n-1 ,n\}$}}: 
	the $(2n-2)$-dimensional quadric $\DD_n(1)$ or a Grassmannian of 
	 $(n-1)$-dimensional linear spaces contained in $\DD_n(1)$, classically known as a spinor variety.  

We start with the case $r=1$ observing that the $(2n-2)$-dimensional quadric $\DD_n(1)$ contains two disjoint linear subspaces of dimension $n-1$, therefore we get $\ed(\DD_n(1)) \le 2n-3$. Note that these two linear subspaces belong to the same class in $\AAA^{\bullet}(\DD_n(1))$ if $n$ is even, and to different classes if $n$ is odd (cf. Remark \ref{rem:updown}). 

For the remaining cases we note first that $\DD_n(n-1)\simeq\DD_n(n)$, so we may assume $r=n$. Given two general points in $\DD_n(1)$, there is no positive dimensional linear space contained in $\DD_n(1)$ passing through them. Hence, if we denote by $\Sigma_p$ the subvariety of $\DD_n(n)$ parametrizing  $(n-1)$-dimensional linear spaces contained in $\DD_n(1)$ and passing through a point $p$, we have $[\Sigma_p]\cdot[\Sigma_p]=0$.
The variety $\Sigma_p$ is isomorphic to $\DD_{n-1}(n-1)$, so it has codimension
\[ \dim \DD_n(n) -\dim \DD_{n-1}(n-1) = 
\dfrac{n(n-1)}{2} -\dfrac{(n-1)(n-2)}{2} = n-1
\]
This shows that $\ed(\DD_n(n-1))=\ed(\DD_n(n)) \le 2n-3$. \qed

\end{example}
We summarize in Table \ref{tab:b1} the bounds obtained in Examples \ref{ex:bnr}, \ref{ex:dnr} and \ref{ex:dnsp}. 
 
\begin{table}[h!]
\caption{Bounds on $\ed(\cD(r))$.}
\centering
\begin{tabular}{|C||C|C|}
\hline
\cD(r) & \ed(\cD(r))&\\
\hline
\DA_n(r) &  = n&\\
\hline
\DB_n(r) /\DC_n(r)&  \le 2n-1&\\
\hline
\DD_n(r)  &\le 2n-3 & r=1,n-1,n\\
\hline
\DD_n(r)  &\le 2n-2 & r\not =1,n-1,n\\
\hline
\end{tabular}
\label{tab:b1}
\end{table}

Together with Corollary \ref{cor:pull} and Table \ref{tab:cox}, this provides the following:

\begin{corollary}\label{cor:edle} Let $G/B$ be a complete flag manifold of classical type $\cD$. Then $\ed(G/B) \le \cox(\cD) -1$.
\end{corollary}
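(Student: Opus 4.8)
The plan is to deduce the bound directly from the monotonicity in Corollary~\ref{cor:pull}, the Picard-number-one bounds of Examples~\ref{ex:bnr}, \ref{ex:dnr} and \ref{ex:dnsp} (summarized in Table~\ref{tab:b1}), and the list of Coxeter numbers in Table~\ref{tab:cox}. Concretely, $G/B=G/P_\emptyset=\cD(\Delta)$ is dominated by every $\cD(\Delta\setminus I)=G/P_I$ via a smooth contraction; applying Corollary~\ref{cor:pull} with $\emptyset\subset I\subset\Delta$ gives
\[
\ed(G/B)\le\ed(G/P_I)\qquad\text{for every }I\subset\Delta.
\]
Hence it suffices to find, for each classical type $\cD$, a single node $r$ for which the Picard-number-one variety $\cD(r)$ satisfies $\ed(\cD(r))\le\cox(\cD)-1$.

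I would then go through the four types. For $\cD=\DA_n$: any $r$ works, since $\ed(\DA_n(r))=n=\cox(\DA_n)-1$ by \cite{NO22}. For $\cD=\DB_n$, and hence also for $\cD=\DC_n$ by Corollary~\ref{cor:BnCn}: any $r$ works, since Example~\ref{ex:bnr} yields $\ed(\DB_n(r))\le 2n-1=\cox(\DB_n)-1=\cox(\DC_n)-1$. For $\cD=\DD_n$ the generic bound $\ed(\DD_n(r))\le 2n-2$ of Example~\ref{ex:dnr} is one unit too weak, because $\cox(\DD_n)-1=2n-3$; instead I would take $r\in\{1,n-1,n\}$ and use the sharper bound $\ed(\DD_n(r))\le 2n-3$ of Example~\ref{ex:dnsp}, coming from the pair of skew $(n-1)$-planes inside the quadric $\DD_n(1)$ when $r=1$, and from a Schubert cycle with vanishing self-intersection on the spinor variety when $r=n-1$ or $r=n$. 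In every case $\ed(\cD(r))\le\cox(\cD)-1$, so the displayed inequality gives $\ed(G/B)\le\ed(\cD(r))\le\cox(\cD)-1$.

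The truth is that there is hardly any obstacle left: all the geometric content has already been extracted in Section~\ref{ssec:upper}. The one point demanding attention is the bookkeeping with the Coxeter numbers in type $\DD_n$, where one must not pick an interior node but one of the three extremal nodes $1,n-1,n$, since only for those does the special projective geometry of the even-dimensional quadric sharpen the bound from $2n-2$ to $2n-3=\cox(\DD_n)-1$.
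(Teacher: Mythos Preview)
Your proof is correct and follows exactly the same approach as the paper, which simply invokes Corollary~\ref{cor:pull} together with Tables~\ref{tab:cox} and~\ref{tab:b1}. One tiny slip of wording: $G/B$ \emph{dominates} each $G/P_I$ (the contraction goes $G/B\to G/P_I$), not the other way around, but your application of Corollary~\ref{cor:pull} and the resulting inequality $\ed(G/B)\le\ed(G/P_I)$ are correct.
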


\subsection{Proof of Theorem \ref{thm:1}}\label{ssec:lower}

To show that $\ed(G/B) \ge \cox(\cD) -1$ we will use an inductive argument. We set $J:= \Delta \setminus \{1\}$, and denote by $\cD_J$ the subdiagram of $\cD$ supported on the nodes indexed by $J$. A fundamental ingredient in the proof will be a description of some reduced expressions for the elements of maximal length in $W^J$ provided by Stumbo in \cite{Stumbo}, that we recall here. 

\begin{example}\label{ex:projquad}
We consider the case of rational homogeneous varieties $\cD(1)$, where $\cD=\DA_n,\DB_n,\DC_n,\DD_n$ is a Dynkin diagram of classical type, and set $J:= \Delta \setminus \{1\}$.  In \cite[Theorems 2,4 and 6]{Stumbo} (note that in \cite{Stumbo} the nodes are numbered differently) it has been shown that 
the maximal elements $w_0^J \in W^J$ have the reduced expressions listed in Table \ref{tab:redexp}.

\begin{table}[h!!]
\caption{Reduced expressions of $w_0^J$.\label{tab:redexp}}
\centering
\begin{tabular}{|C||C|}
\hline
\cD & w_0^J\\
\hline
\DA_n &  s_ns_{n-1}\dots s_{2}s_1\\
\hline
\DB_n / \DC_n&  s_1s_2\dots s_{n-1}s_ns_{n-1} \dots s_2s_1\\
\hline
\DD_n  &s_1s_2\dots s_{n-2}s_ns_{n-1} \dots s_2s_1=s_1s_2\dots s_{n-2}s_{n-1}s_{n} \dots s_2s_1 \\
\hline
\end{tabular}
\end{table}

Moreover, Stumbo shows that every element in $W^J$ has a reduced expression which is a {\it right} substring of the above reduced expressions of $w_0^J$. Note that, in the $\DD_n$ case, the fact that $s_{n-1}s_n=s_ns_{n-1}$, implies that, for $m\neq n-1$:
$$
(s_1s_2\dots s_{n-2}s_ns_{n-1} \dots s_2s_1)\rig{m}=(s_1s_2\dots s_{n-2}s_{n-1}s_{n} \dots s_2s_1)\rig{m}. 
$$
Then we may describe some generators of $\AAA^\bullet(\cD(1))$ as follows:
\begin{align}
\AAA^{m}(\cD(1))&=\Z [X_{w_0^J\rig{m}}] \hspace{2.845cm} \mbox{for }\cD=\DA_n,\DB_n,\DC_n ,\  \quad \nonumber\\
\AAA^{m}(\DD_n(1))&= \begin{cases}\Z [X_{w_0^J\rig{m}}] &\qquad\qquad  \mbox{for }m \neq n-1, \\[4pt]
\Z ([X_{w_\alpha}],[X_{w_\beta}]) &\qquad\qquad  \mbox{for }m = n-1,\end{cases}
 \nonumber
\end{align}
where
\[w_\alpha := s_{n-1}s_{n-2}\dots s_2s_1, \qquad \qquad  w_\beta:= s_{n}s_{n-2}\dots s_2s_1.
\]
\end{example}

The following statement is a straightforward consequence of Example \ref{ex:projquad}:

\begin{lemma}\label{lem:strem}  The following statements hold:
\begin{enumerate}
\item[(0)] $\ell(w_0^J)$ is equal to $\cox(\cD)-1$ if $\cD \not = \DD_n$, and to $\cox(\cD)$ if $\cD=\DD_n$;
\item[(1)] if $v\in W$ has length one, 
 then $v \le w_0^J$; 
\item[(2)] in cases $\DB_n/\DD_n$, if $v\in W$ has length two, 
then
\begin{enumerate} 
\item $v \le w_0^J$, and
\item  $v \le w_0^J\rig{1}$, unless $v=s_1s_2$;
\end{enumerate}
\item[(3)]
 if $v^J, u^J \in W^J$ with $\ell(v^J) \le \ell(u^J)$, then $v^J \le u^J$ except in case $\DD_n$ when $\{v^J,u^J\}= \{w_\alpha, w_\beta\}$. 
\end{enumerate}
\end{lemma}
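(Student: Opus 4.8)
The plan is to derive Lemma \ref{lem:strem} directly from the explicit reduced expressions for $w_0^J$ recorded in Example \ref{ex:projquad}, using only elementary manipulations of subwords and the combinatorial definition of the Bruhat order (a reduced expression of $\tau$ appears as a substring of some, equivalently any, reduced expression of $w$). The key recurring observation is that every element of $W^J$ has a reduced expression which is a \emph{right} substring of the displayed reduced expression of $w_0^J$; this is what makes the verifications essentially a matter of reading off the expressions in Table \ref{tab:redexp}.

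\textbf{Step (0).} I would simply count the number of simple reflections in each reduced expression in Table \ref{tab:redexp} and compare with Table \ref{tab:cox}: for $\DA_n$ we get $n = h(\DA_n)-1$; for $\DB_n/\DC_n$ we get $(n) + (n-1) = 2n-1 = h(\DB_n)-1$; for $\DD_n$ we get $(n-1) + (n-1) = 2n-2 = h(\DD_n)$. Since each displayed word is reduced, its length equals $\ell(w_0^J)$, giving (0).

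\textbf{Steps (1) and (2).} For (1), a length-one element is some $s_i$; I would exhibit $s_i$ as a single letter occurring in the reduced word for $w_0^J$ — in every type $\DA_n$, $\DB_n/\DC_n$, $\DD_n$ all indices $1,\dots,n$ appear — hence $s_i \le w_0^J$. For (2), in types $\DB_n$ and $\DD_n$ a length-two element is $s_is_j$ with $i\ne j$ (the case $i=j$ being non-reduced, hence irrelevant, or $s_is_j$ with $s_i,s_j$ possibly commuting). Part (2a): I would locate a length-two subword of $w_0^J$ equal to $s_is_j$; because the reduced expression of $w_0^J$ in these two types is ``palindromic'' — it reads $s_1s_2\cdots s_{n-1}s_n s_{n-1}\cdots s_2 s_1$ in type $\DB_n$ and $s_1s_2\cdots s_{n-2}s_{n-1}s_n\cdots s_2s_1$ (or its $s_{n-1}\leftrightarrow s_n$ variant) in type $\DD_n$ — for any ordered pair $(i,j)$ of distinct indices one can pick one occurrence of $s_i$ from the ascending part and one occurrence of $s_j$ from the descending part (or vice versa) in the correct order, yielding $s_is_j$ as a substring. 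Part (2b): $w_0^J\lceil 1\rceil = s_2 s_3 \cdots s_{n-1} s_n s_{n-1} \cdots s_2 s_1$ (resp.\ the $\DD_n$ analogue obtained by deleting the leading $s_1$); the only ordered length-two pair that cannot be realized as a subword is the one requiring the first letter to be $s_1$ and the second to be $s_2$, i.e.\ $v = s_1 s_2$, because after deleting the initial $s_1$ the remaining occurrence of $s_1$ sits at the very end, after all $s_2$'s. Every other $v = s_is_j$ of length two can still be extracted — this is the content I would spell out by a short case inspection.

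\textbf{Step (3).} This is the part requiring the most care, and I expect it to be the main obstacle. Outside type $\DD_n$ (and in type $\DD_n$ away from degree $n-1$), Example \ref{ex:projquad} gives that $\AAA^m(\cD(1))$ is generated by the single class $[X_{w_0^J\lceil m\rceil \, \text{tail}}]$; more precisely each $W^J$-element of length $m$ is \emph{the same} element $w_0^J\rceil{\ell(w_0^J)-m}\lceil$, the unique right substring of the given length, so for $\ell(v^J) \le \ell(u^J)$ the shorter one is literally a terminal substring of the longer one, giving $v^J \le u^J$ at once. The subtlety is type $\DD_n$: here $w_0^J$ has \emph{two} distinct reduced expressions (interchanging $s_{n-1}$ and $s_n$), and at degree $n-1$ the group $\AAA^{n-1}(\DD_n(1))$ has rank two, spanned by $[X_{w_\alpha}]$ and $[X_{w_\beta}]$ with $w_\alpha = s_{n-1}s_{n-2}\cdots s_1$ and $w_\beta = s_n s_{n-2}\cdots s_1$. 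For $\ell(v^J)<\ell(u^J)$ one checks, using the identity $(s_1\cdots s_{n-2}s_n s_{n-1}\cdots s_1)\rceil m\lceil = (s_1\cdots s_{n-2}s_{n-1}s_n\cdots s_1)\rceil m\lceil$ for $m\ne n-1$ noted in Example \ref{ex:projquad}, that any right substring of length $<n-1$ is a terminal substring of a right substring of length $\ge n-1$, and any right substring of length $>n-1$ contains both $w_\alpha$ and $w_\beta$ as substrings (since it must contain one of $s_{n-1},s_n$ from the ``middle'' together with the tail $s_{n-2}\cdots s_1$, and the two reduced forms of $w_0^J$ let us realize whichever of $s_{n-1},s_n$ we need); hence comparability holds whenever the lengths are unequal, or equal but the two elements are not the pair $\{w_\alpha,w_\beta\}$. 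The genuinely incomparable case is exactly $\{v^J,u^J\} = \{w_\alpha,w_\beta\}$: neither $w_\alpha \le w_\beta$ nor $w_\beta \le w_\alpha$, since a reduced word for $w_\alpha$ uses $s_{n-1}$ and not $s_n$ while a reduced word for $w_\beta$ uses $s_n$ and not $s_{n-1}$, and these are all the reduced words for these two elements (their supports are fixed). I would present step (3) by first disposing of the non-$\DD_n$ and off-degree cases in one line, then carefully handling the three sub-cases $\ell(v^J)<\ell(u^J)$, $\ell(v^J)=\ell(u^J)<n-1$ or $>n-1$ (forcing $v^J=u^J$), and $\ell(v^J)=\ell(u^J)=n-1$ (where either they coincide or form the excluded pair).
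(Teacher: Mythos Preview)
Your proposal is correct and follows the same approach as the paper: a direct case-by-case inspection of right substrings of the explicit reduced expressions for $w_0^J$ listed in Table~\ref{tab:redexp}, together with the Bruhat-order subword criterion. The paper's proof is much terser than yours---it simply asserts the case analysis and records the one nonobvious point for (2b), namely that $s_1s_i=s_is_1$ for $i>2$, which is exactly the commutation underlying your observation that the unique remaining $s_1$ in $w_0^J\rig{1}$ sits after all the $s_2$'s.
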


\begin{proof}
Using the results quoted in Example \ref{ex:projquad}, the proof goes through a case-by-case analysis of the right substrings of the elements $w_0^J$. We just  note that, in cases $\DB_n/\DD_n$, $w_0^J\rig{1}$ obviously contains every possible reduced expression of length two of the form $s_is_j$ with $i>1$; since $s_1s_i=s_is_1$ for $i>2$, then the result follows. 
\end{proof}

\begin{corollary}\label{lem:J}  If $v,u \in W$ satisfy that $\ell(v) +c(u) =  \cox(\cD)-1$,  
then $v^J \le u^J$.\end{corollary}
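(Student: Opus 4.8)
The plan is to reduce the Bruhat inequality $v^J\le u^J$ to the length inequality $\ell(v^J)\le\ell(u^J)$ via Lemma~\ref{lem:strem}(3), and then to check that the only configuration left open by that lemma is incompatible with the hypothesis.

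First I would decompose both sides of the equality $\ell(v)+c(u)=\cox(\cD)-1$ along $J=\Delta\setminus\{1\}$. By Lemma~\ref{lem:dec}, $\ell(v^J)\le\ell(v^J)+\ell(v_J)=\ell(v)$, and by (\ref{eq:cod}), $c(u)=c^J(u)+c_J(u)\ge c^J(u)$, so that $\ell(v^J)\le\ell(v)=\cox(\cD)-1-c(u)\le\cox(\cD)-1-c^J(u)$. On the other hand, by Notation~\ref{not:codim}, $\ell(u^J)=\ell(w_0^J)-c^J(u)$, and by Lemma~\ref{lem:strem}(0), $\ell(w_0^J)\ge\cox(\cD)-1$ in all classical types (with equality unless $\cD=\DD_n$, in which case $\ell(w_0^J)=\cox(\cD)=2n-2$). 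Hence $\ell(u^J)\ge\cox(\cD)-1-c^J(u)\ge\ell(v^J)$.

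At this point Lemma~\ref{lem:strem}(3) gives $v^J\le u^J$ unless $\cD=\DD_n$ and $\{v^J,u^J\}=\{w_\alpha,w_\beta\}$. The remaining step is to exclude this possibility: there $\ell(u^J)=\ell(w_\alpha)=\ell(w_\beta)=n-1$ and $\ell(w_0^J)=2n-2$, so $c^J(u)=\ell(w_0^J)-\ell(u^J)=n-1$, and the bound above then gives $\ell(v^J)\le\cox(\DD_n)-1-c^J(u)=(2n-2)-1-(n-1)=n-2$, contradicting $\ell(v^J)=n-1$. So the exceptional case never occurs and $v^J\le u^J$ in all cases.

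I do not expect a genuine difficulty; the content is entirely the bookkeeping with the decompositions $\ell(v)=\ell(v^J)+\ell(v_J)$ and $c(u)=c^J(u)+c_J(u)$. The only point that needs attention is the off-by-one in type $\DD_n$, where $\ell(w_0^J)=\cox(\cD)$ instead of $\cox(\cD)-1$; I would make sure that the slack $c_J(u)\ge0$ (equivalently $\ell(v_J)\ge0$) is enough to absorb it, which is precisely what the computation in the previous paragraph verifies.
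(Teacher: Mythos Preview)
Your proof is correct and follows essentially the same approach as the paper: both use the decompositions $\ell(v)=\ell(v^J)+\ell(v_J)$ and $c(u)=c^J(u)+c_J(u)$ together with Lemma~\ref{lem:strem}(0) to bound $\ell(v^J)\le\ell(u^J)$, and then invoke Lemma~\ref{lem:strem}(3). The only cosmetic difference is that the paper splits into the cases $\cD\ne\DD_n$ and $\cD=\DD_n$ from the outset, obtaining $\ell(v^J)\le\ell(u^J)-1$ directly in type $\DD_n$, whereas you first prove $\ell(v^J)\le\ell(u^J)$ uniformly and then rule out the exceptional configuration by the same off-by-one computation.
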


\begin{proof}
Using formula (\ref{eq:cod}), the assumption  $\ell(v) +c(u) =  \cox(\cD)-1$ and item (0) in Lemma \ref{lem:strem}, 
we can write:
\begin{align}\nonumber 
 \ell(v^J)  \le\ell(v) &=  \ell(w_0^J) -c(u) \le \ell(w_0^J)-c^J(u) =      \ell(u^J) & \cD & \not = \DD_n \\ \nonumber
 \ell(v^J)  \le\ell(v) &=  \ell(w_0^J) -c(u)-1 \le \ell(w_0^J)-c^J(u) -1=      \ell(u^J)-1 & \cD & = \DD_n  
\end{align}
so the result follows from Lemma \ref{lem:strem} (3).
\end{proof}

\begin{theorem}\label{main1}
A complete flag manifold of classical type $\cD$ has  effective good divisibility up to degree $\cox(\cD)-1$.
\end{theorem}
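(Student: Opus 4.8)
The goal is to show that for the complete flag manifold $G/B$ of classical type $\cD$, effective good divisibility holds up to degree $s = \cox(\cD)-1$. By Lemma \ref{lem:toprove}, this amounts to proving that for every pair $u,v \in W$ with $\ell(v) + c(u) = \cox(\cD)-1$ we have $v \le u$ in the Bruhat order. The strategy is induction on the rank $n$ of $\cD$, peeling off the node $1$: set $J := \Delta \setminus \{1\}$, so that $\cD_J$ is a diagram of classical type (of type $\DA_{n-1}$, $\DB_{n-1}/\DC_{n-1}$, or $\DD_{n-1}$, or a disjoint union involving $\DA_1$ in the $\DD_n$ case) of strictly smaller rank, for which we may assume the statement holds.

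\smallskip

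**Key steps.** First I would use the decomposition of Lemma \ref{lem:dec}, writing $u = u^J u_J$ and $v = v^J v_J$ with $\ell(u) = \ell(u^J)+\ell(u_J)$ and likewise for $v$, and splitting $c(u) = c^J(u) + c_J(u)$ via formula (\ref{eq:cod}). The plan is to reduce the single Bruhat inequality $v \le u$ in $W$ to two separate inequalities: the ``$W^J$-part'' $v^J \le u^J$, which is exactly the content of Corollary \ref{lem:J} and is already available from Stumbo's explicit reduced expressions (Example \ref{ex:projquad}, Lemma \ref{lem:strem}); and the ``$W_J$-part'' $v_J \le u_J$, which should follow from the inductive hypothesis applied to $\cD_J$ — one needs $\ell(v_J) + c_J(u) \le \cox(\cD_J)-1$, i.e. that the ``budget'' $\ell(v)+c(u)$ left over for the $J$-part does not exceed the smaller Coxeter number. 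Here the numerology of Table \ref{tab:cox} is essential: $\cox(\DA_n)-1 - (\cox(\DA_{n-1})-1) = 1$, $\cox(\DB_n)-1-(\cox(\DB_{n-1})-1) = 2$, and similarly for $\DD_n$; these ``gaps'' of $1$ or $2$ are precisely what is absorbed by the length of $w_0^J$ (item (0) of Lemma \ref{lem:strem}), since $\ell(w_0^J) = c^J(u) + \ell(u^J)$ measures how much codimension/length the transition between $G/B$ and $G/P_J$ accounts for. Finally I would assemble: given $v^J \le u^J$ and $v_J \le u_J$ (the latter as elements of the parabolic $W_J$, whose Bruhat order is induced from $W$), one concatenates reduced expressions to conclude $v = v^J v_J \le u^J u_J = u$, using that appending a reduced word for an element of $W_J$ to a reduced word for an element of $W^J$ stays reduced (as observed in the proof of Proposition \ref{prop:generalfactsGP}).

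\smallskip

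**Main obstacle.** The delicate point is the bookkeeping that shows the leftover length-plus-codimension available for the $J$-part is genuinely bounded by $\cox(\cD_J)-1$, together with the case analysis hidden inside it. In the generic situation $\ell(v^J) \le \ell(u^J)$ and one gains the full $\ell(w_0^J)$; but one must handle the boundary cases where $v^J$ and $u^J$ are forced to a specific small pair — most notably the exceptional pair $\{w_\alpha, w_\beta\}$ in type $\DD_n$ of Lemma \ref{lem:strem}(3), where the two candidate Schubert cycles of codimension $n-1$ in $\DD_n(1)$ are Bruhat-incomparable. In that situation $v^J \le u^J$ can fail, and one must instead argue directly that the equality $\ell(v)+c(u) = \cox(\cD)-1$ cannot be realized with $\{v^J, u^J\} = \{w_\alpha, w_\beta\}$ unless the inductive structure still forces $v \le u$ through the $W_J$-part, or one exploits the slack ``$-1$'' appearing in the $\DD_n$ line of the displayed inequalities in the proof of Corollary \ref{lem:J}. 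I expect that handling this $\DD_n$ exception, and checking the base case of the induction (small rank, e.g. $\DA_1, \DB_2, \DD_2\cong\DA_1\times\DA_1$, or wherever Stumbo's description bottoms out), is where the real work lies; the rest is a clean combinatorial assembly.
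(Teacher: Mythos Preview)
Your overall architecture matches the paper's: induction on the rank via $J=\Delta\setminus\{1\}$, the decomposition $v=v^Jv_J$, $u=u^Ju_J$, Corollary \ref{lem:J} for the $W^J$-part, and the inductive hypothesis for the $W_J$-part. However, your identification of the main obstacle is off, and there is a genuine gap in the plan.

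The $\{w_\alpha,w_\beta\}$ exception you single out is a red herring. Corollary \ref{lem:J} already disposes of it: in type $\DD_n$ the displayed inequality gives $\ell(v^J)\le \ell(u^J)-1$, so the equal-length pair $\{w_\alpha,w_\beta\}$ simply cannot occur as $\{v^J,u^J\}$ under the hypothesis $\ell(v)+c(u)=\cox(\cD)-1$. So $v^J\le u^J$ is always available.

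The real gap is the other half. You need $\ell(v_J)+c_J(u)\le \cox(\cD_J)-1$ to invoke induction, and this can fail: take for instance $v=v_J\in W_J$ (so $\ell(v^J)=0$) and $u^J=w_0^J$ (so $c^J(u)=0$); then $\ell(v_J)+c_J(u)=\cox(\cD)-1>\cox(\cD_J)-1$. In that situation your proposed concatenation $v^Jv_J\le u^Ju_J$ has no input on the $W_J$-side. The paper's fix is \emph{not} to squeeze the $W^J$-decomposition harder, but to abandon it and split a reduced word of $v$ differently: write $v=v\lef{m}\,v\rig{m}$ with $m=\cox(\cD)-\cox(\cD_J)\in\{1,2\}$. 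The short prefix $v\lef{m}$ has length at most $2$, and Lemma \ref{lem:strem}(1),(2) (together with the observation that $s_1$ does not occur when $v\in W_J$) shows $v\lef{m}\le u^J$ directly, since in the problematic subcase $u^J$ is forced to be $w_0^J$ or $w_0^J\rig{1}$. The suffix $v\rig{m}$ then satisfies the budget inequality and induction applies. A similar alternative splitting handles the subcase of Case~2 where $v^J=s_1$, $u^J=w_0^J$, and $\ell(v_J)+c_J(u)$ exceeds $\cox(\cD_J)-1$ by one; there one uses $v=v^J\,v_J\lef{1}\,v_J\rig{1}$ and Lemma \ref{lem:strem}(2.a).

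Two minor corrections: removing node $1$ from $\DD_n$ yields the connected diagram $\DD_{n-1}$ (on nodes $2,\dots,n$), not a disjoint union; and the paper takes $\DD_4$ (checked directly) as the base of the $\DD$-induction, since $\DD_3=\DA_3$ and $\DD_2$ is not treated as a genuine $\DD$-type diagram.
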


\begin{proof} As pointed out in Lemma \ref{lem:toprove}, we need to prove that  $v \le u$ for every two elements $u,v\in W$ such that  $\ell(v)+c(u)= h(\cD)-1$. 

We will prove the statement by induction on the number of nodes of $\cD$, starting from the cases $\DA_1, \DB_2$ and $\DD_4$. For $\DA_1$ the statement is clear, and for $\DB_2$ and $\DD_4$ it can be easily checked by listing all the possible words (see Appendix \ref{ssec:div}). Set 
\[m:=\cox(\cD)-\cox(\cD_J)=\begin{cases} 1 & \text{if }\cD=\DA_n,\\
2  &  \text{if }\cD\not=\DA_n.\end{cases}
\]

\medskip
\noindent{\bf Case 1.}  Assume $v=v_J$.\par
\medskip
If $\ell(v_J) +c_J(u) \le \cox(\cD_{J})-1$, then we  use induction to get $v =v_J \le u_J \le u$. \\ 
If $\ell(v_J) +c_J(u) > \cox(\cD_{J})-1$, then we have
\[\cox(\cD)-1= \ell(v_J)+c_J(u)+c^J(u) > \cox(\cD_{J})-1 +c^J(u),
\]
and in particular $c^J(u) \le m-1 \le 1$. Since $u^J=w_0^J\rig{c^J(u)}$, and $s_1 \not \le v$,
by Lemma \ref{lem:strem} (1) and (2), we get $v\lef{m} \le u^J$.

On the other hand, by induction, since 
\[\ell(v\rig{m}) +c_J(u)  \le \ell(v) -m+ c(u)  
=\cox(\cD)-1-m  =\cox(\cD_J)-1,
\]
we get $v\rig{m} \le u_J$, and we can  conclude that $v = v\lef{m}v\rig{m}  \le u^Ju_J =u$.\par
\medskip
\noindent{\bf Case 2.} Assume $v\not=v_J$; in particular $\ell(v_J) \le \ell(v) -1$.\par 
\medskip
If $\ell(v_J) + c_J(u) \le \cox(\cD_{J})-1$, then we conclude that $v_J \le u_J$ by induction. By Corollary \ref{lem:J} in all these cases we have also $v^J \le u^J$, therefore $v \le u$.

If the above inequality is not satisfied, then $m=2$, $\ell(v_J) = \ell(v) -1$ and $c_J(u) =c(u)$, that is,
$\cD \not = \DA_n$, $v^J=s_1$, and $u^J=w_0^J$. 
Since
 \[\ell (v_J\rig{1}) +c_J(u) \le \ell(v)-2 +c(u)=\cox(\cD_J)-1,\]
 we can apply induction to get 
$v_J\rig{1} \le u_J$. 
 
On the other hand, by Lemma \ref{lem:strem} (2) $v^J v_J\lef{1}\le u^J =w_0^J$. We can thus conclude that $v = v^Jv_J\lef{1}v_J\rig{1}  \le u^Ju_J =u$.
\end{proof}

\section{Proof of Theorem \ref{thm:2}}\label{sec:Dn}

In this section we will compute the effective good divisibility of rational homogeneous varieties of classical type and Picard number one. We start by observing that for varieties of type $\DA_n,\DB_n,\DC_n$ and for some varieties of type $\DD_n$ this follows from Theorem \ref{thm:1}.

\begin{corollary}\label{cor:corthm1}
Let $\cD(R)$ be a rational homogeneous variety, with either $\cD=\DA_n,\DB_n,\DC_n$, $R\subset\{1,\dots,n\}$, or $\cD=\DD_n$ and $R\cap\{1,n-1,n\}\neq \emptyset$. Then
$$
\ed(\cD(R))=\cox(\cD)-1.
$$ 
\end{corollary}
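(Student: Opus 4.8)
The plan is to deduce Corollary \ref{cor:corthm1} from the already-established results by sandwiching $\ed(\cD(R))$ between $\cox(\cD)-1$ on both sides. For the lower bound, I would invoke Corollary \ref{cor:pull}, which gives $\ed(\cD(R)) = \ed(G/P_J) \ge \ed(G/B)$ for $J = \Delta\setminus R$; and by Theorem \ref{main1} together with the definition of $\ed$ (Definition \ref{def:ed}), the complete flag manifold of type $\cD$ satisfies $\ed(G/B) \ge \cox(\cD)-1$. So in all the cases covered by the corollary we immediately get $\ed(\cD(R)) \ge \cox(\cD)-1$.

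For the upper bound I would use Lemma \ref{lem:toprove2}: it suffices to exhibit two elements $u, v \in W^J$ with $\ell(v) + c^J(u) = \cox(\cD)$ and $v \not\le u$, which shows that $G/P_J$ fails effective good divisibility in degree $\cox(\cD)$, hence $\ed(\cD(R)) \le \cox(\cD)-1$. The natural source of such a pair is the Picard-number-one examples already worked out: Examples \ref{ex:bnr}, \ref{ex:dnr}, \ref{ex:dnsp} produced explicit disjoint effective cycles on $\cD(r)$ of complementary codimensions summing to $\cox(\cD)$ (e.g. $[\Sigma_p]\cdot[\Sigma_H]=0$ with codimensions summing to $2n$ in the $\DB_n$/$\DC_n$ case). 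Since $\cD(R)$ dominates $\cD(r)$ for any $r \in R$ via a smooth contraction, and $\pi^*$ is an injective ring map preserving effectivity, pulling back such a disjoint pair to $\cD(R)$ keeps it disjoint and keeps the codimensions; this is exactly the mechanism already recorded in Corollary \ref{cor:pull}. The only point needing care is the $\DD_n$ case: here I must use that $R \cap \{1, n-1, n\} \ne \emptyset$, so that $R$ contains at least one node $r \in \{1, n-1, n\}$, and for precisely those nodes Example \ref{ex:dnsp} gives a bound $\ed(\DD_n(r)) \le 2n-3 = \cox(\DD_n)-1$; thus $\ed(\DD_n(R)) \le \ed(\DD_n(r)) \le 2n-3$ by Corollary \ref{cor:pull} (noting the inclusion runs the other way: $\DD_n(R)$ dominates $\DD_n(r)$ since $\Delta\setminus R \subset \Delta\setminus\{r\}$).

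Combining the two bounds gives $\ed(\cD(R)) = \cox(\cD)-1$ in every case listed. In fact, the cleanest packaging is to observe that the upper bounds were already collected in Table \ref{tab:b1} and Corollary \ref{cor:edle}, so the corollary reduces to: (i) the general lower bound $\ed(\cD(R)) \ge \ed(G/B) = \cox(\cD)-1$ from Corollary \ref{cor:pull} and Theorem \ref{main1}; and (ii) matching upper bounds, which for $\DA_n, \DB_n, \DC_n$ come from $\ed(\cD(R)) \le \ed(\cD(r)) \le \cox(\cD)-1$ for any $r \in R$ (Corollary \ref{cor:pull} plus Examples \ref{ex:bnr}, \ref{ex:dnr}), and for $\DD_n$ with $R \cap \{1, n-1, n\} \ne \emptyset$ come from choosing $r$ in that intersection and applying Example \ref{ex:dnsp}.

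The main obstacle, such as it is, is purely bookkeeping: making sure the direction of the inequality in Corollary \ref{cor:pull} is used correctly (more nodes marked $=$ smaller parabolic index set $=$ larger flag variety dominating the smaller one, with $\ed$ monotone in that direction), and isolating exactly why the hypothesis $R \cap \{1,n-1,n\} \ne \emptyset$ is needed in type $\DD_n$ — namely that the sharper bound $2n-3$ is only available from the spinor/quadric examples attached to the nodes $1, n-1, n$, whereas the "generic" nodes $r \in \{2,\dots,n-2\}$ only give $2n-2$; the borderline $\DD_n(R)$ with $R \subset \{2,\dots,n-2\}$ is precisely the exceptional case treated separately in Theorem \ref{thm:2}. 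No genuinely hard argument is required here beyond Theorem \ref{main1} and the explicit examples.
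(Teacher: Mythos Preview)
Your proposal is correct and follows essentially the same argument as the paper: sandwich $\ed(\cD(R))$ between $\cox(\cD)-1$ from below via Corollary \ref{cor:pull} and Theorem \ref{thm:1} (equivalently Theorem \ref{main1}), and from above via Corollary \ref{cor:pull} applied to a suitable $r\in R$ together with the bounds in Table \ref{tab:b1}. The only minor slip is bibliographic: Example \ref{ex:dnr} concerns $\DD_n$, not $\DA_n$ or $\DC_n$; the $\DA_n$ bound is quoted from \cite{NO22} and the $\DC_n$ bound comes via Corollary \ref{cor:BnCn}, but this does not affect the argument.
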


\begin{proof}
As a consequence of the examples presented in Section \ref{ssec:upper} we get that $\ed(\cD(r))\le\cox(\cD)-1$ (see Table \ref{tab:b1}), for some $r\in R$. On the other hand, by Corollary \ref{cor:pull}, and Theorem \ref{thm:1},  in every case
$$
\cox(\cD)-1=\ed(G/B)\leq  \ed (\cD(R))\leq \ed(\cD(r))\le\cox(\cD)-1.
$$
This concludes the proof.
\end{proof}

The rest of the section will be devoted to the computation of $\ed(\DD_n(R))$, when $R\cap \{1,n-1,n\}=\emptyset$. The proof will be completed in Section \ref{ssec:proofthm2} below. A fundamental ingredient of the proof will be the fact that the only \edps of Schubert cycles  in $\DD_n(1),\DD_n(n-1)$, and $\DD_n(n)$ are those described in Example \ref{ex:dnsp}, namely the classes of two disjoint linear subspaces of dimension $n-1$ in $\DD_n(1)$, and the class of the subset of  $\DD_n(r)$ ($r=n-1,n$) parametrizing linear subspaces passing by a point of $\DD_n(1)$ (which has zero self-intersection). We start by proving this in the following section.

\subsection{Maximal disjoint pairs in quadrics and spinor varieties} \label{ssec:dn1}

Let $\Delta$ be the set of nodes $\Delta$ of the diagram $\cD=\DD_n$. Along this section we will use the following notation: 
\[J:=\Delta \setminus \{1\},\quad I:=\Delta \setminus \{n\},\quad K:=I \cap J.\]
We also recall from Example \ref{ex:projquad} the elements 
\[
w_\alpha := s_{n-1}s_{n-2}\dots s_2s_1, \qquad w_\beta:= s_{n}s_{n-2}\dots s_2s_1.
\]
of the Weyl group $W$ of $\DD_n$.

\subsubsection{Maximal disjoint pairs in quadrics}\label{sssec:dn1}

Let us consider the smooth $(2n-2)$-dimensional quadric $\cD(1)$. This variety contains two families of linear subspaces of dimension $n-1$, parametrized by the spinor varieties $\cD(n-1)$ and $\cD(n)$. 
Let us  describe the corresponding classes of these linear spaces in $\DA^{n-1}(\cD(1))$:

\begin{lemma}\label{lem:linspquad}
Let $\Lambda_\alpha\in \cD(n)$, $\Lambda_\beta\in \cD(n-1)$ be linear spaces of dimension $n-1$ contained in $\cD(1)$. 
Then
\[
[\Lambda_\alpha] = [X_{w_\alpha}] ,\qquad [\Lambda_\beta] = [X_{w_\beta}] .
\]
\end{lemma}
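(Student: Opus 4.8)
\textbf{Proof proposal for Lemma \ref{lem:linspquad}.}

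The plan is to identify the two spinor families with the two Schubert cycles of codimension $n-1$ that were singled out in Example \ref{ex:projquad}, namely $[X_{w_\alpha}]$ and $[X_{w_\beta}]$, by matching (a) the class in which each linear space lives and (b) the membership of each linear space in one of the two spinor families $\cD(n)$, $\cD(n-1)$. First I would recall from Example \ref{ex:projquad} that $\AAA^{n-1}(\DD_n(1))=\Z([X_{w_\alpha}],[X_{w_\beta}])$, so that an $(n-1)$-dimensional linear space $\Lambda\subset\DD_n(1)$, being effective of codimension $n-1$, has class $a[X_{w_\alpha}]+b[X_{w_\beta}]$ with $a,b\ge 0$; since $\Lambda$ is irreducible of the minimal dimension $n-1$ and the two Schubert varieties $X_{w_\alpha}$, $X_{w_\beta}$ are themselves linear spaces of dimension $n-1$ (they are the closures of the cells $Bw_\alpha B/B$, $Bw_\beta B/B$, whose dimension equals $\ell(w_\alpha)=\ell(w_\beta)=n-1$), the class $[\Lambda]$ must be exactly one of $[X_{w_\alpha}]$, $[X_{w_\beta}]$ — a positive combination cannot represent an irreducible variety of that dimension. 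So the content of the lemma is only to decide which is which, i.e. that the family $\cD(n)$ gives $[X_{w_\alpha}]$ and $\cD(n-1)$ gives $[X_{w_\beta}]$.

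To pin down the correspondence I would use the classical fact (Remark \ref{rem:updown}, to which the excerpt alludes) that the two rulings of a smooth even-dimensional quadric are distinguished by the parity of the dimension of their intersection: two linear spaces of dimension $n-1$ in $\Q^{2n-2}$ belong to the same ruling if and only if the dimension of their intersection is congruent to $n-1$ modulo $2$. On the Schubert side, $X_{w_\alpha}$ and $X_{w_\beta}$ are the two maximal linear subspaces through a fixed point of the quadric corresponding to the reduced words $s_{n-1}s_{n-2}\cdots s_1$ and $s_ns_{n-2}\cdots s_1$; these differ only in the last reflection applied ($s_{n-1}$ versus $s_n$), which is exactly the node distinguishing $\DD_n(n-1)$ from $\DD_n(n)$. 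Concretely, the natural projections $\pi_{n-1}:\DD_n(1)=G/P_{\Delta\setminus\{1\}}\dashrightarrow$ (incidence with) $\DD_n(n-1)$ and $\pi_n:\dashrightarrow \DD_n(n)$ realize: a maximal linear space $\Lambda$ lies in the family $\cD(n)$ precisely when, as a point of $\cD(n)$, it is swept by the Schubert cell whose word ends in $s_{n-1}$ (with our numbering convention), giving $[\Lambda]=[X_{w_\alpha}]$; symmetrically for $\cD(n-1)$ and $w_\beta$. I would make this explicit with the standard basis model of $\Q^{2n-2}\subset\P^{2n-1}$ with quadratic form $\sum x_iy_i$, writing down one representative maximal isotropic subspace in each family and computing its Schubert cell by the standard flag-incidence conditions.

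The main obstacle — and really the only nontrivial point — is bookkeeping the labelling convention: Stumbo's paper \cite{Stumbo} uses a different numbering of the $\DD_n$ nodes than \cite{Hum2}, and the two spinor nodes $n-1$ and $n$ are interchangeable by a diagram automorphism, so one must be careful that the identification $[\Lambda_\alpha]=[X_{w_\alpha}]$ (rather than $[\Lambda_\alpha]=[X_{w_\beta}]$) is the one consistent with the conventions fixed in Example \ref{ex:projquad} and with the definitions $w_\alpha=s_{n-1}s_{n-2}\cdots s_1$, $w_\beta=s_ns_{n-2}\cdots s_1$. I would resolve this by fixing once and for all the convention that $\DD_n(n)$ is the spinor variety reached by the contraction that forgets the node $n-1$ in the complete flag, checking the parity statement on an explicit small case (e.g. $n=4$, where $\Q^6$ and its two $\P^3$-rulings can be verified by hand or with the {\tt SageMath} computation in the Appendix), and then invoking that the whole configuration is rigid under deformation so that the small-case identification propagates. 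Once the labels are aligned, the two displayed equalities $[\Lambda_\alpha]=[X_{w_\alpha}]$ and $[\Lambda_\beta]=[X_{w_\beta}]$ follow immediately from the uniqueness argument of the first paragraph.
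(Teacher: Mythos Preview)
Your approach differs from the paper's, and the gap lies exactly where you anticipated: in deciding which family corresponds to which class. The reduction to two cases is essentially sound --- since the Schubert varieties $X_{w_\alpha}$, $X_{w_\beta}$ are themselves linear $\P^{n-1}$'s in the quadric, a degree count forces $[\Lambda]\in\{[X_{w_\alpha}],[X_{w_\beta}]\}$ (your phrasing ``a positive combination cannot represent an irreducible variety'' is not literally correct in general; what makes it work here is that both generators already have degree one). But what follows is not a proof. The ``rigidity under deformation'' argument is invalid: $\DD_4$ and $\DD_n$ for $n>4$ are not deformations of one another, so a check at $n=4$ establishes nothing for general $n$. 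The parity-of-intersection criterion only tells you the two families give \emph{different} classes, not which is which --- that is precisely the bookkeeping you have not resolved. An explicit coordinate computation would work, but you have only gestured at it; doing it honestly means fixing the Borel, the standard isotropic flag, and the form compatibly with the paper's conventions, and then identifying which $B$-orbit closure corresponds to which word. That is a real computation, not a remark, and it has to be done for arbitrary $n$.

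The paper avoids the labelling issue altogether with a uniform Lie-theoretic argument. A linear space $\Lambda_\alpha\in\cD(n)$ is, by construction, the image in $\cD(1)$ of a fiber of $G/B\to\cD(n)$; this fiber is the Schubert variety $X_{w_{0I}}\subset G/B$ with $I=\Delta\setminus\{n\}$. Pushing down to $\cD(1)$ amounts (Remark \ref{rem:chowrings}) to taking the $(W_I)^K$-part of $w_{0I}$ for $K=I\cap J$. Since $\cD_I=\DA_{n-1}$ and $K$ is the complement of its first node, Example \ref{ex:projquad} gives that part as $s_{n-1}s_{n-2}\cdots s_1=w_\alpha$. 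The node $n$ is removed exactly because we chose the family $\cD(n)$, so $s_{n-1}$ is the reflection that survives, and $w_\alpha$ (not $w_\beta$) comes out automatically --- no case check, no conventions to reconcile.
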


\begin{proof} Let us prove the statement for $\Lambda_\alpha$; the argument for $\Lambda_\beta$ is analogous. This linear space can be seen as the image into $\cD(1)$ of the fiber of the natural map $G/B \to \cD(n)$ over one point. This fiber belongs to the class $[X_v]$ in $\AAA^\bullet(G/B)$ where $v:=w_{0I}$ is the longest element of $W_I$. Note that $W_I$ is the Weyl group of the subdiagram $\cD_I=\DA_{n-1}\subset \cD$, obtained by deleting the $n$-th node. We then write $v=v^Kv_K$ with $v^K\in (W_I)^K$, $v_K\in (W_I)_K=W_K$ (Lemma \ref{lem:dec}) and,  by Remark \ref{rem:chowrings}, $[\Lambda_\alpha]=[X_{v^K}]\in \AAA^\bullet(\cD(1))$.   

  By Remark \ref{rem:declong}, $v_K = w_{0K}$, and $v^K$ is the maximal element of the poset $(W_I)^K$. Since the set $K$ is the complement of the first node in $\cD_I$, by Example \ref{ex:projquad} we get  $v^K= s_{n-1}s_{n-2}\dots s_1$, which is precisely $w_\alpha$. \end{proof}

\begin{lemma}\label{lem:longdn} 
Let $w_0$ be the longest element in the Weyl group $W$ of $\cD$; then
\begin{equation}\label{eq:omegaalpha}
	\begin{cases}
		w_0w_\alpha  = w_\alpha w_0 {\hbox { if }} n {\hbox { is even }}\\
		w_0w_\alpha  = w_\beta w_0 {\hbox { if }} n {\hbox { is odd }}\\
	\end{cases}
\end{equation}
\end{lemma}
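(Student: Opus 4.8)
The plan is to understand how the longest element $w_0$ of $W(\DD_n)$ acts on the simple reflections by conjugation, since both $w_\alpha$ and $w_\beta$ are specific products of simple reflections and the asserted identities are really statements about the diagram automorphism $-w_0$. Recall the standard fact (see \cite[\S 1.8]{Hum3}) that $w_0$ acts on $\Delta$ by $w_0 s_i w_0 = s_{\sigma(i)}$, where $\sigma$ is the automorphism of the Dynkin diagram induced by $-w_0$. For $\cD = \DD_n$ with $n$ even, $-w_0$ is the identity, so $w_0 s_i w_0 = s_i$ for every $i$; for $n$ odd, $-w_0$ is the nontrivial involution of $\DD_n$ that swaps the two fork nodes $n-1$ and $n$ and fixes all the others, so $w_0 s_i w_0 = s_i$ for $i \le n-2$, while $w_0 s_{n-1} w_0 = s_n$ and $w_0 s_n w_0 = s_{n-1}$.

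With this in hand the proof is a direct computation. Write $w_\alpha = s_{n-1} s_{n-2}\cdots s_2 s_1$. In the even case, conjugating termwise gives
\[
w_0 w_\alpha w_0 = (w_0 s_{n-1} w_0)(w_0 s_{n-2} w_0)\cdots (w_0 s_1 w_0) = s_{n-1} s_{n-2}\cdots s_1 = w_\alpha,
\]
using $w_0^2 = e$; multiplying on the right by $w_0$ yields $w_0 w_\alpha = w_\alpha w_0$. In the odd case the only reflection in the expression for $w_\alpha$ that is moved by $\sigma$ is the leading $s_{n-1}$, which becomes $s_n$, so the same termwise conjugation gives
\[
w_0 w_\alpha w_0 = s_n s_{n-2}\cdots s_2 s_1 = w_\beta,
\]
and hence $w_0 w_\alpha = w_\beta w_0$. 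This is exactly \eqref{eq:omegaalpha}.

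I do not expect any real obstacle here; the only point requiring a little care is to cite correctly the description of $-w_0$ as a diagram automorphism for $\DD_n$ (trivial precisely when $n$ is even), which is classical and can be found in \cite{Hum3} or extracted from the explicit model of $W(\DD_n)$ as signed permutations with an even number of sign changes. One could alternatively avoid the general theory and argue directly in that signed-permutation model: $w_\alpha$ and $w_\beta$ are the two minimal-length representatives realizing the two rulings of the quadric, $w_0$ is the signed permutation $i \mapsto -i$ composed with the identity or a single transposition according to the parity of $n$, and a short explicit multiplication confirms the claim. Either route finishes the lemma.
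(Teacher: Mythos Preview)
Your proof is correct and follows essentially the same approach as the paper: both invoke the standard description of conjugation by $w_0$ on simple reflections (the paper cites Bourbaki's Planche IV, you cite \cite{Hum3}) and then conclude by applying it termwise to the reduced expression $w_\alpha = s_{n-1}s_{n-2}\cdots s_1$. The only difference is that you spell out the termwise conjugation explicitly, whereas the paper simply says the result follows from the definitions of $w_\alpha$ and $w_\beta$.
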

\begin{proof}
By item (IX) in \cite[Planche~IV]{Bourb}  the conjugation with $w_0$ in $W$ is the identity if $n$ is even, while it fixes $s_1, \dots, s_{n-2}$ and interchanges $s_{n-1}$ and $s_n$ if $n$ is odd. Then the result follows by definition of $w_\alpha$ and $w_\beta$.
\end{proof}

\begin{remark}\label{rem:updown}
By Equation (\ref{eq:updown}) and Example \ref{ex:projquad},  we have:
\begin{equation}\label{eq:ab}
[X_{w_\alpha}] = [X^{(w_0w_\alpha)^J}]= \begin{cases} [X^{w_\alpha w_0^J}] =  [X^{w_\beta}]&\text{if} \ n \ \text{is even} \\
[ X^{w_\beta w_0^J}] =[X^{w_\alpha}] & \text{if} \ n \ \text{is odd} \end{cases}
\end{equation}
In particular we re-obtain in the language of reduced expressions the well known properties of the intersection of linear subspaces of maximal dimension contained in a smooth $(2n-2)$-dimensional quadric:
\[ [X_{w_\beta}] \cdot  [X_{w_\alpha}]  =
 \begin{cases}  [X_{w_\beta }]\cdot [X^{w_\beta}] \not = 0 &  \text{if } n \ \text{is even}\\
 [X_{w_\beta }]\cdot [X^{w_\alpha}]  = 0 &  \text{if }n \ \text{is odd}\end{cases}
\]
\[[X_{w_\alpha}] \cdot  [X_{w_\alpha}] = 
 \begin{cases}  [X_{w_\alpha }]\cdot [X^{w_\beta}] = 0 &  \text{if } n \ \text{is even}\\
 [X_{w_\alpha}]\cdot [X^{w_\alpha}]  \not = 0 &  \text{if }n \ \text{is odd}\end{cases}
\]
\[[X_{w_\beta}] \cdot  [X_{w_\beta}] = 
 \begin{cases}  [X_{w_\beta }]\cdot [X^{w_\alpha}] = 0 &  \text{if } n \ \text{is even}\\
 [X_{w_\beta}]\cdot [X^{w_\beta}]  \not = 0 &  \text{if }n \ \text{is odd}\end{cases}
\]
\end{remark}

Furthermore we may now describe in our language all the \edps of Schubert cycles in quadrics of even dimension:

\begin{proposition}\label{prop:uniqueinquad}
If $u^J,v^J\in W^J$, then $([X_{u^J}],[X^{v^J}])$ is an \edp in $\cD(1)$ if and only if $\{u^J,v^J\} =\{w_\alpha, w_\beta\}$. 
\end{proposition}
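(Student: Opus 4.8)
The plan is to translate ``md-pair'' via Definition \ref{def:Kleiman} into a codimension condition together with a vanishing condition, and then reduce both to the combinatorics of the Bruhat order on $W^J$ already packaged in Proposition \ref{prop:generalfactsGP}(4) and Lemma \ref{lem:strem}(3). First I would assemble the numerics for $\cD=\DD_n$: since $R=\{1\}$ meets $\{1,n-1,n\}$, Corollary \ref{cor:corthm1} gives $\ed(\cD(1))=\cox(\DD_n)-1=2n-3$; and Lemma \ref{lem:strem}(0) gives $\ell(w_0^J)=\cox(\DD_n)=2n-2$, which equals $\dim\cD(1)$ because $X_{w_0^J}=\cD(1)$. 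Hence, for $u^J,v^J\in W^J$, the Schubert cycles $[X_{u^J}]$ and $[X^{v^J}]$ (which are automatically nonzero and effective) have codimensions $c^J(u^J)=\ell(w_0^J)-\ell(u^J)=2n-2-\ell(u^J)$ and $\ell(v^J)$, so the md-pair codimension requirement $c^J(u^J)+\ell(v^J)=\ed(\cD(1))+1=2n-2$ becomes the single equality $\ell(u^J)=\ell(v^J)$.

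Next I would observe that, once $\ell(u^J)=\ell(v^J)$, we have $\ell(v^J)+c^J(u^J)=2n-2=\dim\cD(1)$, so Proposition \ref{prop:generalfactsGP}(4) applies and gives $[X_{u^J}]\cdot[X^{v^J}]=0$ if and only if $v^J\not\le u^J$. Combining the two steps, $([X_{u^J}],[X^{v^J}])$ is an md-pair in $\cD(1)$ precisely when $\ell(u^J)=\ell(v^J)$ and $v^J\not\le u^J$, and it then remains to identify the pairs in $W^J$ with this property.

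This last point is pure order theory, settled by Lemma \ref{lem:strem}(3). When $\ell(u^J)=\ell(v^J)$, Lemma \ref{lem:strem}(3) forces $v^J\le u^J$ (hence $v^J=u^J$, as comparable elements of equal length coincide in the Bruhat order) unless $\{v^J,u^J\}=\{w_\alpha,w_\beta\}$; so ``$\ell(u^J)=\ell(v^J)$ and $v^J\not\le u^J$'' is equivalent to $\{u^J,v^J\}=\{w_\alpha,w_\beta\}$. Conversely, if $\{u^J,v^J\}=\{w_\alpha,w_\beta\}$ then $\ell(u^J)=\ell(v^J)=n-1$ while $w_\alpha\ne w_\beta$ (they index the two distinct generators of $\AAA^{n-1}(\DD_n(1))$ recorded in Example \ref{ex:projquad}), so neither is $\le$ the other; this yields the md-pair and finishes the argument.

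I do not expect a genuine obstacle: all the geometric content has been isolated beforehand, and the only points needing care are (i) invoking $\ed(\cD(1))=2n-3$ from Corollary \ref{cor:corthm1} (i.e.\ from the already-proven Theorem \ref{thm:1}) rather than from the mere upper bound of Example \ref{ex:dnsp}, and (ii) keeping track that for $\DD_n$ one has $\ell(w_0^J)=\cox(\DD_n)$, not $\cox(\DD_n)-1$ as in the other classical types, throughout the codimension bookkeeping.
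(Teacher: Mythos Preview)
Your proof is correct and follows essentially the same approach as the paper's: reduce the md-pair condition to $v^J\not\le u^J$ via Proposition \ref{prop:generalfactsGP}(4), then invoke Lemma \ref{lem:strem}(3). You are more explicit than the paper about the codimension bookkeeping (deriving $\ell(u^J)=\ell(v^J)$ from $\ed(\cD(1))=2n-3$ and $\ell(w_0^J)=2n-2$) and about the converse direction, whereas the paper simply assumes $\ell(v^J)\le\ell(u^J)$ without loss of generality and leaves the ``if'' direction implicit from the surrounding remarks.
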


\begin{proof} Let $u^J,v^J$ be two elements in $W^J$, and assume, without loss of generality, that $\ell(v^J)\leq \ell(u^J)$.  Recalling Proposition \ref{prop:generalfactsGP} (4), we have to show that $v^J \le u^J$ unless $\{u^J,v^J\} =\{w_\alpha, w_\beta\}$ and this follows from Lemma \ref{lem:strem} (3).
\end{proof}

Pulling back an \edp of  Schubert cycles in $\cD(1)$, we get an \edp of Schubert cycles in $G/B$. By Proposition \ref{prop:uniqueinquad} these \edps can be characterized as follows:

\begin{lemma}\label{lem:d1}
An \edp$([X_u],[X^v])$ in $G/B=\cD(\Delta)$ is a pullback from $\cD(1)$ if and only if $\{v^J,u^J\}= \{w_\alpha, w_\beta\}$. In particular $\ell(v)=c(u)=n-1$.
\end{lemma}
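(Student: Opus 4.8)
The plan is to deduce the statement from the explicit action of the pullback $\pi^*$ on Schubert bases recorded in Remark \ref{rem:chowrings}, combined with the classification of \edps of Schubert cycles in $\cD(1)$ given by Proposition \ref{prop:uniqueinquad}. Write $\pi\colon G/B\to\cD(1)=G/P_J$ for the natural projection. By Remark \ref{rem:chowrings}, $\pi^*$ is an injective ring homomorphism satisfying $\pi^*[X_w]=[X_{ww_{0J}}]$ for $w\in W^J$, with $\ell(ww_{0J})=\ell(w)+\ell(w_{0J})$, and $\pi^*[X^b]=[X^b]$ for $b\in W^J$. Since $w\mapsto ww_{0J}$ is injective on $W^J$ and $W^J\subset W$, these formulas exhibit $\pi^*$ as the inclusion of part of the Schubert, respectively opposite Schubert, $\Z$-basis of $\AAA^\bullet(\cD(1))$ into the corresponding basis of $\AAA^\bullet(G/B)$; in particular, an equality between a single Schubert basis element of $\AAA^\bullet(G/B)$ and a class in $\im\pi^*$ can be read off coordinatewise.

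For the direct implication, assume $([X_u],[X^v])$ is the pullback of an \edp of Schubert cycles in $\cD(1)$. Writing that \edp with its first member a Schubert cycle $[X_a]$ and its second an opposite Schubert cycle $[X^b]$ (with $a,b\in W^J$), Proposition \ref{prop:uniqueinquad} gives $\{a,b\}=\{w_\alpha,w_\beta\}$, while $[X_u]=\pi^*[X_a]=[X_{aw_{0J}}]$ and $[X^v]=\pi^*[X^b]=[X^b]$. Comparing with the Schubert, respectively opposite Schubert, basis of $\AAA^\bullet(G/B)$ forces $u=aw_{0J}$ and $v=b$; by the uniqueness in Lemma \ref{lem:dec}, the first equality yields $u^J=a$ and $u_J=w_{0J}$, and the second yields $v^J=v=b$ because $b\in W^J$. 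Hence $\{u^J,v^J\}=\{w_\alpha,w_\beta\}$.

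Conversely, let $([X_u],[X^v])$ be an \edp in $G/B$ with $\{u^J,v^J\}=\{w_\alpha,w_\beta\}$. Since $\ell(w_\alpha)=\ell(w_\beta)=n-1$, we get $\ell(u^J)=\ell(v^J)=n-1$, and by Lemma \ref{lem:strem} (0), $\ell(w_0^J)=\cox(\DD_n)=2n-2$, so $c^J(u)=\ell(w_0^J)-\ell(u^J)=n-1$ by Notation \ref{not:codim}. Plugging $c(u)=c^J(u)+c_J(u)$ (formula (\ref{eq:cod})) and $\ell(v)=\ell(v^J)+\ell(v_J)$ into the \edp condition $c(u)+\ell(v)=\ed(G/B)+1$, which equals $\cox(\DD_n)=2n-2$ by Theorem \ref{thm:1}, the condition reduces to $c_J(u)+\ell(v_J)=0$; since both summands are nonnegative, we obtain $\ell(v_J)=0$ and $c_J(u)=\ell(w_{0J})-\ell(u_J)=0$, i.e. $v=v^J\in W^J$ and $u=u^Jw_{0J}$. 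By Remark \ref{rem:chowrings} this means $[X_u]=\pi^*[X_{u^J}]$ and $[X^v]=\pi^*[X^{v^J}]$, and Proposition \ref{prop:uniqueinquad} tells us $([X_{u^J}],[X^{v^J}])$ is an \edp in $\cD(1)$; hence $([X_u],[X^v])$ is a pullback from $\cD(1)$. Finally, $\ell(v)=\ell(v^J)=n-1$, and then $c(u)=2n-2-\ell(v)=n-1$, which gives the last claim.

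The proof is essentially bookkeeping with the decompositions $u=u^Ju_J$, $v=v^Jv_J$ and with the lengths of $w_\alpha$, $w_\beta$, $w_0^J$ and $w_{0J}$. The only step requiring genuine care is the coordinatewise comparison of classes, which hinges on the observation that $\pi^*$ embeds (part of) the Schubert basis of $\AAA^\bullet(\cD(1))$ into the Schubert basis of $\AAA^\bullet(G/B)$; beyond this I do not anticipate a serious obstacle.
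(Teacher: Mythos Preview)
Your proof is correct. The converse direction (the hypothesis $\{u^J,v^J\}=\{w_\alpha,w_\beta\}$ forces the pair to be a pullback) matches the paper's argument essentially verbatim: the same length count gives $c_J(u)=\ell(v_J)=0$, hence $u_J=w_{0J}$ and $v=v^J$; you then quote formula~(\ref{eq:chowrings2}) to see that $X^v$ is a pullback, whereas the paper carries out the explicit computation of $w_0w_\alpha$ via Lemma~\ref{lem:longdn} to reach the same conclusion.

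For the forward direction the paper takes a slightly different route. Rather than invoking Proposition~\ref{prop:uniqueinquad} on the preimage pair in $\cD(1)$, it argues inside $G/B$ via the Bruhat order: the pullback condition gives $u_J=w_{0J}$, so $v_J\le u_J$ is automatic; then the \edp condition $v\not\le u$ forces $v^J\not\le u^J$, and a length chain $\ell(v^J)\le\ell(v)=\ell(w_0^J)-c(u)\le\ell(u^J)$ together with Lemma~\ref{lem:strem}(3) yields $\{v^J,u^J\}=\{w_\alpha,w_\beta\}$. Your approach---reading off $u^J=a$, $v^J=b$ directly from the basis embedding and applying Proposition~\ref{prop:uniqueinquad} to $([X_a],[X^b])$---is more direct and bypasses the Bruhat-order argument, at the cost of the coordinatewise comparison you flag at the end. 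Both routes rest on the same classification of \edps in the quadric, so the difference is one of packaging rather than substance.
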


\begin{proof} 
Note first that if $([X_u],[X^v])$ is an \edp in $G/B$, then $c(u)+\ell(v)=2n-2$ by Theorem \ref{thm:1}. If $\{v^J,u^J\}= \{w_\alpha, w_\beta\}$, then 
$c^J(u)=n-1$, $\ell(v^J)=n-1$.  Then it follows that $c_J(u)=0$ and $\ell(v_J)=0$, i.e. $u_J=w_{0J}$ and $v=v^J$. In particular, by Remark \ref{rem:chowrings}, we get that $X_u$ is a pullback from $\cD(1)$. 
 We  show now that $X^v=w_0X_{w_0v}=w_0X_{w_0v^J}$ is a pullback, as well. In the case $v=w_\alpha$ -- the case of $v=w_\beta$ is analogous -- this follows from (\ref{eq:omegaalpha}), which implies that:
\[ w_0w_\alpha = \begin{cases} w_\alpha w_0 =   w_\alpha w_0^Jw_{0J}=w_\beta w_{0J}& \text{if } n \ \text{is even} \\
w_\beta w_0 = w_\beta w_0^Jw_{0J}=   w_\alpha w_{0J} & \text{if }n \ \text{is odd} \end{cases}
\]
To prove the other implication, up to replacing $u$ and $v$ with $w_0u$, $w_0v$, we can assume that $\ell(v) \le c(u)$. Then,
if the pair $([X_u],[X^v])$ is a pullback we have $v_J  \le u_J = w_{0J}$, so  $[X_u] \cdot [X^v]=0$ implies 
$v^J \not \le u^J$. Since
\[
 \ell(v^J)  \le\ell(v) =  \ell(w_0^J) -c(u) \le \ell(w_0^J)-c^J(u)=      \ell(u^J),
\] 
by Lemma \ref{lem:strem} (3) we must have  $\{v^J,u^J\}= \{w_\alpha, w_\beta\}$.
\end{proof}

\subsubsection{Maximal disjoint pairs in spinor varieties} \label{ssec:dnn}

We will consider now the spinor variety $\cD(n)$ (the case of $\cD(n-1)$ is analogous);  we define
\[\theta_\beta:= s_1 \dots s_{n-3}s_{n-2}s_n=w_\beta^{-1}, \qquad \theta_{\alpha} := s_1 \dots s_{n-3}s_{n-2}s_{n-1}=w_\alpha^{-1},\]
 and, for every $l\le n-1$:
\[\theta_{\alpha}(l):=\theta_{\alpha}\rig{n-l-1}=(\omega_\alpha\lef{l})^{-1},\qquad \theta_{\beta}(l):=\theta_{\beta}\rig{n-l-1}=(\omega_\beta\lef{l})^{-1},\]
which are (reduced) 
right subexpressions of length $l$ of $\theta_{\alpha},\theta_{\beta}$.
Then, as shown in \cite[page 707]{Stumbo}, setting $\gamma=\beta$ if $n$ is even and $\gamma=\alpha$ if $n$ is odd,  every element of $W^I$ has a reduced expression of the form 
\[\theta_\gamma(l_1) \dots \theta_{\beta}(l_{n-3}) \theta_{\alpha}(l_{n-2})\theta_{\beta}(l_{n-1}),\]
where $(l_1, l_2, \dots, l_{n-1})$ is either the sequence $(1,2,\dots,n-1)$, corresponding to $w_0^I$, or a non decreasing $(n-1)$-tuple of nonnegative integers of the form
\begin{equation}\label{eq:WI}
0=l_1=\dots =l_k<l_{k+1}<\dots<l_{n-1}\le n-1.
\end{equation}

Of particular interest  is the element corresponding to the sequence $(0,1, \dots, n-3,n-2)$, that we denote by  $\sigma_\beta$; the class $[X_{\sigma_\beta}]$ is the class of a subvariety $\Sigma_p$ (cf. Example \ref{ex:dnsp}), parametrizing linear spaces of maximal dimension in $\DD_n(1)$ passing  through a point $p$.  In fact, it is the image in $\cD(n)$ of the fiber of $G/B \to \cD(1)$ over  $p$; this fiber belongs to the class $[X_v]$ in $\AAA^\bullet(G/B)$ where $v=w_{0J}$ is the longest element of $W_J$.  Note that $W_J$ is the Weyl group of the subdiagram $\cD_J=\DD_{n-1}\subset \cD$, obtained by deleting the first node of $\cD$.
We write $v=v^Kv_K$ with $v^K\in (W_J)^K$, $v_K\in (W_J)_K=W_K$ (Lemma \ref{lem:dec}). 
By Remark \ref{rem:declong}, $v_K = w_{0K}$;  and $v^K$ is the maximal element of the poset $(W_J)^K$, so it is the element of $(W_J)^K$ corresponding to the sequence $(1, \dots, n-3,n-2)$, hence $v^{K}= \sigma_\beta$.

\begin{lemma} In the Weyl group of $\cD$ we have that
\[
\sigma_\beta= \begin{cases} \theta_\beta w_0^I &  n \ \text{even}\\
\theta_\alpha w_0^I &  n \ \text{odd}
\end{cases}
\]
\end{lemma}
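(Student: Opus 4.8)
The plan is to prove the identity by an explicit manipulation inside $W$: I would write both $w_0^I$ and $\sigma_\beta$ in the normal form recalled from Stumbo's description of $W^I$, and then telescope a short family of commutation relations provided by the Coxeter relations of $\DD_n$. Label the $n-1$ factors of Stumbo's expression so that the $j$-th factor carries $\theta_{\gamma_j}$, where $\gamma_1=\gamma$, $\gamma_{n-1}=\beta$ and consecutive $\gamma_j$'s differ (this is the pattern $\theta_\gamma(l_1)\cdots\theta_\beta(l_{n-3})\theta_\alpha(l_{n-2})\theta_\beta(l_{n-1})$). Then $w_0^I$ corresponds to $(l_1,\dots,l_{n-1})=(1,2,\dots,n-1)$ and $\sigma_\beta$ to $(0,1,\dots,n-2)$, so, using $\theta_{\gamma_1}(0)=e$,
\[
w_0^I=\theta_{\gamma_1}(1)\,\theta_{\gamma_2}(2)\cdots\theta_{\gamma_{n-1}}(n-1),\qquad \sigma_\beta=\theta_{\gamma_2}(1)\,\theta_{\gamma_3}(2)\cdots\theta_{\gamma_{n-1}}(n-2),
\]
and since $\theta_\gamma=\theta_{\gamma_1}$ the assertion is precisely $\theta_{\gamma_1}\cdot w_0^I=\sigma_\beta$.

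The computational core is a family of identities in $W$. Put $p_k:=s_1s_2\cdots s_{n-1-k}$ for $1\le k\le n-1$, so that $p_{n-1}$ is the empty product. First, $\theta_\delta\cdot\theta_\delta(1)=p_1$ for $\delta\in\{\alpha,\beta\}$: indeed $\theta_\delta(1)$ is the last letter of $\theta_\delta$ (namely $s_{n-1}$ if $\delta=\alpha$ and $s_n$ if $\delta=\beta$), which cancels and leaves $s_1\cdots s_{n-2}=p_1$. Second — and this is the point — for every $\delta\in\{\alpha,\beta\}$ and every $1\le k\le n-2$,
\[
p_k\cdot\theta_\delta(k+1)=\theta_\delta(k)\cdot p_{k+1}.
\]
I would prove this in two moves: $\theta_\delta(k+1)$ starts with $s_{n-1-k}$ (independently of $\delta$, since $\delta$ only affects the final letter of $\theta_\delta$), which cancels the last letter of $p_k$, rewriting $p_k\,\theta_\delta(k+1)$ as $p_{k+1}\,\theta_\delta(k)$; and then $\theta_\delta(k)$ commutes with $p_{k+1}$, because the generators occurring in $p_{k+1}$ have indices $\le n-2-k$ while those occurring in $\theta_\delta(k)$ have indices $\ge n-k$, and in the Dynkin diagram of $\DD_n$ no node of index $\le n-2-k$ is joined to one of index $\ge n-k$ when $k\ge1$: the only edges are $\{i,i+1\}$ with $i\le n-3$ and $\{n-2,n-1\},\{n-2,n\}$, and $k\ge1$ both forces a gap of at least two between the two index ranges and keeps the branch node $n-2$ out of $p_{k+1}$.

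Granting these, the conclusion is a telescoping. Starting from $\theta_{\gamma_1}\cdot w_0^I=\theta_{\gamma_1}\theta_{\gamma_1}(1)\cdot\theta_{\gamma_2}(2)\cdots\theta_{\gamma_{n-1}}(n-1)$, replace the leading pair by $p_1$ and then apply $p_k\,\theta_{\gamma_{k+1}}(k+1)=\theta_{\gamma_{k+1}}(k)\,p_{k+1}$ for $k=1,2,\dots,n-2$ in turn; each application slides the $p$-factor one slot to the right while peeling $\theta_{\gamma_2}(1),\theta_{\gamma_3}(2),\dots$ off on its left, and after the last step the factor has become $p_{n-1}=e$ and drops out, leaving $\theta_{\gamma_2}(1)\theta_{\gamma_3}(2)\cdots\theta_{\gamma_{n-1}}(n-2)=\sigma_\beta$. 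I do not expect a real obstacle; the one point to be careful about is that the displayed commutation identity must hold for both $\delta=\alpha$ and $\delta=\beta$, so that the alternation of the types $\gamma_j$ never interferes with the telescoping — which is why I would prove that identity once and for all, uniformly in $\delta$, rather than chasing the alternation through the product. The analogous statement for $\cD(n-1)$ follows from the same computation after applying the automorphism of $\DD_n$ that exchanges the nodes $n-1$ and $n$, i.e. after swapping $\alpha\leftrightarrow\beta$.
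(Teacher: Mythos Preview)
Your proof is correct and follows essentially the same approach as the paper: a direct word computation in $W$ using Stumbo's normal form together with the commutation relations of $\DD_n$. The paper proves the equivalent identity $\theta_\gamma^{-1}\sigma_\beta=w_0^I$ by distributing the letters of $w_\gamma=\theta_\gamma^{-1}$ among the factors of $\sigma_\beta$ (using that $s_i$ commutes with $\theta_\delta(l)$ for $l\le n-1-i$) and then absorbing each $s_i$ via $s_i\theta_\delta(n-1-i)=\theta_\delta(n-i)$, whereas you work from the other side and package the same relations into the telescoping identity $p_k\theta_\delta(k+1)=\theta_\delta(k)p_{k+1}$; the underlying manipulations are the same.
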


\begin{proof}
We will prove that $w_0^I=\theta_\beta^{-1}\sigma_\beta$ if $n$ is even. The proof that $w_0^I=\theta_\alpha^{-1}\sigma_\beta$ if $n$ is odd is similar. Write
\[\theta_\beta^{-1}\sigma_\beta=s_ns_{n-2} \dots s_2 s_1 \theta_\alpha(1) \theta_\beta(2) \dots \theta_{\alpha}(n-3)\theta_{\beta}(n-2) \]
Since $s_i$ commutes with all the reflections in $\theta_\gamma(l)$ if $l \le n-1-i$ we can write
\[\theta_\beta^{-1}\sigma_\beta=s_ns_{n-2} \theta_\alpha(1) s_{n-3}\theta_\beta(2) \dots s_2\theta_{\alpha}(n-3)s_1\theta_{\beta}(n-2) \]
Observing that $s_i\theta_\gamma(n-i-1) = \theta_\gamma(n-i)$ for $i \le n-2$ and that $s_n=\theta_\beta(1)$ we get 
\[\theta_\beta^{-1}\sigma_\beta= \theta_\beta(1)  \theta_\alpha(2) \theta_\beta(3) \dots \theta_{\alpha}(n-2)\theta_{\beta}(n-1) =w_0^I \qedhere \]
\end{proof}

By Lemma \ref{lem:longdn}, for $n$ even we have $ w_0 \theta_\beta=\theta_\beta w_0$ while, for $n$ odd we have $w_0\theta_\beta  = \theta_\alpha w_0$. Therefore, in all cases 
\begin{equation}\label{eq:thetasigma}(w_0\theta_\beta)^I=  (\theta_\gamma w_0^Iw_{0I})^I = \theta_\gamma w_0^I = \sigma_\beta,\end{equation}
hence, by formula (\ref{eq:updown}), $
 [X^{\theta_\beta}]=[X_{\sigma_\beta}]$,
from which we deduce
\[[X_{\sigma_\beta}]\cdot[X_{\sigma_\beta }]  = [X_{\sigma_\beta}]\cdot[X^{\theta_\beta}] =0.\]
The geometric explanation of the above formula is the following: there are no lines, hence no linear spaces, passing through two general points of $\cD(1)$ and contained in it.

The next statement describes some properties of \edps in  $\cD(\Delta)$ that are not pullbacks from $\cD(1)$. We need to exclude here the case $n=4$, for which the statement does not hold, but whose \edps can be computed directly using {\tt SageMath} (cf. Appendix \ref{ssec:ed}). 

\begin{proposition}\label{prop:d2} Let $([X_u], [X^v])$ be an \edp in $\cD(\Delta)$ which is not a pullback from $\cD(1)$, and $n\geq 5$. If  $\ell(v) \le c(u)$ then  $\ell(v)=c(u)$, $c^J(u)=\ell(v^J)=1$, and $([X_{u_J}], [X^{v_J}])$ is an \edp for  $\cD_J(J)$.
\end{proposition}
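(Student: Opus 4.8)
The plan is to prove this by induction on $n$, the base case $n=5$ relying on the explicit computation of the \edps of the flag manifold of $\DD_4$ recorded in Appendix \ref{ssec:ed}; so I fix $n\geq 5$ and assume the statement (and Lemma \ref{lem:d1}) known for $\DD_{n-1}$. Since $\cox(\DD_n)-1=2n-3$, Theorem \ref{thm:1} gives $\ed(\cD(\Delta))=2n-3$, so an \edp $([X_u],[X^v])$ satisfies $c(u)+\ell(v)=2n-2$, and $[X_u]\cdot[X^v]=0$ means $v\not\leq u$ by Proposition \ref{prop:generalfacts}. Writing $u=u^Ju_J$, $v=v^Jv_J$ as in Lemma \ref{lem:dec}, formula (\ref{eq:cod}) yields
\[
\big(c^J(u)+\ell(v^J)\big)+\big(c_J(u)+\ell(v_J)\big)=c(u)+\ell(v)=2n-2 .
\]
In particular $c^J(u)+\ell(v^J)\leq 2n-2=\ed(\DD_n(1))+1$ (using Corollary \ref{cor:corthm1}). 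If this inequality is strict then $[X_{u^J}]\cdot[X^{v^J}]\neq 0$ in $\DD_n(1)$, so $v^J\leq u^J$ by Proposition \ref{prop:generalfactsGP}; if instead $c^J(u)+\ell(v^J)=2n-2$ and $[X_{u^J}]\cdot[X^{v^J}]=0$, then $([X_{u^J}],[X^{v^J}])$ is an \edp in $\DD_n(1)$, so $\{u^J,v^J\}=\{w_\alpha,w_\beta\}$ by Proposition \ref{prop:uniqueinquad}, and Lemma \ref{lem:d1} would make $([X_u],[X^v])$ a pullback from $\DD_n(1)$, against the hypothesis. So in every case $v^J\leq u^J$.

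Next I would use that $v\not\leq u$ while $v^J\leq u^J$: by the subword characterization of the Bruhat order together with Lemma \ref{lem:dec}, concatenating a reduced subword of a reduced expression of $u^J$ spelling $v^J$ with one of $u_J$ spelling $v_J$ would exhibit $v\leq u$; hence $v_J\not\leq u_J$, i.e. $[X_{u_J}]\cdot[X^{v_J}]=0$ in $\cD_J(J)$, the flag manifold of $\DD_{n-1}$. As $\ed(\cD_J(J))=\cox(\DD_{n-1})-1=2n-5$ by Theorem \ref{thm:1}, we get $c_J(u)+\ell(v_J)\geq 2n-4$, so the displayed identity forces $c^J(u)+\ell(v^J)\leq 2$. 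It then remains to pin this number down to $c^J(u)=\ell(v^J)=1$. The main step is to exclude $c^J(u)=0$. Assume $u^J=w_0^J$; then $u=w_0^Ju_J$, $c(u)=c_J(u)$ by (\ref{eq:cod}), and the length equation reads $\ell(v)=\ell(u_J)-(n-1)(n-4)$, which for $n\geq 5$ makes $\ell(u_J)$ large (at least $\ell(v)+4$), while $c^J(u)+\ell(v^J)\leq 2$ gives $v^J\in\{e,s_1,s_2s_1\}$, a short right subword of the Stumbo reduced expression of $w_0^J$ from Example \ref{ex:projquad}.

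I would then show $v\leq w_0^Ju_J=u$ directly from the subword characterization: prefixing the reduced expression of $w_0^J$ to one of $u_J$, the richness of the Stumbo word (every simple reflection occurs, the tail ones twice) together with the fact that $u_J$ is long enough to contain every needed simple reflection allows one to spell a reduced expression of $v$ as a subword — contradicting $v\not\leq u$. This rules out $c^J(u)=0$. Passing to the associated \edp $([X_{w_0v}],[X^{w_0u}])$ (it is an \edp since $[X_{w_0v}]\cdot[X^{w_0u}]=[X^v]\cdot[X_u]=0$ and $c(w_0v)+\ell(w_0u)=\ell(v)+c(u)=2n-2$), whose parabolic data satisfy $c^J(w_0v)=\ell(v^J)$ and $\ell((w_0u)^J)=c^J(u)$ by Remark \ref{rem:declong}, the same exclusion applied to it rules out $\ell(v^J)=0$. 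Since $c^J(u),\ell(v^J)\geq 1$ and $c^J(u)+\ell(v^J)\leq 2$, we conclude $c^J(u)=\ell(v^J)=1$.

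Finally I would conclude. From $c^J(u)=\ell(v^J)=1$ the displayed identity gives $c_J(u)+\ell(v_J)=2n-4=\ed(\cD_J(J))+1$, and as $[X_{u_J}]\cdot[X^{v_J}]=0$, the pair $([X_{u_J}],[X^{v_J}])$ is an \edp for $\cD_J(J)$; note also $\ell(v_J)\leq c_J(u)$ because $\ell(v)\leq c(u)$. By the inductive hypothesis (this Proposition for $\DD_{n-1}$ when $n\geq 6$, together with Lemma \ref{lem:d1} for $\DD_{n-1}$ in the pullback case, and the explicit data of Appendix \ref{ssec:ed} when $n-1=4$), every \edp of the flag manifold of $\DD_{n-1}$ with $\ell\leq c$ in fact has $\ell=c$; hence $\ell(v_J)=c_J(u)=n-2$, and therefore $\ell(v)=\ell(v^J)+\ell(v_J)=n-1=1+c_J(u)=c^J(u)+c_J(u)=c(u)$, which closes the argument. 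The step I expect to be the main obstacle is the exclusion of $c^J(u)=0$: although the dimension bookkeeping reduces it to a purely combinatorial assertion, checking that the Stumbo reduced expression of $w_0^J$ prefixed to a long-enough $u_J$ always realizes a reduced expression of $v$ as a subword requires a careful case analysis on $v^J$ and on the simple reflections occurring in $u_J$.
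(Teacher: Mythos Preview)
Your overall strategy matches the paper's: establish $v^J\le u^J$, deduce $v_J\not\le u_J$ and hence $c^J(u)+\ell(v^J)\le 2$, then pin this down to $(1,1)$. The divergence, and the genuine gap, is in how you exclude $c^J(u)=0$.

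Your proposed argument there does not work. You assert that since $\ell(u_J)=\ell(v)+(n-1)(n-4)$ is large, ``$u_J$ is long enough to contain every needed simple reflection'' and so a reduced expression of $v$ occurs as a subword of $w_0^Ju_J$. But length alone says nothing about which simple reflections appear in $u_J$ or in what order; worse, you have already proved $v_J\not\le u_J$, so no reduced expression of $v_J$ is a subword of any reduced expression of $u_J$. The only way forward is to let the Stumbo word $w_0^J$ absorb a \emph{prefix} of $v_J$ beyond $v^J$, and then show the remaining suffix lies below $u_J$. That last step is the whole difficulty and cannot be read off from length. The paper's mechanism is precisely this: set $m:=2-\ell(v^J)$, observe $v^Jv_J\lef{m}\le w_0^J$ by Lemma~\ref{lem:strem}(2.a), and then show $v_J\rig{m}\le u_J$ by checking that otherwise $([X_{u_J}],[X^{v_J\rig{m}}])$ would be an \edp for $\cD_J(J)$ with $\ell(v_J\rig{m})<c_J(u)$, which the inductive hypothesis (together with Lemma~\ref{lem:d1} for $\cD_J$) forbids. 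This is where the assumption $\ell(v)\le c(u)$ (equivalently $\ell(v)\le n-1$) is actually used.

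This in turn breaks your symmetry argument for excluding $\ell(v^J)=0$. Passing to $([X_{w_0v}],[X^{w_0u}])$ does swap $c^J$ and $\ell^J$ as you say, but it reverses the inequality: for the new pair one has $\ell(v')=c(u)\ge \ell(v)=c(u')$, so the hypothesis $\ell(v')\le c(u')$ fails unless $\ell(v)=c(u)$, which is what you are trying to prove. Since the correct exclusion of $c^J(u)=0$ (as in the paper) genuinely consumes the bound $\ell(v)\le n-1$, you cannot transport it by this swap. The paper instead handles $\ell(v^J)=0$ directly: from $s_1\not\le v$ and $c^J(u)\in\{1,2\}$ one gets $v\lef{m}\le u^J=w_0^J\rig{c^J(u)}$ for $m=3-c^J(u)$ via Lemma~\ref{lem:strem}(2.b), while $\ell(v\rig{m})+c_J(u)=\cox(\cD_J)-1$ gives $v\rig{m}\le u_J$ by Theorem~\ref{thm:1}, forcing $v\le u$.
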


\begin{proof} First of all we notice that, by Lemma \ref{lem:d1} and Lemma \ref{lem:strem} (3), we have $v^J \leq u^J$.
We claim that $c^J(u) + \ell(v^J) \le 2$. If this were not the case, then
\[c_J(u)+\ell(v_J) \leq 2(n-1)-3=\cox(\cD_J)-1,\] 
so that $v_J \le u_J$ by Theorem \ref{thm:1} and Proposition \ref{prop:generalfacts}(4). It would then follow that $v \le u$, against the assumption $[X_u] \cdot [X^v] =0$.\par

We will prove the statement by induction, starting from the case $n=5$, in which the Proposition can be proved by listing all the possible cases (see Appendix \ref{ssec:ed}). 
\par
\medskip
\noindent {\bf Step 1. } Prove that $c^J(u)\neq 0$.\par
\medskip
Assume the contrary. 
Set $m= 2- \ell(v^J)$.  Note that 
\begin{equation}\ell(v_J\rig{m}) = \ell(v) -2 <  c(u)=c_J(u)\label{eq:forindu}\end{equation}
 and 
\begin{equation}\ell(v_J\rig{m}) + c_J(u)=\ell(v)-2+c(u)= 2(n-2). \label{eq:forindu2}\end{equation}

If $[X_{u_J}] \cdot [X^{v_J\rig{m}}]= 0$, then $([X_{u_J}],[X^{v_J\rig{m}}])$ is an \edp for the diagram $\cD_J(J)$. It is not a pullback from the quadric $\cD_J(2)$ otherwise Lemma \ref{lem:d1} would tell us that 
$$n-2=\ell(v_J\rig{m}) =\ell(v_J)-m=  \ell(v) -2,$$ contradicting the assumption $\ell(v) \le n-1$. 
Then we may apply induction on $n$, to claim that $\ell(v_J\rig{m})=c(u_J)=c_J(u)$, which contradicts (\ref{eq:forindu}); we conclude that $[X_{u_J}] \cdot [X^{v_J\rig{m}}]\neq 0$ and so, by Proposition \ref{prop:generalfacts} (4) $v_J\rig{m} \le u_J$.

On the other hand, $v^Jv_J\lef{m}$ has length two hence,  by Lemma \ref{lem:strem} (2.a), we have $v^Jv_J\lef{m} \le u^J = w_0^J$, and so $v=v^Jv_J\lef{m} v_J\rig{m}\le u$, contradicting the fact that $([X_u], [X^v])$ is an \edp.\par
\medskip
\noindent {\bf Step 2.}  Prove that $\ell(v^J) \neq 0$. \par
\medskip
Assume the contrary. In particular  $s_1 \not \le v$. By Step 1, we know that $c^J(u)=1$ or $2$; set $m=3-c^J(u)$ which is equal to $2$ or $1$, respectively. Since $u^J=w_0^J\rig{c^J(u)}$, by the description of $w_0^J$ in Example \ref{ex:projquad} and Lemma \ref{lem:strem} (2.b) we get $v\lef{m} \le u^J$.
On the other hand, we have
 \[\ell(v\rig{m}) +c_J(u) = \ell(v)-m+c_J(u) = \ell(v) + c(u)-3=\cox(\cD_J)-1\]
so, by Theorem \ref{thm:1} and Proposition \ref{prop:generalfacts}(4), we get $v\rig{m} \le u_J$, and conclude that $ v\leq u$, contradicting the fact that $([X_u], [X^v])$ is an md-pair.\par
\medskip
We have thus shown that $c^J(u),\ell(v^J)\neq 0$, so -- by the claim at the beginning of the proof -- necessarily $(c^J(u),\ell(v^J))=(1,1)$. This implies that $v^J=s_1$,  $u^J =w_0^J\rig{1}$, $\ell(v_J)=\ell(v)-1$, and $c_J(u)=c(u)-1$.

By direct inspection of the reduced expressions of the word $u^J =w_0^J\rig{1}$, we have $v^J \le u^J$. If  $[X_{u_J}] \cdot [X^{v_J}]\not =0$ then 
 $v_J  \le u_J$ and $v \le u$, a contradiction. 
 
Hence $[X_{u_J}] \cdot [X^{v_J}]=0$, i.e., $([X_{u_J}], [X^{v_J}])$ is an \edp for $\cD_J(J)$ and, by Lemma \ref{lem:d1} and induction, we get
$\ell(v_J)=c_J(u) = n-2$, so $\ell(v)=c(u)=n-1$. 
\end{proof}

\begin{corollary}\label{cor:disjoint}  If $([X_u],[X^v])$ is an \edp for $\cD(n)$ then $(u,v)=(\sigma_\beta,\theta_\beta)$, or $n=4$ and $(u,v)=(\sigma_\beta,\theta_\beta)$ or $(s_1s_2s_4,\theta_\alpha)$.
\end{corollary}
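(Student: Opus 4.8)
The plan is to translate Corollary \ref{cor:disjoint} into a statement about $W^I$ and combine Proposition \ref{prop:d2} with the classification of \edps in the complete flag manifold $\cD(\Delta)$ obtained from the lemmas of Section \ref{ssec:dn1}. Recall that, by Proposition \ref{prop:generalfactsGP} (2)--(4) and Remark \ref{rem:Kleiman}, any \edp in $\cD(n)$ can be assumed to consist of Schubert cycles $([X_u],[X^v])$ with $u,v\in W^I$; pulling back along $\pi\colon\cD(\Delta)\to\cD(n)$ and using formulas (\ref{eq:chowrings}), (\ref{eq:chowrings2}) gives an \edp $([X_{uw_{0I}}],[X^v])$ in $\cD(\Delta)$. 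So it suffices to understand which \edps in $\cD(\Delta)$ are pullbacks from $\cD(n)$ and to read off the corresponding elements of $W^I$.

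\textbf{Key steps.} First I would dispose of the case $n=4$ by citing the {\tt SageMath} computation of Appendix \ref{ssec:ed}, exactly as done in the hypothesis of Proposition \ref{prop:d2}. For $n\geq 5$, take an \edp $([X_u],[X^v])$ in $\cD(\Delta)$ that is a pullback from $\cD(n)$; after possibly replacing $(u,v)$ by $(w_0u,w_0v)$ we may assume $\ell(v)\leq c(u)$. Such a pair cannot be a pullback from $\cD(1)$: indeed, by Lemma \ref{lem:d1}, a pullback from $\cD(1)$ has $\{u^J,v^J\}=\{w_\alpha,w_\beta\}$, hence $u_J=w_{0J}$ and $v_J=e$; but being a pullback from $\cD(n)$ forces $u_I=w_{0I}$ and $v_I=e$ (again by (\ref{eq:chowrings})), and $w_{0J}\neq w_{0I}w_{0K}$-type incompatibilities — more directly, $w_\alpha,w_\beta\notin W^I$ since each ends in $s_1$, which would have to lie in a reduced expression admissible for $W^I$, contradicting Definition \ref{def:WJ} applied to $I=\Delta\setminus\{n\}$ once one checks $s_1$ is among the forbidden right-letters — so the two pullback conditions are incompatible. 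Hence Proposition \ref{prop:d2} applies and yields $\ell(v)=c(u)=n-1$, $c^J(u)=\ell(v^J)=1$, $v^J=s_1$, $u^J=w_0^J\rig{1}$, and $([X_{u_J}],[X^{v_J}])$ an \edp for $\cD_J(J)=\DD_{n-1}(\Delta\setminus\{1\})$. Now use the description of $W^I$ via the $\theta$-expressions (the sequences (\ref{eq:WI})) together with the relation $\sigma_\beta=\theta_\gamma w_0^I$ and $[X^{\theta_\beta}]=[X_{\sigma_\beta}]$, $[X_{\sigma_\beta}]^2=0$ established above: the conditions $u_I=w_{0I}$, $c^I(u)=\ell(u^I)$ forced by the pullback together with $c^J(u)=1$ pin down $u^I=\sigma_\beta$, i.e. $u=\sigma_\beta$ as an element of $W^I$, and symmetrically $v=\theta_\beta$. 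It remains to check these are genuinely disjoint Schubert cycles in $\cD(n)$, which is exactly the displayed identity $[X_{\sigma_\beta}]\cdot[X_{\sigma_\beta}]=[X_{\sigma_\beta}]\cdot[X^{\theta_\beta}]=0$ together with $[X^{\theta_\beta}]=[X_{\sigma_\beta}]$, and that they form an \edp follows since $\ell(\theta_\beta)+c^I(\sigma_\beta)=(n-1)+(n-1)=2n-2=\ed(\cD(n))+1$ by Corollary \ref{cor:corthm1}.

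\textbf{Main obstacle.} The delicate point is the bookkeeping that converts ``$v^J=s_1$, $u^J=w_0^J\rig{1}$, $([X_{u_J}],[X^{v_J}])$ an \edp for $\DD_{n-1}$'' back into the assertion $u=\sigma_\beta$, $v=\theta_\beta$ as elements of $W^I$ — i.e. matching the $(W,W_J)$-decomposition data produced by Proposition \ref{prop:d2} with the $(W,W_I)$-decomposition that Lemma \ref{lem:dec} demands for membership in $W^I$. This requires carefully tracking which reflections appear in the reduced words and invoking $(w_0w_\beta)^I=\sigma_\beta$ (cf.\ (\ref{eq:thetasigma})) and the even/odd $n$ dichotomy of Lemma \ref{lem:longdn}; the induction on $n$ feeding in the classification of \edps for $\DD_{n-1}(\Delta\setminus\{1\})$ (i.e.\ the statement of this very Corollary for $n-1$) must be set up so that the base case $n=4$ and the step $n\geq5$ interlock cleanly with Proposition \ref{prop:d2}. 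Once the combinatorial identifications are in place, the geometric disjointness is immediate from the identities already recorded in Section \ref{ssec:dnn}.
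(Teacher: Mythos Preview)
Your setup is right: pull back the \edp along $\pi:\cD(\Delta)\to\cD(n)$ to get $([X_{uw_{0I}}],[X^v])$ in $\cD(\Delta)$, observe this cannot be a pullback from $\cD(1)$ (since $v\in W^I$ while Lemma~\ref{lem:d1} would force $v\in\{w_\alpha,w_\beta\}$, neither of which lies in $W^I$ as their reduced expressions end in $s_1$ with $1\in I$), and then invoke Proposition~\ref{prop:d2}. Up to here you match the paper.

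The gap is in your inductive step. Proposition~\ref{prop:d2} hands you an \edp $([X_{u_J}],[X^{v_J}])$ in the \emph{complete flag} $\cD_J(J)$ of type $\DD_{n-1}$, not in a spinor variety. The ``statement of this very Corollary for $n-1$'' classifies \edps in $\DD_{n-1}(n-1)$ or $\DD_{n-1}(n)$, so it does not apply to $([X_{u_J}],[X^{v_J}])$ without first knowing that this pair is itself a pullback from a spinor variety --- and that is the content of Proposition~\ref{prop:dnfail}, which is proved \emph{after} the present Corollary and uses it. So your induction, as stated, is either circular or simply not applicable.

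The paper sidesteps induction entirely with a direct combinatorial argument in $W^I$. Once Proposition~\ref{prop:d2} gives $\ell(v)=c^I(u)=n-1$ and $v^J=s_1$, one uses Stumbo's sequence description of $W^I$ (formula~(\ref{eq:WI})): the reflection $s_1$ appears in the reduced word $\theta_\gamma(l_1)\cdots\theta_\beta(l_{n-1})$ only if $l_{n-1}=n-1$, and combined with $\sum l_i=\ell(v)=n-1$ this forces $v=\theta_\beta$. Then for $u$, with sequence $(b_1,\dots,b_{n-1})$, the condition $\theta_\beta\not\le u$ forces $b_{n-1}<n-1$; together with $\sum b_i=\ell(u)=\binom{n-1}{2}$ and the strict-increase constraint in (\ref{eq:WI}) this pins down $b=(0,1,\dots,n-2)$, i.e.\ $u=\sigma_\beta$. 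No induction is needed. (For $n=4$ the paper uses triality: $\DD_4(4)\simeq\DD_4(1)$, so Proposition~\ref{prop:uniqueinquad} applies after permuting the indices $1$ and $4$; your {\tt SageMath} appeal also works.)
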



\begin{proof}
For $n=4$, the description of the \edps in $\DD_4(4)$ can be obtained from Proposition \ref{prop:uniqueinquad} by permuting the indices $1$ and $4$ in the diagram $\DD_4$.

For $n\geq 5$, we set $u'=uw_{0I}$; the pair $([X_{u'}],[X^v])$ is an \edp for $\cD(\Delta)$ which is not a pullback from $\cD(1)$, hence, by Proposition \ref{prop:d2}, we have $\ell(v)=c(u)=n-1$ and $v^J=s_1$. From the description of elements of $W^I$ given in \ref{ssec:dnn} we see that the only element of length $n-1$ containing $s_1$ is $\theta_\beta$, hence $v=\theta_\beta$ is the element corresponding to the sequence $a=(0, \dots, 0, n-1)$. Let  $b=(b_1, \dots, b_{n-1})$ be the sequence corresponding to $u$. We have 
 \[\sum b_i=\ell(u)=\dim \cD(n)-n-1= \frac{(n-1)(n-2)}{2}\]
and $b_{n-1} < a_{n-1}=n-1$ since $([X_{u}],[X^v])$ is an md-pair. This forces  $b=(0,1, \dots, n-3,n-2)$, that is, $u=\sigma_\beta$. 
\end{proof}

%
%
%
%

\subsection{Effective good divisibility of $\DD_n$-varieties}\label{ssec:proofthm2}

In this section we will conclude the proof of Theorem \ref{thm:2}. The main ingredient here will be  the following:

\begin{proposition}\label{prop:dnfail}
The only  \edps $([X_u],[X^v])$ for $\cD(\Delta)$  are pullbacks from $\cD(1),\cD(n-1)$ or $\cD(n)$.
\end{proposition}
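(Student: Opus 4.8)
The strategy is to reduce, via the node-removal induction already used in the proof of Theorem \ref{main1} and Proposition \ref{prop:d2}, the classification of md-pairs $([X_u],[X^v])$ in $\cD(\Delta)=\DD_n(\Delta)$ to the md-pairs of the sub-flag-manifold $\cD_J(J)=\DD_{n-1}(J)$, and then to recognize that the only md-pairs surviving the induction are exactly the pullbacks from $\cD(1)$, $\cD(n-1)$, $\cD(n)$. First I would dispose of the base cases $n=4$ (directly, via the {\tt SageMath} computation referenced in Appendix \ref{ssec:ed}) and $n=5$, and then assume $n\geq 5$ and argue by induction on $n$. Given an md-pair $([X_u],[X^v])$, up to replacing $(u,v)$ by $(w_0u,w_0v)$ we may assume $\ell(v)\leq c(u)$; by Theorem \ref{thm:1} we have $\ell(v)+c(u)=2n-2$, so $\ell(v)\leq n-1$.

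The dichotomy is then: either $([X_u],[X^v])$ is a pullback from $\cD(1)$ — in which case we are done, and by Lemma \ref{lem:d1} this happens precisely when $\{v^J,u^J\}=\{w_\alpha,w_\beta\}$ — or it is not. In the latter case Proposition \ref{prop:d2} applies and gives $\ell(v)=c(u)=n-1$, $c^J(u)=\ell(v^J)=1$, so $v^J=s_1$, $u^J=w_0^J\rig{1}$, and $([X_{u_J}],[X^{v_J}])$ is an md-pair for $\cD_J(J)=\DD_{n-1}(\Delta_J)$ with $\ell(v_J)=c_J(u)=n-2$. Now I apply the inductive hypothesis to the complete flag manifold $\cD_J(\Delta_J)$: the md-pair $([X_{u_J}],[X^{v_J}])$ must be a pullback from $\cD_J(1)$, $\cD_J(n-2)$ or $\cD_J(n-1)$ (the three "special" quotients of $\DD_{n-1}$). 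The task is to use the constraints $v^J=s_1$, $u^J=w_0^J\rig{1}$ — i.e. that the "$W^J$-part" of $(u,v)$ is forced — together with the explicit description of these three kinds of pullbacks (via Lemmas \ref{lem:linspquad}, \ref{lem:d1} and the parallel statements for spinor varieties in Section \ref{ssec:dnn}) to pin down $(u,v)$ exactly, and to check that in each case the resulting pair $([X_u],[X^v])$ of $\cD(\Delta)$ is in fact a pullback from one of $\cD(1)$, $\cD(n-1)$, $\cD(n)$. Concretely, knowing $v^J=s_1$ and $v_J$ (from the inductive classification, essentially $v_J=\theta_\beta$-type or $w_\alpha,w_\beta$-type inside $\DD_{n-1}$) should reassemble $v=v^Jv_J$ into one of the words $\theta_\beta$, $w_\alpha$, $w_\beta$ of $\cD$ appearing in Lemma \ref{lem:d1} and Corollary \ref{cor:disjoint}, and similarly for $u$; then formulas (\ref{eq:chowrings2}), (\ref{eq:oS}), (\ref{eq:thetasigma}) identify $[X^v]$ and $[X_u]$ as pullbacks.

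The main obstacle I anticipate is the bookkeeping in this last step: the inductive hypothesis classifies md-pairs of $\DD_{n-1}(\Delta_J)$, but $\cD_J$ has its nodes indexed $\{2,\dots,n\}$ (with first node $2$, and the fork at $n-1,n$), so I must carefully track the relabeling and the two possible "forks" of $\DD_{n-1}$ against those of $\DD_n$, and verify that multiplying the forced factor $s_1$ (resp. composing with $w_{0I}$ or $w_{0J}$) back into a pullback-from-$\cD_J(k)$ md-pair yields exactly a pullback from $\cD(1)$ when $k$ is the "point" quotient and a pullback from $\cD(n-1)$ or $\cD(n)$ when $k$ is a "spinor" quotient. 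I also need to handle the parity of $n$ (which swaps $w_\alpha\leftrightarrow w_\beta$ under conjugation by $w_0$, cf.\ Lemma \ref{lem:longdn}), and double-check that no spurious md-pair is created in the passage from $\cD_J$ to $\cD$, i.e.\ that reattaching node $1$ does not make $v\leq u$ hold. The rest — applying Proposition \ref{prop:d2}, Lemma \ref{lem:d1}, Theorem \ref{thm:1} and the Bruhat-order computations — is routine given the machinery already developed.
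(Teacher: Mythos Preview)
Your overall approach matches the paper's: induction on $n$ with base cases $n=4,5$ handled by machine, then for $n\geq 6$ invoke Proposition \ref{prop:d2} to reduce to an md-pair $([X_{u_J}],[X^{v_J}])$ in $\cD_J(J)$ with $v^J=s_1$, $u^J=w_0^J\rig{1}$, and apply the inductive hypothesis. However, your expectation for what happens in the three sub-cases is not quite right, and this is where the actual content of the argument lies.

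You anticipate that when $([X_{u_J}],[X^{v_J}])$ is a pullback from the quadric $\cD_J(2)$ (the ``point'' quotient of $\DD_{n-1}$; note the nodes of $\cD_J$ are $\{2,\dots,n\}$, so the special indices are $2,n-1,n$, not $1,n-2,n-1$), the reassembled pair $([X_u],[X^v])$ will be a pullback from $\cD(1)$. This cannot happen: by Lemma \ref{lem:d1} pullbacks from $\cD(1)$ have $\{v^J,u^J\}=\{w_\alpha,w_\beta\}$, which forces $\ell(v^J)=n-1$, incompatible with $v^J=s_1$. What actually occurs in this sub-case is that one computes $v=s_1w'_\gamma$ (with $w'_\gamma\in\{s_{n-1}\cdots s_2,\,s_n s_{n-2}\cdots s_2\}$), commutes $s_1$ past all but the last letter to get $v=v\lef{n-2}s_2$ with $v\lef{n-2}\leq u^J=w_0^J\rig{1}$, and since $s_2\leq u_J$ one concludes $v\leq u$. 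So this sub-case is \emph{eliminated by contradiction}, not promoted to a pullback from $\cD(1)$. Your own ``double-check that reattaching node $1$ does not make $v\leq u$'' is exactly where this shows up, but you should expect the check to \emph{fail} in this branch, not succeed.

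For the two spinor sub-cases the paper proceeds as you outline for $v$ (reassemble $v=s_1\theta'_\beta=\theta_\beta$, then use (\ref{eq:thetasigma}) to see $[X^v]$ is a pullback from $\cD(n)$), but does not attempt to identify $u$ directly. Instead it invokes Lemma \ref{lem:pullpull}: once $X^v$ is known to be a pullback from $\cD(r)$, equality of $\ed$ on $\cD(\Delta)$ and $\cD(r)$ forces $X_u$ to be a pullback as well. This saves you the bookkeeping of tracking $u=u^Ju_J$ through the induction, which is otherwise awkward because $u_J=\sigma'_\beta w_{0K}$ does not reassemble as transparently as $v$.
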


In fact, with this statement at hand we may now argue as follows:

\begin{proof}[Proof of Theorem \ref{thm:2}]
From Example \ref{ex:dnr}, Theorem \ref{thm:1} and Corollary \ref{cor:pull}, we know that, for any $R \subset \Delta$, 
\[2n-3=\ed(\cD(\Delta))\leq\ed(\cD(R))\leq 2n-2,\] 
If $\ed(\cD(R))= 2n-3$, pulling back to $\cD(\Delta)$ an \edp  for $\cD(R)$ we get an \edp for $\cD(\Delta)$, and, by Proposition \ref{prop:dnfail},  $R$ meets $\{1,n-1,n\}$.
\end{proof}

Before proving Proposition \ref{prop:dnfail}, let us show the following:

\begin{lemma}\label{lem:pullpull}
Let $([X_u],[X^v])$ be an \edp for $\cD(\Delta)$; if  $X^v$ is a pullback from $\cD(r)$, $r=1,n-1$ or $n$, then also $X_u$ is a pullback from the same variety.
\end{lemma}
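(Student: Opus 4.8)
The plan is to show that, writing $J=\Delta\setminus\{r\}$ for the relevant node $r\in\{1,n-1,n\}$, the hypothesis that $X^v$ is a pullback from $\cD(r)=G/P_J$ forces $v_J=e$ (equivalently $v\in W^J$), and then to deduce from the \edp condition and the combinatorics already established that necessarily $u_J=w_{0J}$, which by Remark \ref{rem:chowrings} is exactly the statement that $X_u$ is a pullback from $\cD(r)$. First I would record what "pullback from $\cD(r)$" means in terms of words: by the computation in Remark \ref{rem:chowrings} (formulas (\ref{eq:chowrings}), (\ref{eq:chowrings2})), the classes pulled back from $G/P_J$ are exactly the $[X^w]$ with $w\in W^J$, and the Schubert classes $[X_u]$ pulled back from $G/P_J$ are exactly those with $u=u^Jw_{0J}$, i.e. $u_J=w_{0J}$. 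So the content of the lemma is: if $v=v^J$ (i.e. $v_J=e$) and $[X_u]\cdot[X^v]=0$ with $c(u)+\ell(v)=\cox(\cD)=2n-2$, then $u_J=w_{0J}$.

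The key step is a length/codimension bookkeeping argument, essentially the same one used in Step 1 and Step 2 of the proof of Proposition \ref{prop:d2} and in Corollary \ref{lem:J}. Using (\ref{eq:cod}), $c(u)=c^J(u)+c_J(u)$, and $\ell(v)=\ell(v^J)$ since $v_J=e$. If $u_J\neq w_{0J}$, then $c_J(u)\geq 1$, so $c^J(u)+\ell(v^J)\leq 2n-3=\cox(\cD_J)$; comparing with the \edp relation I want to show that in this regime $v^J\leq u^J$ in $W^J$. For this I invoke the description of $W^J$ from Example \ref{ex:projquad} (Stumbo's reduced expressions) together with Lemma \ref{lem:strem} (3): whenever $\ell(v^J)\leq\ell(u^J)$ we get $v^J\leq u^J$ unless $\{v^J,u^J\}=\{w_\alpha,w_\beta\}$, and in that exceptional case $\ell(v^J)=\ell(u^J)=n-1$, which combined with $c^J(u)=\ell(w_0^J)-\ell(u^J)=\cox(\cD)-(n-1)-1=n-2$ would force $c_J(u)+\ell(v_J)$ to be too large, i.e. $c_J(u)=0$, contradicting $u_J\neq w_{0J}$. (One must check the inequality $\ell(v^J)\leq\ell(u^J)$ holds, or reduce to it by replacing $u,v$ with $w_0u,w_0v$ as in Lemma \ref{lem:d1}, using $w_0ww_{0J}\in W^J$ from \cite[Prop.~2.5.4]{BjBr}; I would spell this symmetry out explicitly since the roles of $u$ and $v$ are not symmetric a priori.) Once $v^J\leq u^J$, I combine it with $v_J=e\leq u_J$ to conclude $v=v^Jv_J\leq u^Ju_J=u$, hence $[X_u]\cdot[X^v]\neq 0$ by Proposition \ref{prop:generalfactsGP}(4) (or Proposition \ref{prop:generalfacts}(4) on $G/B$), contradicting that $([X_u],[X^v])$ is an \edp. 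Therefore $u_J=w_{0J}$, as desired.

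For the two spinor cases $r=n-1$ and $r=n$ the diagram-deleting combinatorics is not the "delete node $1$" one of Example \ref{ex:projquad}, so I would either appeal to the symmetry of the $\DD_n$ diagram (the graph automorphism swapping nodes $n-1$ and $n$, and for $r=n$ itself the computation in \S\ref{ssec:dnn}), reducing to the analysis already carried out there, or rerun the same length count using the description of $W^I$ (with $I=\Delta\setminus\{n\}$) from \S\ref{ssec:dnn} and Corollary \ref{cor:disjoint}. The main obstacle I anticipate is precisely handling the exceptional pair $\{w_\alpha,w_\beta\}$ in Lemma \ref{lem:strem}(3) cleanly and making sure the reduction to the case $\ell(v^J)\le\ell(u^J)$ (via $w_0$-conjugation) is compatible with "being a pullback" — i.e. checking that if $X^v$ is a pullback from $\cD(r)$ then so is $w_0X_{w_0v}$ in the relevant sense, which is exactly the kind of computation done in the last paragraph of the proof of Lemma \ref{lem:d1}. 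Everything else is routine length arithmetic with (\ref{eq:cod}) and Lemma \ref{lem:dec}.
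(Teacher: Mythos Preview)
Your approach is genuinely different from the paper's. The paper gives a three-line geometric argument that works uniformly for $r\in\{1,n-1,n\}$: since $X^v=\pi^{-1}(\pi(X^v))$ for the projection $\pi:\cD(\Delta)\to\cD(r)$, the set-theoretic disjointness $X_u\cap X^v=\emptyset$ is equivalent to $\pi(X_u)\cap\pi(X^v)=\emptyset$; the codimension of $\pi(X^v)$ is unchanged, while that of $\pi(X_u)$ can only drop, and it drops strictly exactly when $X_u$ is not a pullback. But a strict drop would produce a pair of disjoint nonzero effective cycles in $\cD(r)$ of total codimension $<2n-2$, contradicting $\ed(\cD(r))=2n-3=\ed(\cD(\Delta))$. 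No Weyl-group combinatorics is needed beyond what has already been established.

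Your combinatorial route via Lemma~\ref{lem:strem}(3) does work for $r=1$ (modulo two small slips: $\cox(\cD_J)=\cox(\DD_{n-1})=2n-4$, not $2n-3$; and in the exceptional case $c^J(u)=\ell(w_0^J)-\ell(u^J)=(2n-2)-(n-1)=n-1$, not $n-2$ --- but neither slip affects the conclusion, since what you actually need is $c_J(u)=c(u)-c^J(u)=(n-1)-(n-1)=0$, which you do get). Also, the inequality $\ell(v^J)\le\ell(u^J)$ holds automatically here because $v_J=e$, so your detour through the $w_0$-symmetry is unnecessary for $r=1$.

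The real gap is in the spinor cases $r=n-1,n$. There you need an analogue of Lemma~\ref{lem:strem}(3) for $W^I$, and no such statement is available in the paper; the Stumbo description of $W^I$ in \S\ref{ssec:dnn} is considerably more intricate than that of $W^J$, and ``rerunning the length count'' is not routine. The cleanest way to complete your argument in those cases is to note that $v\not\le u$ forces $v\not\le u^I$ (since $u^I\le u$), so $[X_{u^I}]\cdot[X^v]=0$ in $\cD(n)$; then $\ed(\cD(n))=2n-3$ forces $c^I(u)+\ell(v)\ge 2n-2$, hence $c^I(u)=c(u)$ and $u_I=w_{0I}$. But that is precisely the paper's projection argument, phrased in Bruhat-order language. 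So your plan is salvageable, but in completing it you would rediscover the paper's much shorter proof.
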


\begin{proof}
 Let $\pi:\cD(\Delta)\to \cD(r)$ be the natural projection. If $X^v$ is a pullback from $\cD(r)$, then $X_u \cap X^v =\emptyset$ if and only if $\pi(X_u)\cap \pi(X^v) =\emptyset$. 

Since $X^v$ is a pullback  then $\codim(\pi(X^v),\cD(r))=\codim(X^v,\cD(\Delta))$, while $\codim(\pi(X_u),\cD(r))\geq\codim(X_u,\cD(\Delta))$ and equality holds if and only if $X_u$ is a pullback. Since the effective good divisibility of $\cD(\Delta)$ and $\cD(r)$ are equal, the conclusion follows.
\end{proof}

\begin{proof}[Proof of Proposition \ref{prop:dnfail}]
We start by noting that in the cases $n=4,5$ the statement can be proved by listing all the possible \edps and checking that they are pullbacks from $\cD(1),\cD(n-1)$, or $\cD(n)$. (see Appendix, \ref{ssec:ed} and \ref{ssec:pull}). We prove the statement for $n\geq 6$ by induction.

By Lemma \ref{lem:d1} and Proposition \ref{prop:d2} either $([X_{u}],[X^{v}])$ is a pullback from $\cD(1)$ or  
 $\ell(v)=c(u)=n-1$, $(c^J(u),\ell(v^J))=(1,1)$ (in particular $v^J=s_1$), and $([X_{u_J}],[X^{v_J}])$ is an \edp for $\cD_J$. By induction $([X_{u_J}],[X^{v_J}])$ is a pullback from $\cD_J(2),\cD_J(n-1)$ or $\cD_J(n)$.\par
\medskip
If $X_{u_J},X^{v_J}$ were pullbacks from $\cD_J(2)$, by Lemma \ref{lem:d1} we would have, setting $J':=J \setminus \{2\}$, $w'_\alpha:=s_{n-1}s_{n-2} \dots s_2$ and $w'_\beta:=s_{n}s_{n-2} \dots s_2$, that
\[(v_J,u_J)= (w'_\alpha, w'_\alpha w_{0J'})\ \text{or} \ (w'_\beta, w'_\beta w_{0J'}).\]
Therefore, since $v^J = s_1$ we would get
\[
v= s_{n}s_{n-2} \dots s_3 s_1 s_2, \quad \text{or} \quad v= s_{n-1}s_{n-2} \dots s_3 s_1 s_2,
\]
hence $v\lef{n-2} \le u^J =w_0^J\rig{1}$ (see Example \ref{ex:projquad}). 
 In every case $v\rig{n-2}=s_2 \le u_J$, therefore $v \le u$, against the assumptions.\par
 \medskip
If $X_{u_J},X^{v_J}$ are pullbacks from $\cD_J(n)$, then $v_J = \theta'_\beta= s_2 \dots s_{n-3}s_{n-2}s_{n}$ and $u_J=\sigma'_\beta w_{0K}$ (see Corollary \ref{cor:disjoint}).
It follows that $v=s_1v_J=\theta_\beta$ and $w_0v= \sigma_\beta w_{0I}$ (see Equation (\ref{eq:thetasigma})). In particular  $[X^v]=[X_{w_0v}]=[X_{\sigma_\beta w_{0I}}]$   is a pullback from $\cD(n)$. By Lemma \ref{lem:pullpull} $X_u$ is a pullback from $\cD(n)$, too.\par\medskip

If  $X_{u_J},X^{v_J}$  are pullbacks from $\cD_J(n-1)$ the argument is analogous.
\end{proof}

\section{Morphisms to rational homogeneous varieties}\label{sec:applications}

In this section we are going to prove Theorem \ref{teo:maps2b}. This result is a refined, more general version of \cite[Proposition~4.7]{MOS6}, in which the proof was done by using Chern classes of pullbacks of universal bundles.

\begin{remark}\label{rem:maps2b}
For every $r\in R$, denote by $\pi_r$ the natural map $\cD(R)\to \cD(r)$. Since $\cD(R)$ can be identified with a subvariety of $\prod_{r\in R}\cD(r)$ so that the maps $\pi_r$ are restrictions of the natural projections, then $f:M\to \cD(R)$ is constant if and only if $\pi_r\circ f$ is constant for every $r$. As a consequence, in the setup of Theorem \ref{teo:maps2b} we may assume, without loss of generality, that $R$ consists of a single element $r$.
\end{remark}

The key result to relate $\ed(M)$ with the existence of nonconstant morphisms from $M$ to a homogeneous variety (in particular to a rational homogeneous varieties $G/P$) is the following:

\begin{lemma}\label{lem:nosurj} Let $M$ be a smooth complex projective variety, $X$ a complex variety with a transitive action of a connected algebraic group $G$, and  $\varphi:M \to 
X$ a morphism, with image $M'$. If $[A]\in \AAA^a(X), [B] \in \AAA^b(X)$ are effective classes satisfying $[A]\cdot [B]=0$ and $a+b \le \ed(M)$ then, either $[M'] \cdot [A] = 0$, or 
$[M'] \cdot [B] = 0$; in particular $\varphi$ is not surjective.
\end{lemma}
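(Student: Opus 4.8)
The plan is to use Kleiman's moving lemma together with the homogeneity of $X$. First I would reduce to the case where $[A]$ and $[B]$ are represented by honest subvarieties $A, B \subseteq X$: by Proposition \ref{prop:generalfactsGP} (or its analogue for a general homogeneous $X$) effective classes are nonnegative combinations of Schubert cycles, and since $[A]\cdot[B]=0$ one can choose representatives among the Schubert subvarieties appearing with positive coefficient, which by \cite[Theorem~1.7]{3264} (cf. Remark \ref{rem:Kleiman}) are represented by disjoint subvarieties. Actually it is cleaner to argue directly with cohomology classes and the generic-transitivity form of Kleiman's theorem, so I would not dwell on this reduction.

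Next, the heart of the argument: let $M'=\varphi(M)$, of dimension $m'=\dim M'$, and let $a+b\le \ed(M)$ with $[A]\cdot[B]=0$ in $\AAA^\bullet(X)$. Since $G$ acts transitively on $X$ and is connected, by Kleiman's transversality theorem the translate $gA$ meets $M'$ properly for general $g\in G$, and likewise for $B$; moreover for general $g$ the class $[gA\cap M']\in \AAA^{a}(M')$ equals the restriction $[A]|_{M'}$ (and similarly for $B$). I would then pull everything back to $M$ via $\varphi$. Because $\varphi\colon M\to M'$ is surjective, the Gysin/pushforward formalism gives that $\varphi^*([A]|_{M'})\ne 0$ in $\AAA^a(M)$ whenever $[A]|_{M'}\ne 0$ — indeed, $\varphi_*\varphi^*(\xi)$ is a positive multiple of $\xi$ on the subvariety cut out, or more simply one uses that a nonzero effective class on $M'$ pulls back to a nonzero effective class on $M$ since $\varphi$ is dominant. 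So suppose for contradiction that both $[M']\cdot[A]\ne 0$ and $[M']\cdot[B]\ne 0$ in $\AAA^\bullet(X)$; equivalently $[A]|_{M'}\ne 0$ and $[B]|_{M'}\ne 0$. Then $\varphi^*([A]|_{M'})$ and $\varphi^*([B]|_{M'})$ are nonzero effective classes in $\AAA^a(M)$ and $\AAA^b(M)$ respectively, with $a+b\le \ed(M)$.

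To finish I would compute the product $\varphi^*([A]|_{M'})\cdot \varphi^*([B]|_{M'})$ in $\AAA^\bullet(M)$: it equals $\varphi^*\bigl(([A]\cdot[B])|_{M'}\bigr)=\varphi^*(0)=0$, using that restriction and pullback are ring homomorphisms and $[A]\cdot[B]=0$ already in $\AAA^\bullet(X)$. This exhibits two nonzero effective classes on $M$ in complementary-type codimensions $a,b$ with $a+b\le \ed(M)$ and zero product, contradicting the definition of effective good divisibility (Definitions \ref{def:edupto}, \ref{def:ed}). Hence one of $[M']\cdot[A]$, $[M']\cdot[B]$ vanishes. For the final clause, surjectivity of $\varphi$ onto $X$ would force $M'=X$, so $[M']=[X]$ is the unit class and $[M']\cdot[A]=[A]\ne 0$, $[M']\cdot[B]=[B]\ne 0$ (as $[A],[B]$ were assumed nonzero in the intended application), contradicting what we just proved; so $\varphi$ is not surjective.

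The main obstacle I anticipate is the bookkeeping around Kleiman's lemma and the claim that $[A]|_{M'}\ne 0 \iff [M']\cdot[A]\ne 0$ in $\AAA^\bullet(X)$, together with the assertion that a nonzero effective class on $M'$ pulls back to a nonzero effective class on $M$ under the surjection $\varphi\colon M\to M'$ — this last point needs a short argument (e.g. cap with a power of an ample class, or use that $\varphi_*\varphi^*=(\deg\varphi)\cdot\mathrm{id}$ if $\dim M=\dim M'$, and a fibration-type argument otherwise, or simply that pullback along a dominant morphism of a nonzero effective cycle stays nonzero and effective). One must also be slightly careful that $X$ is allowed to be any homogeneous space, not necessarily projective or rational homogeneous, but since $M$ is projective the image $M'$ is projective and all intersection theory needed takes place on $M'\subseteq X$ and on $M$, so the argument goes through.
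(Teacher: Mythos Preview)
Your approach shares the paper's core idea --- use Kleiman transversality on the homogeneous target to realize pullbacks as effective classes on $M$, multiply, and invoke $\ed(M)$ --- but you take an unnecessary detour through the image $M'$. The paper argues more directly: for general $g,g'\in G$ the translates $gA$, $g'B$ are generically transverse to $\varphi$ itself (\cite[Theorem~1.7]{3264}), so $\varphi^{-1}(gA)$ and $\varphi^{-1}(g'B)$ are generically reduced of codimensions $a,b$ in $M$, and \cite[Theorem~1.23]{3264} gives $\varphi^*[A]=[\varphi^{-1}(gA)]$, $\varphi^*[B]=[\varphi^{-1}(g'B)]$ as effective classes on $M$. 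Since $\varphi^*[A]\cdot\varphi^*[B]=\varphi^*([A]\cdot[B])=0$ and $a+b\le\ed(M)$, one of them vanishes. Finally $[\varphi^{-1}(gA)]=0$ iff $\varphi^{-1}(gA)=\emptyset$ iff $M'\cap gA=\emptyset$, which for general $g$ is equivalent to $[M']\cdot[A]=0$.

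This direct route completely bypasses the two obstacles you correctly flagged: you never need to make sense of $[A]|_{M'}$ when $M'$ may be singular (there is no Chow \emph{ring} on a singular variety, so ``restriction'' is already delicate), and you never need the claim that pullback along the surjection $M\to M'$ preserves nonvanishing of effective classes. Your first paragraph, reducing to Schubert representatives, is also unnecessary: Kleiman's theorem applies to arbitrary closed subvarieties representing $[A]$ and $[B]$, and one only needs effectivity of the pullbacks, not any special form.
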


\begin{proof}
For a general $g,g' \in G$, $gA$ and $g'B$  are disjoint and generically transverse to $\varphi$, that is, $\varphi^{-1}(gA), \varphi^{-1}(g'B)$ are generically reduced and of the same codimensions as $A,B\subset X$ (see \cite[Theorem 1.7]{3264}). Note that the existence of the transitive action of $G$ on $X$ implies that $X$ is quasi-projective and, in particular, we may apply \cite[Theorem 1.23]{3264}, to get  that  $\varphi^*[A]=[\varphi^{-1}(gA)]$ and $\varphi^*[B]=[\varphi^{-1}(g'B)]$; since $\varphi^*[A]\cdot \varphi^*[B]=0$, by the assumptions on $\ed(M)$ either $[\varphi^{-1}(gA)]$ or $[\varphi^{-1}(g'B)]$ is the zero class in $M$. We conclude by noting that $[\varphi^{-1}(gA)]=0$ is equivalent to  $[M'] \cdot [A] =0$.
\end{proof}

\begin{corollary}\label{cor:nosurj0} 
Let $M$ and $M'$ be complex projective varieties, with $M$ smooth and $\ed(M)> \dim M'$. Then there are no nonconstant morphisms $\varphi:M \to M'$.
\end{corollary}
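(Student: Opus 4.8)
The plan is to deduce Corollary \ref{cor:nosurj0} directly from Lemma \ref{lem:nosurj}, using the homogeneous structure already available on any rational homogeneous target, together with an elementary dimension count. First I would reduce to the case where $M'$ is itself a rational homogeneous variety (or, more generally, a variety with a transitive action of a connected algebraic group): if $\varphi:M\to M'$ is nonconstant, pick any nonconstant morphism $M'\to X$ with $X$ such a homogeneous variety, or — more simply — observe that the only use of $M'$ in the corollary is its dimension, so it suffices to derive a contradiction from the existence of a nonconstant $\varphi:M\to M'$ whose image $M''$ has positive dimension. The key point is to produce on $M''$ (equivalently, on any ambient homogeneous variety containing it) two effective classes whose codimensions sum to at most $\ed(M)$ but whose intersection is zero.

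The heart of the argument is the following observation: if $M''=\varphi(M)$ has dimension $d\le\dim M'<\ed(M)$, then on $M''$ itself we can take $[A]$ to be the class of a point and $[B]$ the class of $M''$ (i.e. the fundamental class), but a cleaner route is to work inside a chosen homogeneous variety. Concretely, embed $M'$ (hence $M''$) and let $H$ denote the class of a general hyperplane section with respect to a fixed projective embedding of the homogeneous variety $X\supset M''$; then $[A]=H^{d}$ restricts to the class of finitely many points on $M''$, while for a general translate $gA$ we still get $gA\cap M''$ consisting of $\deg(M'')$ points. Then choosing $[B]$ to be the class of another general point $H^{\dim X}$ — or, most simply, using that on a homogeneous variety two general points have disjoint translates and $H^{\dim X}\cdot H^{\dim X}=0$ for dimension reasons — we obtain effective classes $[A],[B]$ with $[A]\cdot[B]=0$ and $a+b$ as small as $2d\le 2\dim M'$; but I only need $a+b\le \ed(M)$, and since $d<\ed(M)$ I can in fact take $a=d$, $b=1$ with $[B]$ the hyperplane class on $X$, forcing $a+b=d+1\le\dim M'+1\le \ed(M)$. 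Applying Lemma \ref{lem:nosurj} to $\varphi:M\to X$ (with its transitive $G$-action), either $[M'']\cdot[A]=0$ or $[M'']\cdot[B]=0$, both impossible since $[M'']$ is a nonzero effective class of dimension $d\ge 1$ and $H$ is ample. This contradiction shows $\varphi$ is constant.

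I expect the main obstacle to be bookkeeping around which homogeneous variety $X$ to use and how to legitimately cut down $M''$ by general translates: one needs $X$ quasi-projective with a transitive connected algebraic group action so that the moving lemma \cite[Theorem 1.7]{3264} and \cite[Theorem 1.23]{3264} invoked inside the proof of Lemma \ref{lem:nosurj} apply, and one must be sure the classes $[A]=H^{d}$ and $[B]=[H]$ (or a point) are genuinely effective classes on $X$ with product zero in $\AAA^\bullet(X)$ — this is automatic for a point times a point when $2\dim M''>\dim X$, but to be safe I would instead take $X$ to be a large projective space, or the rational homogeneous variety $\cD(r)$ itself when $M'=\cD(R)$, reducing to $R$ a singleton as in Remark \ref{rem:maps2b}. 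The cleanest writeup simply takes $[A]$ to be the class of a point on $X$ and $[B]=[H]$: then $a+b=\dim X+1$, which is too large, so one really does need to intersect with $M''$ first, i.e. apply the argument with $a=\dim M''$. This is exactly the content of Lemma \ref{lem:nosurj}: with $a+b\le\ed(M)$ and $\dim M''=a<\ed(M)$ one may set $b=1$, and the non-surjectivity conclusion of that lemma, applied to the inclusion-induced setting, already yields $[M'']\cdot[H]=0$ or $[M'']\cdot(\text{pt})=0$, a contradiction.

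In short: apply Lemma \ref{lem:nosurj} with $[A]$ the class of a general point of $X=\cD(r)$ (after reducing to $R$ a singleton by Remark \ref{rem:maps2b}) and $[B]$ a general complementary-dimension effective cycle chosen so that $a+b\le\ed(M)$ is forced by $\dim M'<\ed(M)$; the lemma then gives $[M']\cdot[A]=0$ or $[M']\cdot[B]=0$, contradicting that $[M']$ is a nonzero effective class meeting a general point. Hence no nonconstant $\varphi$ exists.
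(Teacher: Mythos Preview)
Your proposal has a genuine gap: the corollary is about an \emph{arbitrary} complex projective variety $M'$, not a rational homogeneous one, and your reduction ``to $R$ a singleton by Remark \ref{rem:maps2b}'' presupposes $M'=\cD(R)$, which is not given. All of your attempts to choose $[A],[B]$ run into the same obstacle: Lemma \ref{lem:nosurj} requires $[A]\cdot[B]=0$ in $\AAA^\bullet(X)$ for some homogeneous $X$, and this forces $a+b>\dim X$ whenever $X$ is something like a projective space. So if you embed $M'$ into a large homogeneous $X$, you cannot get $a+b\le\ed(M)$; and your alternative --- taking $[A]=H^{d}$ with $d=\dim M''$ and $[B]=H$ --- does not satisfy $[A]\cdot[B]=0$ in $\AAA^\bullet(X)$ unless $d+1>\dim X$. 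You are tacitly using vanishing on $M''$, but the lemma needs it on $X$.

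The missing idea is to manufacture a homogeneous target whose dimension equals $\dim M'$. The paper does this in one line: replace $M'$ by $f(M)$ so that $f$ is surjective, then take a finite (Noether normalization) morphism $g:M'\to\PP^{\dim M'}$ and set $\varphi=g\circ f$. Now $X=\PP^{\dim M'}$ carries a transitive $\mathrm{PGL}$-action, and with $[A]$ a point and $[B]$ a hyperplane one has $[A]\cdot[B]=0$ and $a+b=\dim M'+1\le\ed(M)$. Lemma \ref{lem:nosurj} then says $\varphi$ is not surjective, contradicting the surjectivity of $f$ and finiteness of $g$; hence $f$ is constant. Once you insert this Noether normalization step, the rest of your argument collapses to exactly this.
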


\begin{proof}
Let $f:M \to M'$ be a morphism; replacing $M'$ with the image of $f$ we may assume that $f$ is surjective. Composing with a finite morphism $g:M' \to \PP^{\dim M'}$ we get a morphism $\varphi:M \to \PP^{\dim M'}$, which is constant by Lemma \ref{lem:nosurj}. It follows that $f$ is constant.
\end{proof}

\begin{corollary}\label{cor:nosurj1} If $\cD$ is a diagram of classical type and $M$ is a smooth complex projective variety such that $\ed(M)> \ed(\cD(1))$ then there are no nonconstant morphisms $\varphi:M \to \cD(1)$. 
\end{corollary}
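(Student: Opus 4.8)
The plan is to reduce the statement to the case of a morphism $\varphi: M \to \cD(1)$ that is surjective onto its image, and then to produce, inside the Chow ring of $\cD(1)$, a pair of effective classes $[A], [B]$ with $[A]\cdot[B]=0$ and $a+b = \ed(\cD(1))+1 \le \ed(M)$, so that Lemma \ref{lem:nosurj} forces $\varphi$ to miss a divisor in one of the two classes. Concretely, $\cD(1)$ is one of $\PP^n$, a smooth quadric $\Q^{2n-1}$, $\PP^{2n-1}$, or a smooth quadric $\Q^{2n-2}$ (Table \ref{tab:RH}), and for each of these the value of $\ed(\cD(1))$ is pinned down by Table \ref{tab:b1} together with Corollary \ref{cor:corthm1} and Theorem \ref{thm:2}: it equals $n$, $2n-1$, $2n-1$, and $2n-2$ respectively. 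In each case one has an explicit maximal disjoint pair of Schubert cycles witnessing the value $\ed(\cD(1))$ — for $\PP^N$ a point and a hyperplane (or any two disjoint linear subspaces of complementary dimension), and for a smooth quadric the classes described in Example \ref{ex:dnsp} and Example \ref{ex:projquad} (two disjoint maximal linear subspaces in the even-dimensional quadric, a point and a maximal linear subspace in the odd-dimensional quadric), which by Remark \ref{rem:Kleiman} can be taken to be represented by genuinely disjoint subvarieties.

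First I would invoke Lemma \ref{lem:nosurj} directly: take $X = \cD(1)$, which carries a transitive action of the connected group $G$, take $\varphi: M \to \cD(1)$ with image $M'$, and take $[A], [B]$ to be the maximal disjoint pair above, so $a + b = \ed(\cD(1)) + 1$. The hypothesis $\ed(M) > \ed(\cD(1))$ gives $a + b \le \ed(M)$, so Lemma \ref{lem:nosurj} yields $[M']\cdot[A] = 0$ or $[M']\cdot[B] = 0$, and in particular $\varphi$ is not surjective, i.e. $M' \subsetneq \cD(1)$.

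To upgrade non-surjectivity to constancy I would argue as in Corollary \ref{cor:nosurj0}: if $f: M \to \cD(1)$ is any morphism, replace $\cD(1)$ by the image $M' := f(M)$, a projective variety of dimension $\dim M' < \dim \cD(1)$; then I need the bound $\ed(M) > \dim M'$. This is where the only genuine bookkeeping lies: one must check that $\ed(\cD(1)) \ge \dim \cD(1) - 1$ in each of the four cases, so that $\ed(M) > \ed(\cD(1)) \ge \dim \cD(1) - 1 \ge \dim M'$. For $\PP^N$ this is $N \ge N$ (in fact $\ed(\PP^N) = N = \dim \PP^N$, not merely $\dim - 1$, so the inequality $\ed(M) > \dim M'$ holds as long as $\dim M' \le N$, which is automatic); for $\Q^{2n-1}$ one has $\ed = 2n-1 = \dim$, again fine; for $\Q^{2n-2}$ one has $\ed = 2n-3 = \dim - 1$, so $\ed(M) > 2n-3$ forces $\ed(M) \ge 2n-2 > \dim M'$ whenever $M' \subsetneq \cD(1)$; for $\PP^{2n-1} = \DC_n(1)$, $\ed = 2n-1 = \dim$. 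In all cases $\ed(M) > \dim M'$, so composing $f: M \to M'$ with a finite map $M' \to \PP^{\dim M'}$ and applying Lemma \ref{lem:nosurj} (with $X = \PP^{\dim M'}$ and $[A],[B]$ a point and a hyperplane) shows the composite is constant, hence $f$ is constant.

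The main obstacle is essentially clerical rather than conceptual: one must be careful that the pair of disjoint effective classes used in $\cD(1)$ really has codimension sum exactly $\ed(\cD(1)) + 1$ — this is guaranteed by the examples in Section \ref{ssec:upper} together with the identification of $\ed(\cD(1))$ in Corollary \ref{cor:corthm1}, Theorem \ref{thm:2}, and Table \ref{tab:b1} — and that the resulting numerical inequality $\ed(M) > \ed(\cD(1)) \ge \dim M'$ holds uniformly, the tightest case being the even-dimensional quadric where $\ed = \dim - 1$. Once these are in place the corollary follows immediately from Lemma \ref{lem:nosurj} and the argument of Corollary \ref{cor:nosurj0}; indeed it is just the special case $R = \{1\}$ of Theorem \ref{teo:maps2b}, which in turn follows from Lemma \ref{lem:nosurj} applied to a maximal disjoint pair of Schubert cycles on $\cD(r)$ pulled back along $\varphi$.
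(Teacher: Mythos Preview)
Your proposal is correct and follows essentially the same approach as the paper's proof: use Lemma \ref{lem:nosurj} (applied to an \edp on $\cD(1)$) to see that $\varphi$ is not surjective, then invoke the inequality $\ed(\cD(1))\ge \dim\cD(1)-1$ to obtain $\ed(M)>\dim M'$ and conclude via Corollary \ref{cor:nosurj0}. The only difference is that the paper states the key inequality $\ed(\cD(1))\ge \dim\cD(1)-1$ in one line (citing Theorem \ref{thm:2}), whereas you verify it type by type and also spell out the explicit \edps---this extra detail is fine but not needed.
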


\begin{proof}
 
In view of Lemma \ref{lem:nosurj} a morphism $\varphi:M \twoheadrightarrow M' \subseteq \cD(1)$ cannot be surjective. Since, in all cases $\ed(\cD(1)) \ge \dim \cD(1)-1$ (see Theorem \ref{thm:2}), we get $\ed(\cD(1))\geq  \dim M'$, hence 
 $\ed(M)> \dim M'$; by Corollary \ref{cor:nosurj0}  we get that 
$\varphi$ is constant.
\end{proof}

Let us now describe, in the following example, 
two rational maps that will be used in the Proof of Theorem \ref{teo:maps2b}.

\begin{example}\label{ex:2raz}
Let $p \in \PP^n$ be a point, $H$ a hyperplane not containing $p$, and $r$ an integer $2 \le r \le n$. We will consider two rational maps defined on $\DA_n(r)$.

To unify the notation we will use the convention that $\DA_{n-1}(n)$ is a  single point.
\begin{enumerate}
\item The linear projection from $p$ onto $H$ induces a rational map $$\pi_p:\DA_n(r) \dashrightarrow \DA_{n-1}(r),$$ 
sending $\Lambda$ to $\pi_p(\Lambda)$.  It is not defined in the subvariety $\Sigma_p \subset \DA_n(r)$, parametrizing $(r-1)$-dimensional linear spaces passing by $p$. 
The  fiber of $\pi_p$ 
 over $\Lambda \in  \DA_{n-1}(r)$ consists of the $(r-1)$-dimensional linear spaces contained in $\langle \Lambda, p \rangle$ and not containing $p$, therefore  is an affine space $\mathbb A^{r}$.

\item We can also define a rational map $$\pi_H:\DA_n(r) \dashrightarrow \DA_{n-1}(r-1),$$ sending $\Lambda$ to $\Lambda \cap H$. This map is not defined on the subvariety $\Sigma_H \subset \DA_n(r)$, parametrizing $(r-1)$-dimensional linear spaces contained in $H$. The fiber of $\pi_H$ 
 over $\Lambda \in  \DA_{n-1}(r-1)$ consists of the $(r-1)$-dimensional linear spaces containing $\Lambda$ and not contained in $H$,  therefore is an affine space $\mathbb A^{n-r+1}$.

\end{enumerate}
\end{example}

The fact that the fibers of the above maps are affine provides the following:

\begin{lemma}\label{lem:2raz}
Let $r,n$ be integers such that  $2 \le r \le n$, let $\varphi:M \twoheadrightarrow M' \subseteq \DA_n(r)$ a morphism from a projective variety $M$. Let $\Sigma_p,\Sigma_H\subseteq \DA_n(r)$ be as in Example \ref{ex:2raz}. 
\begin{enumerate}
\item  If $M' \cap \Sigma_p= \emptyset$ and $\pi_p \circ \varphi$ is constant, then $\varphi$ is constant.
\item If $M' \cap \Sigma_H= \emptyset$ and $\pi_H \circ \varphi$ is constant, then $\varphi$ is constant.
\end{enumerate}
\end{lemma}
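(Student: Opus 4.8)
\textbf{Proof plan for Lemma \ref{lem:2raz}.}

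The plan is to treat both parts by the same argument, exploiting that $\pi_p$ (resp. $\pi_H$) is a \emph{morphism} on the locus we care about, once we know $M'$ misses the indeterminacy locus. First I would observe that if $M'\cap\Sigma_p=\emptyset$, then the rational map $\pi_p$ restricts to an honest morphism $\pi_p|_{M'}:M'\to\DA_{n-1}(r)$, since the indeterminacy locus of $\pi_p$ is exactly $\Sigma_p$ (Example \ref{ex:2raz}). Hence $\pi_p\circ\varphi:M\to\DA_{n-1}(r)$ is a genuine morphism, and the hypothesis that it is constant means that $\varphi$ maps $M$ into a single fiber $F:=(\pi_p|_{M'})^{-1}(\Lambda)$ for some $\Lambda\in\DA_{n-1}(r)$. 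By Example \ref{ex:2raz}(1), the fiber of $\pi_p$ over $\Lambda$ in $\DA_n(r)$ is an affine space $\mathbb A^r$; therefore $F$ is a locally closed subvariety of an affine space, in particular a quasi-affine variety. Then I would invoke the standard fact that a morphism from a connected projective variety ($M$, which we may assume connected, treating components separately) to a quasi-affine variety is constant: the image is a connected projective, hence complete, subvariety of an affine space, so it is a point. This forces $\varphi$ to be constant. Part (2) is identical, replacing $\Sigma_p$ by $\Sigma_H$, $\pi_p$ by $\pi_H$, and the fiber $\mathbb A^r$ by the fiber $\mathbb A^{n-r+1}$ of Example \ref{ex:2raz}(2).

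The key steps, in order: (i) note that $M'\cap\Sigma_p=\emptyset$ (resp. $M'\cap\Sigma_H=\emptyset$) makes $\pi_p$ (resp. $\pi_H$) a morphism on $M'$, so the composition with $\varphi$ is a morphism on $M$; (ii) constancy of that composition means $\varphi(M)$ lies in one fiber $F$; (iii) identify $F$ as a subvariety of the affine-space fiber described in Example \ref{ex:2raz}, hence quasi-affine; (iv) conclude by properness/completeness that a morphism from the projective $M$ into $F$ is constant, so $\varphi$ is constant.

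I do not expect a serious obstacle here; the only point requiring a little care is step (i), namely that the indeterminacy locus of $\pi_p$ is \emph{exactly} $\Sigma_p$ and not something larger, so that $M'$ avoiding $\Sigma_p$ genuinely suffices. This is already built into the description in Example \ref{ex:2raz}: outside $\Sigma_p$ every $(r-1)$-plane $\Lambda$ has $\pi_p(\Lambda)$ an $(r-1)$-plane in $H$, depending algebraically on $\Lambda$, so $\pi_p$ is regular there. A secondary minor point is reducing to $M$ connected, which is harmless since a morphism is constant iff it is constant on each connected component. Everything else is the elementary fact that complete subvarieties of affine (or quasi-affine) varieties are finite, hence, being connected, points.
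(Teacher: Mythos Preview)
Your proposal is correct and is exactly the argument the paper intends: the paper does not write out a separate proof but simply prefaces the lemma with ``The fact that the fibers of the above maps are affine provides the following,'' which is precisely your steps (i)--(iv). You have just made explicit the standard point that a projective variety mapping into an affine fiber must have constant image.
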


We are now ready to prove the main result of the section:

\begin{proof}[Proof of Theorem \ref{teo:maps2b}] By Remark \ref{rem:maps2b} we can consider only morphisms $M \to \cD(r)$ to varieties of Picard number one and, by Corollary \ref{cor:nosurj1}, we can assume that $r \ge 2$.\par
\medskip
\noindent {\bf Case 1. } $\cD$ of type $\DA$.
\par\medskip
Let $n$ be the minimum integer such that there exists a nonconstant morphism $\varphi:M \twoheadrightarrow M' \subseteq \DA_n(r)$ for some $r$.
We will show that $n=\ed(\DA_n(r))\geq \ed(M)$. Assume by contradiction that $n < \ed(M)$.


Let $p \in \PP^n$ be a general point, $H \subset \PP^n$ a general hyperplane, and define $\Sigma_p$, $\Sigma_H$ as in Example \ref{ex:2raz}. Then clearly $[\Sigma_p] \cdot [\Sigma_H]=0$. The variety $\Sigma_p$ is isomorphic to
$\DA_{n-1}(r-1)$, so it has codimension $n+1-r$ in $\DA_n(r)$ while
$\Sigma_H$ is isomorphic to $\DA_{n-1}(r)$, so it has codimension $r$ in $\DA_n(r)$.
Therefore $\codim \Sigma_p + \codim \Sigma_H = n+1 \le \ed(M)$. 
 By Lemma \ref{lem:nosurj} we then have that either $[M']\cdot [\Sigma_p]=0$ or $[M']\cdot [\Sigma_H]=0$.

In the first case,  $\pi_p \circ \varphi$ is a morphism from $M$ to $A_{n-1}(r)$,
which is constant by our choice of $n$;  we can then apply Lemma \ref{lem:2raz} to get that $\varphi$ is constant.
In the second case we consider  $\pi_H \circ \varphi$ and use a similar argument.\par
\medskip
\noindent {\bf Case 2. } $\cD=\DB_n, \DC_n$ or $\DD_n$.
\par\medskip

Let $\cD(1) \subseteq \PP^N$ be  the minimal embedding of $\cD(1)$ and $p \in \cD(1)$ a general point, $H \subset \PP^N$ a general hyperplane and $\Sigma_p$, $\Sigma_H$ the subvarieties of $\cD(r)$ parametrizing $(r-1)$-dimensional linear spaces in $\cD(1)$ passing through $p$ or, respectively, contained in $H$.  Let $\varphi:M \twoheadrightarrow M' \subseteq \cD(r)$ be a nonconstant morphism. 

Clearly $[\Sigma_p] \cdot [\Sigma_H]=0$ and in all cases but $\cD=\DD_n, r=n-1,n$ we have (cf. Examples \ref{ex:bnr} and \ref{ex:dnr})
\[\codim \Sigma_p + \codim \Sigma_H =\ed(\cD(r)) +1 \le \ed(M).\]

By Lemma \ref{lem:nosurj} the intersection of  $[M']$ with $[\Sigma_p]$ or $[\Sigma_H]$ is zero.
 In the cases $\cD=\DD_n, r=n-1,n$ we have $[\Sigma_p] \cdot [\Sigma_p]=0$ and $\codim \Sigma_p + \codim \Sigma_p =\ed(\cD(r)) +1 \le \ed(M)$, hence, by Lemma \ref{lem:nosurj} we have $[M']\cdot [\Sigma_p]=0$. 

If $[M']\cdot [\Sigma_p]=0$ we consider the restriction to $M' \subset \cD(r) \subset \DA_N(r)$ 
of the morphism $\pi_p$
defined in Example \ref{ex:2raz}; composing with $\varphi$ we get a morphism $M \to \DA_{N-1}(r)$, which is constant by Case 1. Then $\varphi$ is constant by Lemma \ref{lem:2raz}.

If $[M']\cdot [\Sigma_H]=0$ we consider  $\pi_H \circ \varphi : M \to \DA_{N-1}(r-1)$, and we conclude by a similar argument.
\end{proof}

\section*{Acknowledgments}
The authors would like to thank two anonymous referees for their valuable comments, which helped improving the exposition of this paper.

\subsection*{Conflict of interest}

The authors declare no potential conflict of interests.

\appendix

\section{Computations and codes}

As we have seen in the previous sections, our arguments require some computations of effective good divisibility and \edps for low rank groups. These computations can be done by using some built-in commands of the {\tt SageMath} software (cf. \cite{sagemath}). 
We have included in this appendix the scripts that we have used for our purposes.

\subsection{Divisibility in a fixed degree}\label{ssec:div}

We choose a Weyl group with Dynkin diagram $\cD$ and a degree $d$, and we check if a complete flag manifold of type $\cD$ has $\ed$ up to degree $d$. 
For instance, in the case $\DD_4$ and $d= \cox(\DD_4)-1=5$ to check that $\ed(\DD_4(\Delta)) \ge 5$, we set

{\footnotesize
\begin{lstlisting}
W = WeylGroup("D4", prefix="r"); d=5
\end{lstlisting}
}

We define a function which computes the list of elements of a given length.
{\footnotesize
\begin{lstlisting}
def lists(u) : 
    l1=[]
    for w in W.elements_of_length(u):
        l1.append(w.reduced_word())
    return l1 
\end{lstlisting}
}

We define the longest element, its length (which is the dimension of the flag manifold $G/B$)
and we compute two lists of words: the list of words $v$ of length $\ell(v) \le e:=\lfloor d/2 \rfloor$,
and the list of words $u$ of length $\ell_0-c(u)$ such that $e \le c(u) <d$.

{\footnotesize
\begin{lstlisting}
w0=W.long_element(); l0=len(w0.reduced_word())
e =floor(d/2)

words=[0]; upwords=[0]
for i in [1..e]:
    words.append([])
for i in [1..(d-1)]:
    upwords.append([])     

for i in [1..e]:
    words[i]=lists(i)
for i in [e..(d-1)]:
    upwords[i]=lists(l0-i)
\end{lstlisting}
}

We verify the condition  $v \le u$ for every pair of words such that $\ell(v)+c(u) = d$ such that $0<\ell(v) \le c(u)$.

\begin{lstlisting}
bad=0
for i in [1..e]:     
    for w1 in words[i]:   
        for w2 in upwords[d-i]:
            v=W.from_reduced_word(w1) 
            u=W.from_reduced_word(w2)
            if v.bruhat_le(u) == False:
                bad =bad +1
if bad == 0:
    print W.cartan_type(), "has e.d. greater or equal than", d
if bad > 0:
    print W.cartan_type(), "has e.d. smaller than", d 
\end{lstlisting}

For instance, in case $\DD_4$, checking the degrees $d=5$ and $d=6$ we find that $\ed(\DD_4(\Delta))= 5$.

\subsection{Md-pairs}\label{ssec:ed}

The following code finds the pairs of words such that $\ell(v)+c(u) = d$ such that $0<\ell(v) \le c(u)$ and  $v \not \le u$; the list is empty if $d \le \ed(\cD(\Delta))$ and gives the whole list of \edps if $d = \ed(\cD(\Delta))+1$.

{\footnotesize
\begin{lstlisting}
c=1
for i in [1..e]:
    for w1 in words[i]:
        for w2 in upwords[d-i]:
            v=W.from_reduced_word(w1) 
            u=W.from_reduced_word(w2)
            if v.bruhat_le(u) == False:
                print c,")l(v)=",i,"c(u)=", d-i," v=", w1, "u=", w2 
                c= c+1
\end{lstlisting}
}

For $\DD_4$ in degree $d=6$ we find:

{\footnotesize
\begin{lstlisting}
1)   l(v)= 3  c(u)= 3  v=[1, 2, 3]  u=[4, 2, 3, 1, 2, 4, 1, 2, 1]
2)   l(v)= 3  c(u)= 3  v=[1, 2, 4]  u=[3, 2, 4, 1, 2, 3, 1, 2, 1]
3)   l(v)= 3  c(u)= 3  v=[3, 2, 1]  u=[4, 2, 3, 1, 2, 4, 2, 3, 2]
4)   l(v)= 3  c(u)= 3  v=[3, 2, 4]  u=[1, 2, 4, 1, 2, 3, 1, 2, 1]
5)   l(v)= 3  c(u)= 3  v=[4, 2, 1]  u=[2, 3, 1, 2, 4, 1, 2, 3, 2]
6)   l(v)= 3  c(u)= 3  v=[4, 2, 3]  u=[2, 3, 1, 2, 4, 3, 1, 2, 1]
\end{lstlisting}
}

For $\DD_5$ in degree $d=8$ we find:

\begin{lstlisting}
1) l(v)=4 c(u)=4 v=[1,2,3,4] u= [4,3,5,2,3,4,1,2,3,5,1,2,3,1,2,1]
2) l(v)=4 c(u)=4 v=[1,2,3,5] u= [5,3,4,2,3,5,1,2,3,4,1,2,3,1,2,1]
3) l(v)=4 c(u)=4 v=[4,3,2,1] u= [3,5,2,3,4,1,2,3,5,1,2,3,4,2,3,2]
4) l(v)=4 c(u)=4 v=[5,3,2,1] u= [4,3,5,2,3,4,1,2,3,5,2,3,4,2,3,2]
\end{lstlisting}

\subsection{Pullbacks}\label{ssec:pull}

We define the function {\tt dec}, which computes the decomposition of an element $w \in W$ as $w^Jw_J$, where $J \subset \Delta$. This can be used to check if a Schubert cycle $X_w \subset \cD(\Delta)$ is the pullback of a Schubert cycle  $X_{w^J}\subset \cD(J)$ (see Remark \ref{rem:declong}). 
 
\begin{lstlisting}
def desc(w,J) :
    for i in J:
        s=[i]
        u = (W.from_reduced_word(w))*(W.from_reduced_word(s))
        if len(u.reduced_word())<len(w):
            return u.reduced_word(),i    
    return w,0  
    
def dec(w,J) :
    p=[0]; pj=[0]; 
    p.append(desc(w,J)[0])
    pj.append(desc(w,J)[1])
    for i in [1..l0+1]: 
        if pj[i]==0:
            return  p[i], pj[1:i]
        p.append(desc(p[i],J)[0])
        pj.append(desc(p[i],J)[1])
\end{lstlisting} 

Using this function we can check that, for the six \edps listed for $\DD_4$,  cases  3) and 5)  are pullbacks from $\DD_4(1)$, cases 1) and 6) are pullbacks from  $\DD_4(3)$, and cases 2) and 4) are pullbacks from $\DD_4(4)$.

We can also check that, in case $\DD_5$, the pairs 3) and 4) are pullbacks from $\DD_5(1)$, while cases 1) and 2) satisfy the properties listed in Proposition \ref{prop:d2}. For instance, applying {\tt dec} for  $J=\{2,3,4,5\}$ to the reduced expression $u=[4, 3, 5, 2, 3, 4, 1, 2,$ $ 3, 5, 1, 2, 3, 1, 2, 1]$ we get
$u^J=[2, 3, 5, 4, 3, 2, 1]$ and  $u_J=[2, 3, 2, 5, 3, 2, 4, 3, 5]$.
A further application of {\tt dec} for  $K=\{3,4,5\}$ to $u_J$ shows that $u_J$ is a pullback from the first node of the diagram $\DD_4$ obtained removing the node $1$.

\end{document}